\documentclass[11pt]{amsart}
\usepackage{amssymb,amscd,  latexsym, graphicx, mathrsfs, enumerate}

\setlength{\textwidth}{460pt} \setlength{\hoffset}{-45pt}

\newcommand{\nc}{\newcommand}
\numberwithin{equation}{section}
\newtheorem{theorem}{Theorem}[section]
\newtheorem{prop}[theorem]{Proposition}
\newtheorem{importnota}[theorem]{Important Notation}
\newtheorem{prblm}[theorem]{Problem}
\newtheorem{notation}[theorem]{Notation}
\newtheorem{caution}[theorem]{Caution}
\newtheorem{remark}[theorem]{Remark}
\newtheorem{lemma}[theorem]{Lemma}
\newtheorem{construction}[theorem]{Construction}
\newtheorem{corollary}[theorem]{Corollary}
\newtheorem{example}[theorem]{Example}
\newtheorem{conclusion}[theorem]{Conclusion}
\newtheorem{triviality}[theorem]{Triviality}
\newtheorem{proto}[theorem]{Prototype Quasifibration}
\newtheorem{cauex}[theorem]{Cautionary Example}
\newtheorem{propositiondef}[theorem]{Proposition-Definition}
\newtheorem{subth}{Nuisance}[theorem]
\newtheorem{ssubth}{ }[subth]
\newtheorem{conjecture}[theorem]{Conjecture}
\newtheorem{sidest}[theorem]{Side Story}
\newtheorem{miniexample}[theorem]{Example}
\theoremstyle{definition}
\newtheorem{defin}[theorem]{Definition}

\nc\tri[1]{\begin{triviality}}
\nc\side[1]{\begin{sidest}}
\nc\conj[1]{\begin{conjecture}}
\nc\prodef[1]{\begin{propositiondef}}
\nc\prt[1]{\begin{proto}}
\nc\lem[1]{\begin{lemma}}
\nc\sblm[1]{\begin{sublemma}}
\nc\pro[1]{\begin{prop}}
\nc\thm[1]{\begin{theorem}}
\nc\cor[1]{\begin{corollary}}
\nc\dfn[1]{\begin{defin}}
\nc\sthm[1]{\begin{subth}}
\nc\exm[1]{\begin{example}}
\nc\miniexm[1]{\begin{miniexample}}
\nc\plm[1]{\begin{prblm}}
\nc\rmk[1]{\begin{remark}}
\nc\subrmk[1]{\begin{subremark}}
\nc\ntn[1]{\begin{notation}}
\nc\cau[1]{\begin{caution}}
\nc\imn[1]{\begin{importnota}}
\nc\cax[1]{\begin{cauex}}
\nc\con[1]{\begin{construction}}
\nc\ssthm[1]{\begin{ssubth}}
\nc\cnc[1]{\begin{conclusion}}
\nc\elem{\end{lemma}}
\nc\esblm{\end{sublemma}}
\nc\eside{\end{sidest}}
\nc\econj{\end{conjecture}}
\nc\eprodef{\end{propositiondef}}
\nc\eprt{\end{proto}}
\nc\ethm{\end{theorem}}
\nc\ecor{\end{corollary}}
\nc\edfn{\end{defin}}
\nc\esthm{\end{subth}}
\nc\epro{\end{prop}}
\nc\etri{\end{triviality}}
\nc\eexm{\end{example}}
\nc\eminiexm{\end{miniexample}}
\nc\ermk{\end{remark}}
\nc\subermk{\end{subremark}}
\nc\eplm{\end{prblm}}
\nc\ecau{\end{caution}}
\nc\ecax{\end{cauex}}
\nc\eimn{\end{importnota}}
\nc\entn{\end{notation}}
\nc\econ{\end{construction}}
\nc\ecnc{\end{conclusion}}
\nc\essthm{\end{ssubth}}

\newcommand{\C}{\mathbb{C}}
\newcommand{\R}{\mathbb{R}}
\newcommand{\Q}{\mathbb{Q}}

\newcommand{\Z}{\mathbb{Z}}

\newcommand{\X}{\mathfrak{X}}

\newcommand{\A}{\mathbb{A}}

\newcommand{\p}{\mathfrak{p}}

\newcommand{\diag}{{\rm diag}}
\newcommand{\G}{\Gamma}

\newcommand{\ds}{\displaystyle}

\newcommand{\lra}{\longrightarrow}

\newcommand{\sgn}{{\rm sgn}}

\renewcommand{\Bbb}{\mathbb}
\newcommand{\f}{\textbf{f}}
\newcommand{\bG}{\bold G}
\newcommand{\bs}{\backslash}

\newcommand{\jt}{\frak{J}_2}

\title[Higher level cusp forms on the exceptional group of type $E_{7}$]
{Higher level cusp forms on exceptional group of type $E_{7}$}
\author{Henry H. Kim and Takuya Yamauchi}
%\date{\today}
\keywords{exceptional group of type $E_7$, Ikeda lift, Eisenstein series, Langlands functoriality}
\thanks{The first author is partially supported by NSERC. The second author is partially supported by 
 JSPS KAKENHI Grant Number (C) No.15K04787.}
\subjclass[2010]{}
\address{Henry H. Kim \\
Department of mathematics \\
 University of Toronto \\
Toronto, Ontario M5S 2E4, CANADA \\
and Korea Institute for Advanced Study, Seoul, KOREA}
\email{henrykim@math.toronto.edu}

\address{Takuya Yamauchi \\
Mathematical Inst. Tohoku Univ.\\
 6-3,Aoba, Aramaki, Aoba-Ku, Sendai 980-8578, JAPAN}
\email{yamauchi@math.tohoku.ac.jp}

\begin{document}
\begin{abstract} 
By using new techniques with the degenerate Whittaker functions found by Ikeda-Yamana, we construct higher level cusp form on $E_{7,3}^{\rm ad}$ (adjoint exceptional group of type $E_7$), called Ikeda type lift, from any Hecke cusp form whose corresponding 
automorphic representation has no supercupidal local components. This generalizes the results in \cite{KY} on level one forms.
But there are new phenomena in higher levels; first, we can handle any non-trivial central characters. Second,
the lift does not depends on the choice of an irreducible cuspidal constituent of the restriction of the Hecke cusp form to 
$SL_2$. Hence any twist of the cusp form gives rise to the same lift. We also compute the degree 133 adjoint $L$-function of the Ikeda type lift.
\end{abstract}
\maketitle
\tableofcontents

\section{Introduction}
Let $\bold{G}$ be  the adjoint exceptional algebraic group of type $E_{7,3}^{\rm ad}$ over $\Q$ and $\frak T\subset \C^{27}$ 
the corresponding bounded symmetric domain.  
In \cite{KY} we constructed holomorphic cusp forms of level one on $\frak T$ from cusp forms of level one for $SL_2$ over $\Q$ by using Ikeda's idea regarding 
a uniform structure of Siegel Eisenstein series and the theory of Jacobi forms of matrix index:   
In \cite{Ik1}, Ikeda originally gave a (functorial) construction of a Siegel cusp form for $Sp_{2n}$ (rank $2n$) from a normalized Hecke eigenform on the upper half-plane $\mathbb{H}$ with respect to $SL_2(\Z)$. 
After this work, his construction was generalized to unitary groups $U(n,n)$ \cite{Ik2}, $O^*(4n)$ \cite{Yamana}, 
quaternion unitary groups $Sp(n,n)$ \cite{IY}, symplectic groups $Sp_{2n}$ over any totally real field \cite{Ik4, Ik&H}, and $E_{7,3}^{\rm ad}$ by the authors as mentioned above.  

Recently, in \cite{IY, yamana1}, Ikeda and Yamana found a method, though the original idea came from Ikeda's paper \cite{Ik4}, of using the degenerate Whittaker functions and local Fourier-Jacobi expansions which 
make the proof not only simpler but also applicable to elliptic cusp forms for higher levels. 
We apply their method to $E_{7,3}$ and generalize 
the results in \cite{KY} to elliptic cusp forms for higher levels. Due to a special feature of $E_{7,3}$, our construction works
even for cusp forms with any non-trivial central characters.    

We now explain our main theorem. We refer to \cite{KY} for the several notations which appear below. Let $\psi_p:\Q_p\lra \C^\times$ be a standard non-trivial additive character for each prime $p$ and put $\psi_\f=\otimes'_{p<\infty}\psi_p$. 

Let $f=\sum_{n=1}^\infty a_f(n)q^n$ be an elliptic newform of weight $k$ with the central character $\chi$ so that 
the corresponding automorphic representation $\pi'=\otimes'_p \pi_p'=\pi_{\bf{f}}'\otimes \pi_{\infty}'$ of $GL_2(\A_\Q)$ has no supercupidal local components. Then $f$ is a non-CM form.
For each prime $p$, $\pi_p'=\pi(\mu_{1p},\mu_{2p})$, $\mu_{1p}\mu_{2p}=\chi_p$, or ${\rm St}_{GL_2}\otimes\omega_p$, 
$\omega_p^2=\chi_p$. Here
$\mu_{ip}: \Bbb Q_p^\times\rightarrow\Bbb C$ are (unitary) characters, which are both unramified for almost all $p$, and ${\rm St}_{GL_2}$ is the Steinberg representation which is the subrepresentation of 
$\pi(|\cdot|^{\frac 12}, |\cdot|^{-\frac 12})$. Let $\mu_p:=\mu_{1p}\mu_{2p}^{-1}$.
Let $S_{\bf{f}}$ be the set of finite places so that $\pi_p'={\rm St}_{GL_2}\otimes\omega_p$.

Let $\pi$ be any irreducible cuspidal constituent of the restriction of $\pi'$ to $SL_2(\Bbb A_\Q)$, and let $\pi=\pi_{\bf{f}}\otimes\pi_\infty$. Then for $p\notin S_f\cup \{\infty\}$, $\pi_p$ is any irreducible direct summand of $I_1(0,\mu_p)={\rm Ind}^{SL_2(\Q_p)}_{B(\Q_p)}\mu_p$. (Here if $\mu_p$ is a quadratic character, $I_1(0,\mu_p)$ is a sum of two irreducible representations.
Since $f$ is non-CM, by \cite{LL}, any choice of irreducible direct summands gives rise to a cuspidal representation of $SL_2$.
It turns out that our lift does not depend on the choice. Hence for any gr\"ossencharacter $\chi$, $\pi'\otimes\chi$ will give rise to the same lift.)

Here $B=TN_1$ is the Borel subgroup in $SL_2$. 
For $p\in S_{\bf{f}}$, $\pi_p=A_1(|\cdot|_p)$ is the unique submodule of $I_1(|\cdot|_p)={\rm Ind}^{SL_2(\Q_p)}_{B(\Q_p)} |\cdot|_p$. 
Let $n\in \Z_{>0}$.
For $p\notin S_{\bf{f}}\cup \{\infty\}$ and $\phi_p\in \pi_p$, define
$$w^{\mu_p}_n(\phi_p)=|n|^{\frac{1}{2}}_p L(1,\mu_p)\int_{N_1(\Q_p)}\phi_p\Big(
\left(\begin{array}{cc}
0 & 1 \\
-1 & 0 
\end{array}
\right)
\left(\begin{array}{cc}
1 & z \\
0 & 1 
\end{array}
\right) \Big)\overline{\psi(nz)}dz,
$$
where $L(s,\mu_p):=(1-\mu_p(p)p^{-s})^{-1}$ if $\mu_p$ is unramified. If $\mu_p$ is ramified, we let $L(s,\mu_p)=1$. If $p\in S_{\bf{f}}$, 
we can define $w^{|\cdot|_p}_n(\phi_p)$ as the restriction of the same integral to $A_1(|\cdot|_p)$. 
(replace $L(1,\mu_p)$ by $\zeta_p(2)=(1-p^{-2})^{-1}$.) 
Now we define a constant $c_n(f)$, which should not be confused with $a_f(n)$, by 
\begin{equation}\label{cn}
\int_{N_1(\A_\f)} \phi\Big(
\left(\begin{array}{cc}
1 & z \\
0 & 1 
\end{array}
\right)\Big)\overline{\psi_{\f}(nz)}dz=n^k\cdot c_n(f) \prod_{p|n\atop p\notin S_{\bf{f}}} w^{\mu_p}_n(\phi_p) \prod_{p|n\atop p\in S_{\bf{f}}} w^{|\cdot|_p}_n(\phi_p),
\end{equation}
for all distinguished vector $\phi=\otimes_p \phi_p\in \pi_{\bf{f}}$. 
Given an elliptic newform $f$, the constant $c_n(f)$ always exists and is uniquely 
determined by $f$ and its realization as an automorphic form on $SL_2(\A_\Q)$.
%Note that $c_n(f)$ depends on the choice of $\mu_{1,p}$ and $\mu_{2,p}$ since so does $\prod_{p|n\atop p\notin S_f} w^{\mu_p}_n(h_p) \prod_{p|n\atop p\in S_f} w^{|\cdot|_p}_n(h_p)$ but the product of these two numbers is independent of that choice.   

We define similar functionals for $\bold G$. Let $\bold P=\bold M\bold N$ be the Siegel parabolic subgroup such that  
$\bold M\simeq GE_{6,2}$. We naturally identify $\bold N$ with the exceptional Jordan algebra $\frak J$. We write $p_B\in \bold N,\ B\in \frak J$ under 
this natural identification.    
Let us consider the degenerate principal series representation
$$I(s,\mu_p):={\rm Ind}^{\bold G(\Q_p)}_{\bold P(\Q_p)} \, (\mu_p\circ \nu) |\nu(\cdot)|^s
$$
associated to $\mu_p$, where $\nu:P\lra GL_1$ is the similitude character. Then since $\bold G$ is an adjoint group, by \cite{W},
$I(0,\mu_p)$ is irreducible. (If $\bold G$ is a simply connected $E_7$ and $\mu_p$ is a quadratic character, $I(0,\mu_p)$ is reducible. It is a sum of two irreducible representations.) 
If $\mu_p=1$,
$I(1,1)$ is reducible, and has a unique submodule 
$A(|\cdot|_p)$. 
We will prove that for any $B\in \frak J(\Q)_+$, one can associate a suitable vector $w^{\mu_p}_B$ in 
the one-dimensional space ${\rm Hom}_{N(\Q_p)}(I(0,\mu_p),\psi^B_p)$, or a suitable vector $w^{|\cdot|_p}_B$ in ${\rm Hom}_{N(\Q_p)}(A(|\cdot|_p),\psi^B_p)$, where $\psi^B_p(Z)=\psi_p((B,Z))$ for $Z\in \frak J(\Q_p)$. 
For $\phi_p\in I(0,\mu_p)$, it is explicitly given by 
$$w^{\mu_p}_B(\phi_p)=|\det(B)|^{\frac{9}{2}}_p\prod_{i=0}^2 L(9-4i,\mu_p)\int_{N(\Q_p)}\phi_p(\iota\cdot p_Z) \overline{\psi^B(Z)}dZ.
$$
Consider the admissible representation 
$$A_{\bf{f}}(\pi_{\bf{f}})=\bigotimes_{p\notin S_{\bf{f}}\cup \{\infty\}} I(0,\mu_p)\otimes\bigotimes_{p\in S_{\bf{f}}} A(|\cdot|_p),
$$
and $\Pi=\Pi_\infty\otimes A_{\bf{f}}(\pi_{\bf{f}})$, 
where $\Pi_\infty$ is the holomorphic discrete series of the lowest weight $k+8$. The choice of $\Pi_\infty$ is predicted by Lemma \ref{awc}. 
Then we have the main theorem:

\begin{theorem}\label{main-thm1} Let $k\geq 2$, and $S_k(\frak T)$ be the space of cusp forms on $\frak T$ of weight $k$. Let $\textbf{e}(x)=e^{2\pi x\sqrt{-1}}$.
Then $\Pi$ is a cuspidal automorphic representation of $E_7^{\rm ad}$. More explicitly, for $\phi\in A_{\bf{f}}(\pi_{\bf{f}})$,
the holomorphic function
\begin{equation}\label{classical}
I_k(\phi)(Z)=\sum_{B\in J_3(\Z)^+}c_{\det(B)}(f)\det(B)^{\frac{k}{2}+4} 
\Big(\prod_{p|\det(B)\atop p\notin S_{\bf{f}}} w^{\mu_p}_B(\phi_p) \prod_{p|\det(B)\atop p\in S_{\bf{f}}} w^{|\cdot|_p}_B(\phi_p)\Big)\textbf{e}((B,Z)),
\end{equation}
is a Hecke eigen cusp form in $S_{k+8}(\frak T)$, and gives rise to $\Pi$. We note that $I_k(\phi)(Z)$ is exactly (\ref{rel-imp1}) in classical language.
\end{theorem}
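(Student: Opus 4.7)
My plan is to follow the Ikeda--Yamana strategy of \cite{IY, yamana1}, transferred to $E_{7,3}^{\rm ad}$, whose Siegel parabolic $\bold P = \bold M \bold N$ has Levi $\bold M \simeq GE_{6,2}$ and abelian unipotent radical $\bold N$ naturally identified with $\frak J$. I would first realize $\Pi$ inside the space of automorphic forms on $\bold G(\A_\Q)$---via a residue or direct specialization of the degenerate principal series Eisenstein series induced from $(\mu\circ \nu)|\nu(\cdot)|^s$ on $\bold P$, together with the lowest-weight holomorphic discrete series $\Pi_\infty$ of weight $k+8$ pinned down by Lemma~\ref{awc}. The classical expression $I_k(\phi)(Z)$ is then read off as the Fourier expansion along $\bold N$ of a distinguished vector in this global realization, and the theorem reduces to verifying that the $B$-th Fourier coefficient matches the product in (\ref{classical}).

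The preliminary step is to verify that the integrals defining $w^{\mu_p}_B$ and $w^{|\cdot|_p}_B$ converge after analytic continuation in $s$ and span the one-dimensional spaces $\mathrm{Hom}_{N(\Q_p)}(I(0,\mu_p),\psi^B_p)$ and $\mathrm{Hom}_{N(\Q_p)}(A(|\cdot|_p),\psi^B_p)$; uniqueness uses the irreducibility of $I(0,\mu_p)$ from \cite{W} and the description of $A(|\cdot|_p)$ as the unique submodule of $I(1,1)$. The normalizations $|\det(B)|_p^{9/2}\prod_{i=0}^{2} L(9-4i,\mu_p)$ are chosen so that the functional takes value $1$ on the spherical vector, exactly parallel to the $|n|_p^{1/2}L(1,\mu_p)$ in the $SL_2$ case. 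Once these are in hand, factorization of the global Fourier coefficient at $B$ into a pure product of the $w^{\mu_p}_B(\phi_p)$ and $w^{|\cdot|_p}_B(\phi_p)$ is forced by multiplicity one of degenerate Whittaker models for the abelian radical $\bold N$.

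The crux is to show that the remaining global proportionality scalar is exactly $c_{\det(B)}(f)$. This is a prime-by-prime matching of the local degenerate Whittaker data on $E_{7,3}^{\rm ad}$ with the $SL_2$-Whittaker data of $\pi_{\bf{f}}$: concretely, the Jacobi group inside $\bold P$ contains an $SL_2$ along which the local Fourier--Jacobi expansion of $\Pi_p$ reduces to $\pi_p$, so that the defining identity (\ref{cn}) of $c_n(f)$ records precisely the global-to-local proportionality one needs. The powers $|\det(B)|_p^{9/2}$ on the $E_{7,3}$ side combine with the $|n|_p^{1/2}$ on the $SL_2$ side, for $n=\det(B)$, to give the expected classical exponent $\det(B)^{k/2+4}$, while the archimedean lowest-weight vector contributes $\textbf{e}((B,Z))$. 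I expect the main obstacle to be the matching at the Steinberg places $p\in S_{\bf{f}}$, where one must descend simultaneously from the degenerate principal series to its submodule $A(|\cdot|_p)$ and from $\pi'_p = \mathrm{St}_{GL_2}\otimes \omega_p$ to its $SL_2$-constituent $A_1(|\cdot|_p)$, and where the $L$-factor normalizations must be replaced by $\zeta_p(2)$ on the $SL_2$ side and by the corresponding $E_{7,3}$-analogue on the $\bold G$ side in a fully compatible way. Once this matching is established, cuspidality is automatic because the Fourier expansion (\ref{classical}) is supported on positive-definite $B$, the Hecke eigenform property follows from the one-dimensional sphericality of each local factor of $A_{\bf{f}}(\pi_{\bf{f}})$, and independence from the choice of irreducible $SL_2$-constituent when $\mu_p$ is quadratic follows from the $L$-packet principle of \cite{LL}, since all constituents share the same $\mu_p$ and hence determine the same $I(0,\mu_p)$.
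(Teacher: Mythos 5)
Your first step is where the argument breaks: you propose to ``realize $\Pi$ inside the space of automorphic forms on $\bold G(\A_\Q)$ via a residue or direct specialization of the degenerate principal series Eisenstein series,'' and then to read off $I_k(\phi)(Z)$ as the Fourier expansion of a distinguished vector. But $\Pi$ is cuspidal, so it cannot be a residue or value of the Siegel Eisenstein series (those land in the Eisenstein/residual spectrum), and its finite part $A_{\bf{f}}(\pi_{\bf{f}})$ depends on the Satake parameters of $f$ in a way no single Eisenstein family produces. The Eisenstein series enters Ikeda's philosophy only through the shape of its Fourier coefficients (Siegel series); the lift itself is a \emph{new} series whose membership in the automorphic spectrum is exactly the content of the theorem. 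By assuming an automorphic realization of $\Pi$ from the start, you assume the conclusion, and the rest of your argument (multiplicity one of the degenerate Whittaker model pinning down the scalar $c_{\det(B)}(f)$, matching at the Steinberg places, cuspidality from positive-definite support) only computes features of an object whose existence you have not established.

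The missing mechanism is the one the paper actually runs, and your Fourier--Jacobi ingredients are used there but in the reverse logical direction. One \emph{defines} the adelic series $\mathcal{F}(g;\phi)=\sum_{B>0} c_{\det(B)}(f)\, w^{\mu_\f}_B(g_\f\cdot\phi)W^{(k+8)}_B(g_\infty)$ directly, proves absolute convergence from the Siegel-series estimate (Lemma \ref{fc2}) and the bound $|c_n(f)|\le m^{-(k-1)/2}$ (Proposition \ref{estct}), and notes it is left $\bold P(\Q)$-invariant by the consistency relation (\ref{cB-rel}) coming from (\ref{rel-imp}) (Lemma \ref{rel-fc}, Proposition \ref{whi-glo}). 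Automorphy is then \emph{proved}, not assumed: the local computations (Lemma \ref{com1} at finite places, Lemma \ref{awc} at infinity) identify each global Fourier--Jacobi coefficient $(\mathcal{F})^S_\Phi$ with a series on $SL_2(\A_\Q)$ whose coefficients are $c_{\det(S)r}$, and Proposition \ref{dist} shows this series is a genuine cuspidal automorphic form built from translates of $\phi_f$; since $\bold G(\Q)=\bold H(\Q)\bold P(\Q)$, Lemma \ref{important1}(2) upgrades the resulting $SL_2(\Q)$-invariance of all Fourier--Jacobi coefficients of all right translates to $\bold G(\Q)$-invariance of $\mathcal{F}$. This transfer of automorphy from $SL_2$ to $E_{7,3}^{\rm ad}$ is the crux of the Ikeda--Yamana method and is absent from your proposal; without it neither the automorphy nor the cuspidality nor the claim that the series ``gives rise to $\Pi$'' can be obtained.
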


Here an important point is that we can write down the Ikeda type lift quite explicitly. 
We use adelic languages to prove Theorem \ref{main-thm1}. 

We denote by $S_k(\Bbb H)$ the space of cusp forms on the upper half plane $\Bbb H$ of weight $k$. 
We also denote by $S^{{\rm new}}_{k}(\mathbb{H})^{{\rm ns}}$ the space generated by all elliptic newforms
so that the corresponding automorphic representation 
$\pi'=\pi_\infty'\otimes\otimes_p' \pi_p'$ does not have supercuspidal local components.
Note that $A_{\bf{f}}(\pi_{\bf{f}})$ does not depend on the choice of an irreducible cuspidal constituent of the restriction of the cusp forms in $S^{{\rm new}}_{k}(\mathbb{H})^{{\rm ns}}$
to $SL_2(\Bbb A_\Q)$. 
Hence we have the following map, which is not injective; if $f\in S^{{\rm new}}_{k}(\mathbb{H})^{{\rm ns}}$, then
for any gr\"ossencharacter $\chi$, $f\otimes\chi$ will give rise to the same lift.

\begin{corollary}\label{cor1} For any integer $k\ge 2$, there exists a $\C$-linear map  
$$S^{{\rm new}}_{k}(\mathbb{H})^{{\rm ns}}\lra S_{k+8}(\frak T)
$$
which is functorial in the sense that it preserves Hecke eigen forms. 
\end{corollary}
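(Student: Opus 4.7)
The plan is to define the map on a basis of Hecke eigen newforms and extend $\C$-linearly, using Theorem \ref{main-thm1} to supply the image of each basis element.

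First, I fix a basis $\{f_i\}$ of $S^{\rm new}_{k}(\mathbb{H})^{\rm ns}$ consisting of normalized Hecke eigen newforms. For each $f_i$, let $\pi'_i$ be the associated cuspidal automorphic representation of $GL_2(\A_\Q)$, and form the admissible representation $A_{\bf{f}}(\pi_{i,\bf{f}})$ of $\bold G(\A_\f)$ as in Theorem \ref{main-thm1}. Two invariances are crucial at this stage: (i) $A_{\bf{f}}(\pi_{i,\bf{f}})$ depends only on $\pi'_i$, because at every $p\notin S_{\bf{f}}$ the representation $I(0,\mu_p)$ is irreducible on $\bold G(\Q_p)$ by \cite{W}, even when $I_1(0,\mu_p)$ on $SL_2(\Q_p)$ is reducible; hence the ambiguity in the choice of a cuspidal constituent of $\pi'_i|_{SL_2(\A_\Q)}$ disappears after restriction-induction to $\bold G$. (ii) $A_{\bf{f}}(\pi_{i,\bf{f}})$ is invariant under twisting $\pi'_i$ by a gr\"ossencharacter $\chi$, because twisting leaves the ratio $\mu_p=\mu_{1p}\mu_{2p}^{-1}$ and the Steinberg datum in $S_{\bf{f}}$ unchanged.

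Next, for each $i$ I pick a canonical distinguished vector $\phi_i=\otimes_p \phi_{i,p}\in A_{\bf{f}}(\pi_{i,\bf{f}})$: at unramified primes I take the normalized spherical vector, and at ramified primes a fixed new-type vector determined by the conductor data of $\pi'_{i,p}$. Theorem \ref{main-thm1} then produces a Hecke eigen cusp form $I_k(\phi_i)\in S_{k+8}(\frak T)$, whose Hecke eigenvalues at unramified primes are determined functorially by the local parameters of $A_{\bf{f}}(\pi_{i,\bf{f}})$, hence by those of $f_i$. I set $\mathcal{I}(f_i):=I_k(\phi_i)$ and extend $\C$-linearly to obtain the desired map on $S^{\rm new}_{k}(\mathbb{H})^{\rm ns}$.

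Linear extension is formal once the map is pinned down on a basis of Hecke eigen newforms, and the functoriality clause --- preservation of Hecke eigenforms with the predicted eigenvalues --- is immediate from Theorem \ref{main-thm1}. The main obstacle is therefore not the linear extension but the well-posedness of the single assignment $f_i\mapsto I_k(\phi_i)$: this rests precisely on the invariances (i) and (ii) together with the canonical choice of the vector $\phi_i$. All of these ingredients are available by the time Theorem \ref{main-thm1} has been proved, so the corollary follows. The non-injectivity noted in the statement is a direct consequence of (ii): any two twists of a fixed newform have the same image under $\mathcal{I}$.
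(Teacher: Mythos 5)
Your proposal is correct and follows essentially the same route as the paper: one chooses for each Hecke eigen newform a Hecke eigen (spherical at unramified primes) test vector $\phi$, applies Theorem \ref{main-thm1} to get a Hecke eigen cusp form in $S_{k+8}(\frak T)$, and extends $\C$-linearly, with the independence of the choice of cuspidal constituent of $\pi'|_{SL_2(\A_\Q)}$ (irreducibility of $I(0,\mu_p)$ on the adjoint group) guaranteeing well-posedness exactly as the paper notes. The twist-invariance remark matches the paper's explanation of the map's non-injectivity.
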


However, in some cases, we have an injective map:
For a positive integer $N$, define 
$\Gamma^{\bold{G}}_0(N)=\{g\in \Gamma\ |\ (g\ {\rm mod}\ N)\in {\bold P}(\Z/N\Z) \}$, where 
$\bold P$ is the Siegel parabolic subgroup in $\bold G$. 
For a positive integer $k\ge 10$, let $S_k(\Gamma^{\bold G}_0(N))$ be the space of 
cusp forms 
weight $k$ with respect to $\Gamma^{\bold G}_0(N)$ on $\frak T$. 

As usual, let $\Gamma_0(N)=\Bigg\{
\left(
\begin{array}{cc}
a & b \\
c & d
\end{array}
\right)\in SL_2(\Z)\ \Bigg|\ c\equiv 0\  {\rm mod}\ N 
\Bigg\}$.
For $k\geq 2$, let $S_k(\Gamma_0(N))^{{\rm new}}$ be the space of elliptic newforms of weight $k$ with respect to $\Gamma_0(N)$ with the trivial central character. For $f\in S_k(\Gamma_0(N))^{{\rm new}}$,
let $\pi'$ be the cuspidal representation of $GL_2(\Bbb A_\Q)$ associated to $f$.
Suppose $N$ is square free.
Then it is well-known \cite{LW} that if $p|N$,
$\pi_p'$ is the Steinberg representation, or the unramified twist of the Steinberg representation. 
It is determined by the  Atkin-Lehner eigenvalue: Let $W_p=\begin{pmatrix} 0&1\\p&0\end{pmatrix}$. Then for $f\in S_k(\Gamma_0(N))^{{\rm new}}$, $W_pf=\epsilon_p f$ with $\epsilon_p=\pm 1$. If $\epsilon_p=-1$, then $\pi_p={\rm St}_{GL_2}$. If $\epsilon_p=1$, then $\pi_p={\rm St}_{GL_2}\otimes\xi$, where $\xi$ is the unique non-trivial unramified quadratic character.
Let $S_k(\Gamma_0(N))^{{\rm new},-}$ be the space of new forms so that $\epsilon_p=-1$ for all $p|N$.

Suppose $\pi_1'$ and $\pi_2'$ are two cuspidal representations attached to two newforms in $S_k(\Gamma_0(N))^{{\rm new}}$, and 
irreducible cuspidal constituents of $\pi_1'|_{SL_2(\Bbb A_\Q)}$ and $\pi_2'|_{SL_2(\Bbb A_\Q)}$ give rise to the same cuspidal representation in $S_{k+8}(\frak T)$. Then $Ad(\pi_1')=Ad(\pi_2')$. By \cite[Theorem 4.1.2]{Ra}, $\pi_2'\simeq \pi_1'\otimes\omega$ for a gr\"ossencharacter $\omega$ with $\omega^2=1$. Since $N$ is square free, $\omega_p$ should be unramified for all $p$.
There is no such nontrivial gr\"ossencharacter. Hence $\omega=1$.
Therefore we have

\begin{theorem}\label{main-thm2} Let $N$ be square free. For any integer $k\ge 2$, there exists a $\C$-linear injective map  
$$S_{k}(\Gamma_0(N))^{{\rm new}} \lra S_{k+8}(\Gamma^{\bold G}_0(N))
$$
which is functorial. 
\end{theorem}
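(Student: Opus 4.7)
The plan is to combine Corollary \ref{cor1} with two further ingredients: a local calculation showing that the lift has level $\Gamma^{\bold G}_0(N)$, and the injectivity argument already sketched in the paragraph preceding the statement. The $\mathbb{C}$-linearity is inherited from Corollary \ref{cor1} once the image is known to land in $S_{k+8}(\Gamma^{\bold G}_0(N))$.

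Given $f\in S_k(\Gamma_0(N))^{\rm new}$ with associated cuspidal representation $\pi'$, squarefreeness of $N$ implies that each local $\pi_p'$ at $p|N$ is the Steinberg representation or its unramified quadratic twist (\cite{LW}), so $\pi'$ is non-supercuspidal at every finite place and Corollary \ref{cor1} produces a Hecke eigen cusp form $F_f=I_k(\phi)\in S_{k+8}(\frak T)$ generating $\Pi=\Pi_\infty\otimes A_{\bf f}(\pi_{\bf f})$. I would choose the adelic test vector $\phi=\otimes_p\phi_p\in A_{\bf f}(\pi_{\bf f})$ as follows. At $p\nmid N$ the character $\mu_p$ is unramified (trivial central character forces $\mu_p=\mu_{1p}^2$ to be unramified), so I take $\phi_p$ to be the $\bold G(\Z_p)$-spherical vector of the irreducible unramified representation $I(0,\mu_p)$. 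At $p|N$ the local component $A(|\cdot|_p)$ (or its unramified quadratic twist) is the unique irreducible submodule of $I(1,1)={\rm Ind}^{\bold G(\Q_p)}_{\bold P(\Q_p)}|\nu|$, and the key claim is that $A(|\cdot|_p)$ admits a nonzero vector $\phi_p^\circ$ fixed by the Siegel parahoric $K_p^0=\{g\in\bold G(\Z_p):g\bmod p\in\bold P(\F_p)\}$. Using the Iwasawa decomposition $\bold G(\Q_p)=\bold P(\Q_p)\bold G(\Z_p)$ together with the identification $\bold G(\Z_p)/K_p^0\simeq\bold G(\F_p)/\bold P(\F_p)$, one produces a canonical $K_p^0$-fixed line in $I(1,1)$; I would then verify that this line lies inside the submodule $A(|\cdot|_p)$ by evaluating the standard intertwining operator $M:I(1,1)\to I(-1,1)$ on it. With these local choices assembled, $F_f$ is invariant under $\prod_{p\nmid N}\bold G(\Z_p)\times\prod_{p|N}K_p^0$, that is, $F_f\in S_{k+8}(\Gamma^{\bold G}_0(N))$.

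For injectivity I would reproduce the argument sketched just before the statement. If $f_1,f_2\in S_k(\Gamma_0(N))^{\rm new}$ yield cusp forms generating the same automorphic representation $\Pi$ of $E_7^{\rm ad}$, then equality of Satake parameters at every $p\nmid N$ forces $L_p(s,\mathrm{Ad}(\pi_1'))=L_p(s,\mathrm{Ad}(\pi_2'))$ for almost all $p$, hence by strong multiplicity one on $GL_3$ (applicable because a non-CM newform has cuspidal adjoint lift) one has $\mathrm{Ad}(\pi_1')\simeq\mathrm{Ad}(\pi_2')$. Then \cite[Theorem 4.1.2]{Ra} gives $\pi_2'\simeq\pi_1'\otimes\omega$ with $\omega^2=1$, and the conductor equality $N(\pi_1')=N(\pi_2')=N$ combined with the squarefreeness of $N$ forces $\omega$ to be unramified at every finite place, hence trivial; thus $f_1=f_2$.

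The principal obstacle is the local level claim at primes $p|N$: that the $K_p^0$-fixed line of $I(1,1)$ for $E_7^{\rm ad}$ lies in the small submodule $A(|\cdot|_p)$ rather than being mapped isomorphically onto the Langlands quotient. Making this precise requires an explicit description of the parabolic-parahoric double cosets $\bold P(\Q_p)\backslash\bold G(\Q_p)/K_p^0$ and a direct computation of the standard intertwining operator on the corresponding $K_p^0$-invariants, generalizing the familiar $Sp_4$ phenomenon underlying the Saito-Kurokawa lift that the Siegel-parahoric vector of the degenerate principal series at $s=1$ falls into the unique irreducible submodule.
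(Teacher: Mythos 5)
Your overall strategy (spherical vectors at $p\nmid N$, a Siegel--parahoric--fixed vector at $p\mid N$, then the Ramakrishnan twisting argument for injectivity) is the same as the paper's, and your injectivity paragraph is essentially the paper's own sketch made slightly more explicit; that part is fine. The problem is the local level claim at $p\mid N$, which you yourself flag as ``the principal obstacle'' and then do not prove: the existence of a nonzero $K^{\bold P}_p$-fixed vector in the submodule $A(|\cdot|_p)$ is exactly the nontrivial content of the theorem beyond Corollary \ref{cor1}, so leaving it as a plan (``I would then verify\dots by evaluating the standard intertwining operator'') leaves a genuine gap. Moreover your intermediate assertion that the Iwasawa decomposition produces ``a canonical $K_p^0$-fixed line in $I(1,1)$'' is misleading: by Mackey theory the dimension of $I(1,1)^{K^{\bold P}_p}$ equals the number of double cosets $\bold P(\Q_p)\backslash\bold G(\Q_p)/K^{\bold P}_p$, which is $4$ (this is the decomposition from (1.6) of \cite{kim} that the paper itself cites), not $1$. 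So there is no distinguished line handed to you for free, and it is not automatic that the naive candidate (the function supported on the closed cell $\bold P(\Q_p)K^{\bold P}_p$) lies in $A(|\cdot|_p)$ rather than contributing to the Langlands quotient; deciding this is precisely the computation you defer.

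The paper closes this gap differently and more cheaply: it invokes Theorem 2.1 of \cite{MY} together with Frobenius reciprocity for Jacquet modules and Proposition \ref{irr-uni} to show directly that $A(|\cdot|_p)^{K^{\bold P}_p}\simeq\C$, i.e.\ the parahoric-fixed subspace of the small submodule itself is one-dimensional, which immediately yields the generator $\phi^{\rm st}_p$ without any intertwining-operator computation. If you want to salvage your route, you must actually carry out the analysis of the four-dimensional space $I(1,1)^{K^{\bold P}_p}$ under the standard intertwining operator (or reproduce the Jacquet-module argument); as written, the existence of the level-$\Gamma^{\bold G}_0(N)$ vector, and hence the statement that the image lands in $S_{k+8}(\Gamma^{\bold G}_0(N))$, is not established. (A small additional inaccuracy: at $p\mid N$ the relevant local component of the lift is always $A(|\cdot|_p)$, with no ``unramified quadratic twist'' variant, since $\mu_p=\mu_{1p}\mu_{2p}^{-1}=|\cdot|_p$ for both ${\rm St}_{GL_2}$ and its unramified twist, and for the adjoint $E_7$ the point $I(1,\mu)$ with $\mu$ nontrivial is irreducible.)
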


Let $\Pi$ be the cuspidal representation of $E_{7,3}^{\rm ad}$ associated to the cusp form on $S_{k+8}(\frak T)$. 
By \cite[p.158]{knapp}, $\Pi_\infty$ is a holomorphic discrete series if and only if $k\geq 10$. When $k=9$, it is the limit of holomorphic discrete series. 
It is known that dim $S_{10}(\Gamma_0(2))^{\rm new}=1$, and it is generated by $f=q+16q^2-156q^3+\cdots.$ So $f$ gives rise to a weight 18 cusp form on $\frak T$ with a holomorphic discrete series. It is known that dim $S_9(\Gamma_0(3), \chi)^{\rm new}=2$, where $\chi$ is the odd Dirichlet character mod 3. The two generators will both give rise to the same cusp form of weight 17 on $\frak T$ with the limit of holomorphic discrete series. We have dim $S_2(\Gamma_0(11))^{\rm new}=1$. It is generated by $f=q-2q^2-q^3+\cdots.$ It gives rise to a weight 10 cusp form on $\frak T$ where the infinity component is not a holomorphic discrete series, but some lowest weight representation.

Note that ${}^L E_7^{\rm sc}=E_7^{\rm ad}(\Bbb C)$, and $E_7^{\rm ad}(\Bbb C)$ does not admit the degree 56 representation. That is the reason why we should consider the Ikeda type lift $\Pi$ as a cuspidal representation of $E_{7,3}^{\rm ad}$.
In order to define the degree 56 standard $L$-function, we consider $\Pi$ as a cuspidal representation of $GE_{7,3}$ with the trivial central character.

\begin{theorem}\label{main-thm4} Assume that $f\in S_k(\Gamma_0(N))^{{\rm new},-}$.
Then the degree $56$ standard L-function $L(s,\Pi,{\rm St})$ of $\Pi$ is given by
$$L(s,\Pi,{\rm St})=L(s,{\rm Sym}^3 \pi_f)L(s,\pi_f)^2 \prod_{i=1}^4 L(s\pm i,\pi_f)^2 \prod_{i=5}^8 L(s\pm i, \pi_f),
$$
where $L(s,{\rm Sym}^3 \pi_f)$ is the symmetric cube $L$-function. 
\end{theorem}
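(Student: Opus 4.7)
The plan is to compute the local factor $L_p(s,\Pi_p,{\rm St})$ at every finite prime $p$, identify the archimedean factor coming from $\Pi_\infty$, and assemble the global product. At each prime the key is to express the Langlands (resp. Satake) parameter of $\Pi_p$ inside the Langlands dual $E_7^{\rm sc}(\mathbb{C})$ of $GE_{7,3}$ and evaluate its trace on the $56$-dimensional minuscule representation.

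For an unramified prime $p\nmid N$, $\Pi_p=I(0,\mu_p)$ is irreducible and induced from the character $\mu_p\circ\nu$ of the Siegel parabolic $\bold P=\bold M\bold N$ with $\bold M\simeq GE_{6,2}$. Its Satake parameter is determined by $\mu_p(p)$ together with the modular shift $\delta_{\bold P}^{1/2}$ and lies in the image of an Arthur-type $SL_2$ inside $E_7^{\rm sc}(\mathbb{C})$ coming from the cocharacter dual to the similitude $\nu$. I would then branch the $56$-dimensional representation under this $SL_2$ combined with the Langlands parameter $\phi_{\pi_f}$ of $\pi_f$. A weight-multiplicity calculation in the $56$ should produce the decomposition, as a representation of $L_F\times SL_2^{\rm Arthur}$,
\[
{\rm St}\;\cong\;{\rm Sym}^3\phi_{\pi_f}\boxtimes\mathbf{1}\;\oplus\;\phi_{\pi_f}\boxtimes(\nu_4\oplus\nu_5\oplus\nu_8\oplus\nu_9),
\]
where $\nu_n$ denotes the $n$-dimensional irreducible representation of $SL_2^{\rm Arthur}$. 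Reading off the resulting Euler factor at $p$ then gives exactly $L_p(s,{\rm Sym}^3\pi_f)L_p(s,\pi_f)^2\prod_{i=1}^4 L_p(s\pm i,\pi_f)^2\prod_{i=5}^8 L_p(s\pm i,\pi_f)$.

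For a ramified prime $p\mid N$, the hypothesis $f\in S_k(\Gamma_0(N))^{{\rm new},-}$ forces $\pi_p'={\rm St}_{GL_2}$, so $\mu_p=|\cdot|_p$ and $\Pi_p=A(|\cdot|_p)$, the unique submodule of $I(1,1)$. Its Langlands/Arthur parameter is the Steinberg parameter on the Levi lifted through the same Arthur $SL_2$-embedding as in the unramified case, so the local $L$-factor is again read off from the decomposition above, now using the Steinberg substitution $L_p(s,\pi_f)=(1-p^{-s-1/2})^{-1}$ and verifying that the shifts assemble into the same product formula at $p$.

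The principal obstacle is the branching computation itself: checking that the restriction of ${\rm St}$ to the relevant $SL_2^{\rm Arthur}\subset E_7^{\rm sc}(\mathbb{C})$ really contributes ${\rm Sym}^3\phi_{\pi_f}$ on the Frobenius side together with the specific reducible sum $\nu_4\oplus\nu_5\oplus\nu_8\oplus\nu_9$ on the Arthur side, so as to produce the shift pattern $\pm 1,\ldots,\pm 4$ with multiplicity $2$ and $\pm 5,\ldots,\pm 8$ with multiplicity $1$. This is a concrete but delicate root-system and weight-multiplicity computation in $E_7$; once it is established, the Steinberg case at $p\mid N$ is a direct adaptation, and the global theorem follows by taking the Euler product.
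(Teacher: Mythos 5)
Your overall strategy at the unramified places (write down the Satake parameter of $\Pi_p$ in the dual group and evaluate it on the $56$-dimensional minuscule representation) is viable, but the branching identity on which the whole computation rests is wrong, and with it the claimed Euler factor does not come out. Restricting the $56$ to the dual of the Siegel Levi, $E_6(\mathbb{C})\times GL_1(\mathbb{C})$, gives $1_{(3)}\oplus 27_{(1)}\oplus \overline{27}_{(-1)}\oplus 1_{(-3)}$, and the parameter of $\Pi_p={\rm Ind}_{\bold P}^{\bold G}|\nu(\cdot)|^{2s_p}$ is the product of the similitude cocharacter evaluated at $\alpha_p=p^{s_p}$ with the principal semisimple element of $E_6(\mathbb{C})$ (this is what $\delta_{\bold P}^{1/2}$ together with the trivial representation of the derived group of the Levi contributes); the relevant ``Arthur $SL_2$'' is therefore the principal $SL_2$ of the dual $E_6$ Levi, not an $SL_2$ attached to the similitude cocharacter, which only supplies the $GL_1$-weights $\pm 1,\pm 3$. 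Under that principal $SL_2$ the $27$ decomposes as $\nu_{17}\oplus\nu_9\oplus\nu_1$, so the correct decomposition is ${\rm Sym}^3\phi_{\pi_f}\boxtimes\nu_1\ \oplus\ \phi_{\pi_f}\boxtimes(\nu_{17}\oplus\nu_9)$: the four zero-spin lines carry $\alpha_p^{\pm 3},\alpha_p^{\pm 1}$ (giving ${\rm Sym}^3$), and $\nu_{17}\oplus\nu_9$ produces integer shifts $0,\pm1,\dots,\pm4$ with multiplicity $2$ and $\pm5,\dots,\pm8$ with multiplicity $1$, i.e.\ exactly the stated product. Your $\phi_{\pi_f}\boxtimes(\nu_4\oplus\nu_5\oplus\nu_8\oplus\nu_9)$ has the right total dimension ($26$), but the even-dimensional pieces $\nu_4,\nu_8$ force half-integral shifts $\pm\tfrac12,\pm\tfrac32,\pm\tfrac52,\pm\tfrac72$, so it cannot yield the shift pattern you yourself say you need; as written the proposal proves a different (incorrect) Euler factor.

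Two further points. First, the paper does not argue via a branching of the $56$ at all: it realizes $GE_{7,3}$ as the Levi of a maximal parabolic of $E_8$, uses inducing in stages to place $\Pi_p$ (the degenerate principal series at $p\nmid N$, and the submodule of ${\rm Ind}_{\bold P}^{\bold G}|\nu(\cdot)|$ at $p\mid N$) inside an explicit principal series of $E_8(\mathbb{Q}_p)$, and reads off the degree $56$ factor from the Langlands--Shahidi method; at $p\mid N$ the local factor is \emph{defined} as the reciprocal of the numerator of the resulting $\gamma$-factor. Your sketch of the ramified case is correspondingly incomplete: besides depending on the (corrected) decomposition, it never specifies which definition of the local factor at $p\mid N$ you are using, and this is precisely where the hypothesis $\epsilon_p=-1$ (so $\pi_p={\rm St}_{GL_2}$ on the nose, cf.\ Remark \ref{L-fun}) enters; for $\epsilon_p=+1$ the same recipe does not reproduce the right-hand side, so ``a direct adaptation'' needs that input made explicit.
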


\begin{remark}\label{L-fun} Note that for $f\notin S_k(\Gamma_0(N))^{{\rm new},-}$, we can define $L(s,\Pi,{\rm St})$, but it will not match with the right hand side.
The situation is similar to that of $PGSp_4$ and $GSp_4$. If we have a cuspidal representation $\pi$ of 
$PGSp_4$, one must extend $\pi$ to a representation $\pi'$ of $GSp_4$ with the trivial central character in order to define the degree 4 $L$-function. If $\pi_p'$ is unramified, the Satake parameter of $\pi_p'$ is $\{\alpha_{0p}, \alpha_{0p}\alpha_{1p}, \alpha_{0p}\alpha_{2p},\alpha_{0p}\alpha_{1p}\alpha_{2p}\}$, where $\alpha_{0p}^2\alpha_{1p}\alpha_{2p}=1$.
Hence there is ambiguity due to the sign of $\alpha_{0p}$.
\end{remark}

Finally, we compute the degree 133 adjoint $L$-function of $\Pi$. Even though $E_7^{\rm ad}(\Bbb C)$ does not admit the degree 56 representation, but it does admit the degree 133 representation which is the adjoint representation. 

\begin{theorem}\label{main-thm5} Assume that $f\in S_k(\Gamma_0(N))^{{\rm new}}$.
Then the degree 133 adjoint $L$-function $L(s,\Pi,{\rm Ad})$ of $\Pi$ is given by
\begin{eqnarray*}
&& L(s,\Pi,{\rm Ad})=\zeta(s)^4 \zeta(s\pm 1)^4 \zeta(s\pm 2)^2 \zeta(s\pm 3)^4 \zeta(s\pm 4)^3 \zeta(s\pm 5)^3
\zeta(s\pm 6)^2\zeta(s\pm 7)^2 \\
&& \phantom{xxxxxxxxx} \cdot \zeta(s\pm 8)\zeta(s\pm 9)\zeta(s\pm 10)\zeta(s\pm 11) \\
&& \phantom{xxxxxxxxx} \cdot L(s,\pi_f,{\rm Sym}^2)^3 \prod_{i=1}^4 L(s\pm i,\pi_f,{\rm Sym}^2)^2 \prod_{i=5}^8 L(s\pm i,\pi_f,{\rm Sym}^2),
\end{eqnarray*}
where $L(s,\pi_f,{\rm Sym}^2)$ is the symmetric square $L$-function. 
\end{theorem}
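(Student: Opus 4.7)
The plan is to compute $L_p(s,\Pi,{\rm Ad})$ at each unramified prime by restricting the $133$-dimensional adjoint representation of $E_7^{\rm sc}(\C)$ along the Arthur--Langlands parameter $\psi_p:W_{\Qp}\times SL_2(\C)\to E_7^{\rm sc}(\C)$ of $\Pi_p$. Theorem~\ref{main-thm4} already pins down this parameter: it factors through a pair of commuting subgroups $SL_2^\pi\times SL_2^A\subset E_7^{\rm sc}(\C)$ such that the standard $56$-dimensional representation restricts to
\[
V_3\boxtimes V_0\;\oplus\; V_1\boxtimes V_{16}\;\oplus\; V_1\boxtimes V_8,
\]
which identifies the embedding up to conjugacy. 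Because the adjoint representation is insensitive to quadratic twists of $\pi_f$, the identity is expected to hold on all of $S_k(\Gamma_0(N))^{{\rm new}}$, not only on the ``$-$''-subspace where the standard $L$-function was computed.

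To carry out the restriction, I would branch the adjoint along the Siegel Levi of $E_7$,
\[
\mathfrak e_7=\mathfrak e_6\oplus \C\oplus V_{27}\oplus V_{\overline{27}},
\]
and then restrict each summand to $SL_2^A$. Using $\mathfrak e_6=\mathfrak f_4\oplus V_{26}^{F_4}$ together with the principal-$SL_2$ decomposition of $\mathfrak f_4$ (exponents $1,5,7,11$, so $V_2+V_{10}+V_{14}+V_{22}$), the decomposition $V_{26}^{F_4}=V_8+V_{16}$, and $V_{27}|_{SL_2^A}=V_0+V_8+V_{16}$ (and similarly for $V_{\overline{27}}$) yields the $SL_2^A$-isotypic decomposition of $\mathfrak e_7$. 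The action of $SL_2^\pi$ on each multiplicity space is then read off by combining the central $GL_1$-weight of the Siegel Levi with the bracket structure of the $F_4$-invariant nilpotent vectors in $V_{27}$ and $V_{\overline{27}}$; the outcome is a splitting of the form
\[
\mathfrak e_7\big|_{SL_2^\pi\times SL_2^A}\;=\; V_0^\pi\boxtimes U_0\;\oplus\; V_2^\pi\boxtimes U_2
\]
for explicit Arthur-$SL_2$ modules $U_0,U_2$.

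Each summand $V_r^\pi\boxtimes V_k^A$ contributes the local factor $\prod_{m}L_p(s-m,V_r(\pi_f))$, where $m$ runs over the weights of $V_k^A$ at ${\rm diag}(p^{1/2},p^{-1/2})$: the trivial $\pi$-piece $V_0^\pi$ yields products of $\zeta_p(s\pm m)$, while $V_2^\pi$ yields products of $L_p(s\pm m,\pi_f,{\rm Sym}^2)$, and summing over $U_0,U_2$ assembles into the product displayed in the theorem. At the ramified primes $p\mid N$ the local component $\pi_p'$ is Steinberg up to an unramified quadratic twist, so the local Arthur parameter has the same shape, the local computation parallels the unramified case, and the twist drops out of ${\rm Ad}$. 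The main obstacle is the second step: carefully tracking the $SL_2^\pi$-action on each multiplicity space inside the $GE_{6,2}$-decomposition of $\mathfrak e_7$, combining the central-torus weight with the action of the $F_4$-invariant nilpotents so that the final decomposition reproduces the exponents of $\zeta(s\pm m)$ and $L(s\pm m,\pi_f,{\rm Sym}^2)$ appearing in the theorem.
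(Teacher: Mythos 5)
Your route is genuinely different from the paper's. The paper does not work with an Arthur-type $SL_2\times SL_2$ at all: for $p\nmid N$ it writes the Satake parameter of $\Pi_p$ explicitly in the $E_7$ root coordinates of \cite{kim1}, namely $2s_p(e_1+e_8)+8e_2+7e_3+6e_4+5e_5+4e_6+3e_7+11e_8$, and evaluates the $133$ weights of ${\rm Ad}$ (the $126$ roots plus the seven zero weights) on it; for $p\mid N$ it realizes $\Pi_p$ inside ${\rm Ind}_{\bold P(\Q_p)}^{\bold G(\Q_p)}|\nu(\cdot)|$ and defines the local factor through Langlands--Shahidi $\gamma$-factors, exactly as in Section \ref{main-thm4-section}. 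Your proposal instead packages the unramified parameter through commuting $SL_2^\pi\times SL_2^A\subset E_7(\C)$ --- in effect the dual pair $SL_2\times F_4$, with $SL_2^A$ the principal $SL_2$ of $F_4$ --- and branches $\mathfrak e_7$; your quoted branching data are all correct, and the cleanest form of the decomposition is $\mathfrak e_7=\mathfrak f_4\oplus\mathfrak{sl}_2\oplus\bigl(26\otimes{\rm Sym}^2\bigr)$ under $F_4\times SL_2^\pi$, which immediately gives $U_2=V_0\oplus V_8\oplus V_{16}$ and $U_0=\mathfrak f_4|_{SL_2^A}=V_2\oplus V_{10}\oplus V_{14}\oplus V_{22}$. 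What the paper's method buys is that no identification of the parameter with a dual-pair embedding is needed; what yours buys is a conceptual (CAP/Arthur) explanation of the shape of the answer, uniform in $p$.

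Two points need attention. First, the step ``the $56$-restriction identifies the embedding up to conjugacy'' is asserted, not proved: for the $L$-function you need the $E_7(\C)$-conjugacy class of each Satake element, and Theorem \ref{main-thm4} is only proved for $f\in S_k(\Gamma_0(N))^{{\rm new},-}$ and for the $GE_{7,3}$-extension, so it does not literally pin the parameter down for all $f$; the honest way to nail the class is to compare directly with the unramified parameter of ${\rm Ind}_{\bold P}^{\bold G}|\nu(\cdot)|^{2s_p}$ (which is what the paper records), after which the dual-pair description is a verification, not an input. Second, and more concretely: carried to the end, your decomposition gives the $\zeta$-part with multiplicities $4,4,3,3,3,3,2,2,1,1,1,1$ at shifts $0,\pm1,\dots,\pm11$, i.e.\ $\zeta(s\pm2)^3\zeta(s\pm3)^3$, whereas the displayed statement has $\zeta(s\pm2)^2\zeta(s\pm3)^4$. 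A direct evaluation of the $133$ weights on the Satake parameter printed in Section \ref{main-thm5-section} also yields $\zeta(s\pm2)^3\zeta(s\pm3)^3$ (the ${\rm Sym}^2$-part agrees with the statement in either case), so the discrepancy appears to be a misprint in the displayed exponents rather than a defect of your branching; but you must confront it explicitly --- do not adjust the $SL_2^A$-decomposition of $\mathfrak f_4$ to force agreement with the printed formula, since the exponents $1,5,7,11$ of $F_4$ leave no such freedom.
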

Unlike the degree 56 standard $L$-function, $L(s,\Pi,{\rm Ad})$ matches with the right hand side for any $f\in S_k(\Gamma_0(N))^{{\rm new}}$.

This paper is organized as follows. Since we are going to use the same notations in \cite{KY}, we will not recall the notations and basic facts from \cite{KY}. Instead we make some corrections in \cite{KY} in Section 2, since we need some of those facts. In Section 3, we supplement some facts we need in this paper. In Section \ref{modular}, we recall the definition of modular forms for congruence subgroups on the exceptional domain.
In Section 5, we study the degenerate Whittaker functions and Fourier-Jacobi coefficients in various settings. 
After reviewing well-known facts in Section 6 for automorphic forms on ${\rm SL}_2(\A_\Q)$, we study local analogue of Fourier-Jacobi expansion in Section 7.
The compatibility of the local and global computations play an important role to prove the main theorem. 
A basic idea is similar to the one initiated by Ikeda which goes as follows: first, we consider 
a formal series defined by Satake parameters of an elliptic modular form and the 
reciprocal polynomial defined by Siegel series; second, we prove its automorphy. 
One key observation due to Ikeda and Yamana is to lift ``automorphy" of an 
elliptic modular form (or automorphic forms in the cuspidal representation of the modular form in question) 
by checking the compatibility as mentioned.  
In Section 8, we will complete proofs of the main theorems.

In Section 9, we interpret the Ikeda type lift as a CAP representation (Cuspidal representation Associated to a Parabolic subgroup). The Ikeda lift on $Sp_{2n}$ is a CAP representation. However, the Ikeda type lift on $E_{7,3}^{\rm ad}$ is not a CAP representation in the usual sense since there are not many $\Bbb Q$-parabolic subgroups of $E_{7,3}$. We show that the Ikeda type lift on $E_{7,3}^{\rm ad}$ is a CAP representation in a more general sense.

Finally, we remark that W.T. Gan and H.Y. Loke \cite{GL} constructed modular forms on $E_{7,3}$ of level $p$. They are not cusp forms. They are obtained by computing the action of the Iwahori-Hecke algebra at $p$ on the adelic form of the modular form of weight 4 constructed in \cite{kim}.

\smallskip

\noindent\textbf{Acknowledgments.} We would like to thank T. Ikeda and S. Yamana for helpful discussions and corrections. 

\section{Corrections and Remarks to \cite{KY}}\label{Cayley}

We would like to make the following corrections to \cite{KY}. It does not affect the main result of the paper.

page 228, line 7: The action of $\bold M$ should be
$g(X,\xi,X',\xi')=(g^*X, \nu(g)\xi, gX', \nu(g)^{-1}\xi')$. It agrees with the Siegel modular group case, namely, $A\in GL_n$ is embedded in $Sp_{2n}$ as diag$(A, {}^t A^{-1})$, rather than diag$({}^t A^{-1}, A)$.

page 229, line 2: $B_1=\nu(g) g^*B$. The proof of Lemma 3.2 should be modified.

page 230, line -16: 
$$h(a) \begin{pmatrix} X\\ \xi\\X'\\ \xi'\end{pmatrix}=\begin{pmatrix} X+4(a^{-1}-1)e_3\times (e_3\times X)+(a-1)(e_3,X)e_3\\ a\xi\\ X'+(a^{-1}-1)(e_3,X')e_3+4(a-1)e_3\times (e_3\times X')\\ a^{-1}\xi'\end{pmatrix}.
$$

page 230, line -1: $\sigma_S(x,y)=\frac 12 (S,\sigma(x,y))$.

page 234, line -15: $kZ=\nu(k)(k^*X+k^*Y\sqrt{-1})$.

page 237, line -3: In the Fourier expansion of $E_{2l}(Z)$, $T\in \frak J(\Bbb Z)_{\geq 0}$.

page 238, line 1: $\Gamma(2l-4n)$.

page 238, line 18: $f(pg)=\delta_{\bold P}^{1/2}(p)|\nu(p)|_{\Bbb A}^s f(g)$.

page 240, line -10: $B=m_{xe_{12}}^* z=m_{-\bar x e_{21}} z=p_{z'}$.

page 242, line -1: $\iota\cdot v_1(x,y,z)=v_1(0,-\overline{x},0)\cdot \iota \cdot v_1(0,y,z+\sigma(x,y))$.

page 243, line 2: $\lambda_S$ should be replaced with the modified ``$\sigma_S$" as above. Accordingly we should have 
\begin{eqnarray}
R(h;f,\varphi)&=&\int_{Y(\A)}\int_{Z(\A)}f(\iota\cdot v_1(0,y,z)\iota_{e_3}h)
\overline{(\omega^\psi_S(h)\phi)(-y)\psi_S(z)}dydz \nonumber \\
&=&\int_{Y(\A)}\int_{Z(\A)}f(\iota\cdot\iota_{e_3}\cdot v_1(-y,0,z)h)
\overline{(\omega^\psi_S(h)\phi)(-y)\psi_S(z)}dydz \nonumber \\
&=&\int_{Y(\A)}\int_{Z(\A)}f(\iota\cdot\iota_{e_3}\cdot v_1(y,0,z)h)
\overline{(\omega^\psi_S(v_1(y,0,z)h)\phi)(0)}dydz.  \nonumber
\end{eqnarray}

page 245, line -10: It should be $e^{-2\pi (T,Y)}$ in the integrand.

page 249: In Section 11, we should take $\bold G$ to be the adjoint exceptional group of type $E_{7,3}^{\rm ad}$ so that $\pi_F$ is a cuspidal representation of $E_{7,3}^{\rm ad}$. In order to define the degree 56 standard $L$-function, we need to first extend $\pi_F$ to a cuspidal representation $\Pi$ of $GE_{7,3}$ with the trivial central character. 

\section{Supplements to \cite{KY}} 

We keep notations as in \cite{KY} and add some facts we need in this paper.
Recall that $\bold H$ is the group generated by $\bold U_{2e_3}$ and $\iota_{e_3}$, and $\bold H\simeq SL_2$. 
We let $h(a)\in \bold H$ correspond to $\begin{pmatrix} a&0\\0&a^{-1}\end{pmatrix}$; let $n(b)\in\Bbb H$ correspond to $\begin{pmatrix} 1&b\\0&1\end{pmatrix}$. Note that $\iota_{e_3}$ corresponds to $\begin{pmatrix} 0&1\\ -1&0\end{pmatrix}$.

\begin{lemma} \label{useful}
\begin{enumerate}
\item $v_1(x,0,0)v_1(0,y,z-\sigma(x,y))=v_1(x,y,z)$ 
\item Let $X\in \frak J_2$. Then $v_1(x,0,0)\in \bold M$ and it acts on 
$X\oplus r:=\left(\begin{array}{cc}
X & 0 \\
0 & r
\end{array}
\right)\in \frak J\simeq \bold N$ as 
$$R_{r,x}:=v_1(x,0,0)\cdot (X\oplus r)=\left(\begin{array}{cc}
X & Xx \\
{}^t\overline{x}X & r+\sigma_R(x,x)
\end{array}
\right)$$ 
via the natural identification $\frak J\simeq \bold N$,\, $B\mapsto p_B$. 
\item $v_1(0,y,z)\in \bold N$ and it is identified with 
$\left(\begin{array}{cc}
z & y \\
{}^t \overline{y} & 0
\end{array}
\right)\in \frak J$ under $\frak J\simeq \bold N$. 
\item For $m\in \bold M$ and $B,Z\in \bold N$, 
$$(mB,Z)=(B,mZ).
$$
\item $n(z')\cdot v_1(0,y,z)=p_B$ where $B=\left(\begin{array}{cc}
z & y \\
{}^t\overline{y} & z'
\end{array}
\right)$. 
\item For $h=h(\gamma)\in \bold H(\R),\ \gamma\in {\rm SL}_2(\R)$, 
$h(E\sqrt{-1})=\diag(\sqrt{-1},\sqrt{-1},\gamma(\sqrt{-1}))$, where $E=diag(1,1,1)$.
\end{enumerate}
\end{lemma}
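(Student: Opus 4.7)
The plan is to verify each of the six identities by direct computation, using the explicit descriptions of $v_1(x,y,z)$, the action of $\bold M$ on $\bold N$, and the embedding $\bold H \hookrightarrow \bold G$ recalled (with the corrections from Section~\ref{Cayley}) from \cite{KY}. Parts (1), (3), and (5) are essentially unwindings of definitions, so I would dispose of them first. For (1), the three-step unipotent subgroup generated by the $v_1(x,y,z)$ has a Heisenberg-type multiplication in which $v_1(x,0,0)\cdot v_1(0,y,z')=v_1(x,y,z'+\sigma(x,y))$; setting $z'=z-\sigma(x,y)$ yields the formula. For (3), under the identification $\bold N\simeq \frak J$ via $B\mapsto p_B$, the element $v_1(0,y,z)$ has only the $Y$- and $Z$-coordinates nonzero and is read off as the stated block matrix. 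Statement (5) is then the composition of (3) with $n(z')$, which, viewed inside $\bold N$ as $p_{\diag(0,0,z')}$, translates the bottom-right entry of $\frak J$ by $z'$.

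For (2), I would start from the (corrected) formula in Section~\ref{Cayley} for $g\in \bold M$ acting on $\bold N$, namely $g\cdot p_B=p_{\nu(g)\,g^\ast B}$, and specialize to $g=v_1(x,0,0)$ and $B=X\oplus r$. Expanding $g^\ast(X\oplus r)$ via the Cayley--Freudenthal formulas produces the off-diagonal blocks $Xx$ and ${}^t\overline{x}X$ and the quadratic bottom-right term $r+\sigma_R(x,x)$; since $\nu(v_1(x,0,0))=1$, no additional similitude twist appears. Part (4) is the $\bold M$-invariance of the symmetric trace pairing on $\frak J$: the action of $\bold M$ on ``the second copy'' of $\bold N$ is contragredient to that on the first (as recorded in the corrected action formula at p.~228 of \cite{KY}), and after the similitude factors cancel one gets $(mB,Z)=(B,mZ)$. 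For (6), I would restrict the $E_7$-action on $\frak T$ to the $SL_2$-subgroup $\bold H$ attached to the short root $2e_3$: $h(a)$ scales the $e_3$-coordinate by $a^2$ and fixes the $e_1,e_2$-coordinates of the base point $E\sqrt{-1}$; $n(b)$ translates the $e_3$-coordinate by $b$; and $\iota_{e_3}$ inverts it. Together these produce the usual $SL_2(\R)$-action on $\sqrt{-1}$ in the third slot, giving $h(E\sqrt{-1})=\diag(\sqrt{-1},\sqrt{-1},\gamma(\sqrt{-1}))$.

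The main obstacle will be (2): the action of $v_1(x,0,0)\in GE_{6,2}$ on $\frak J$ intertwines the Cayley product with the Freudenthal cross/sharp operation, and one has to keep track of $\sigma_R$ carefully (as opposed to $\sigma$ or $2\sigma_R$, each a plausible-looking wrong answer) to land on the correct bottom-right entry. Once that computation is pinned down, the remaining parts follow from the definitions, the pairing invariance in (4), and the explicit form of the $SL_2$-embedding used in (6).
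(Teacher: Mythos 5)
Your plan is correct and is essentially the route the paper takes: it treats (1) and (3) as immediate, derives (2) from the (corrected) action formula of Lemma 3.2 of \cite{KY}, and proves (6) by reducing, via the cocycle relation, to the generators $n(b)$ and $\iota_{e_3}$ of $\bold H$ and tracking only the bottom-right entry of a general $Z=\left(\begin{smallmatrix} Z' & w\\ {}^t\bar w & \tau\end{smallmatrix}\right)$. The only small divergences are that the paper establishes (4) by checking it on the generators $m_{xe_{ij}}$ of $\bold M$ via (3.1) of \cite{KY} rather than by your contragredience argument, and that the step you take for granted in (6) --- that $\iota_{e_3}$ inverts the third coordinate --- is the one genuine computation, carried out in the paper by writing $\iota_{e_3}=p_{e_3}p_{-e_3}'p_{e_3}$, factoring $\iota_{e_3}p_Z=p_{Z_1}kp_A'$, and computing $\nu(k)^{-1}=-\tau$.
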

\begin{proof}
The first and third claim are clear. The second claim follows from Lemma 3.2 of \cite{KY}. 

For (4), it is true for a generator of $M$ such as $m_{xe_{ij}}$ by (3.1) of \cite{KY}.
Hence it is true for all elements in $\bold M$.  

For (6), since $\bold H$ is generated by $n(b)$ and $\iota_{e_3}$, by the cocycle relation,
it is enough to prove that for $Z=\begin{pmatrix} Z'&w\\{}^t\bar w&\tau\end{pmatrix}$,
$$n(b)Z=\begin{pmatrix} *&*\\ *&\tau+b\end{pmatrix},\quad \iota_{e_3}Z=\begin{pmatrix} *&*\\ *&-\frac 1{\tau}\end{pmatrix}.
$$
The first equality is obvious since $n(b)p_Z=p_{Z+be_3}$. For the second equality, write
$\iota_{e_3}p_Z=p_{Z_1}kp_A'$. Then $Z_1=\iota_{e_3}Z$.
Since $\iota_{e_3}=p_{e_3}p_{-e_3}'p_{e_3}$, we have
$p_{-e_3}'p_{e_3}p_Z=p_{Z_1-e_3}kp_A'$. By explicit computation, we can see that $\nu(k)^{-1}=-\tau$,
and $Z_1-e_3=\begin{pmatrix} *&*\\ *&-1-\frac 1{\tau}\end{pmatrix}$.
\end{proof}

\section{Modular forms for congruence subgroups on the exceptional domain}\label{modular}

Let $\bold G$ be the adjoint exceptional group of type $E_{7,3}^{\rm ad}$. It is defined as $\bold G^{\rm sc}/\{\pm 1\}$, where $
\bold G^{\rm sc}$ is the exceptional group defined in \cite{B}.
Let $\bold T$ be a fixed maximal torus of $\bold G$. Let $\bold B$ be the standard Borel subgroup containing $\bold T$.
Let $\{\beta_1, \dots, \beta_7 \}$ be the set of simple roots of $\bold T$ in $\bold B$, numbered as in Bourbaki \cite{Bour}.
The Satake diagram of $E_{7,3}^{\rm ad}$ is
\begin{align*}
{\text{o}}_{\beta_1}\text{------}{\bullet}_{\beta_3}{%
\text{------}}&{\bullet}_{\beta_4}{\text{------}}{\bullet}_{\beta_5}{\text{------}}{\text{o}}_{\beta_6}{\text{------}}{%
\text{o}}_{\beta_7} \\
&\Big\vert \\
&{\bullet}_{\beta_2}
\end{align*}
The $\Bbb Q$-root system is of type $C_3$, and the Dynkin diagram of $C_3$ is

\begin{align*}
{\text{o}}_{\lambda_1}{\text{------}}{\text{o}}_{\lambda_2}{\Longleftarrow}{\text{o}}_{\lambda_3},
\end{align*}
where $\lambda_1$ corresponds to $\beta_1$, $\lambda_2$ to $\beta_6$, $\lambda_3$ to $\beta_7$. Here $\lambda_1,\lambda_2$ have multiplicity 8, and $\lambda_3$ has multiplicity 1.

Let $\bold P=\bold M\bold N$ be the Siegel parabolic subgroup associated to $\lambda_3$. Then $\bold M\simeq GE_{6,2}$.
Let $\bold Q=\bold L\bold V$ be the maximal parabolic subgroup associated to $\lambda_2$. Then the derived group of $\bold L$ is $\bold H\times Spin(1,9)$, and $\bold V$ is the Heisenberg group.

Let $\Gamma=\bold G(\Z)$ be the arithmetic subgroup of $\bold G(\Q)$.
We define the principal congruence subgroup of level $N (\in \Z_{>0})$ by 
$$\Gamma^{\bold G}(N):={\rm Ker}(\bold G(\Z)\stackrel{{\rm mod}\ N}{\lra} \bold G(\Bbb Z/N\Bbb Z)).
$$ 
We call a finite index subgroup $\Gamma$ in $\bold G(\Z)$ a congruence subgroup if $\Gamma$ contains 
$\Gamma^{\bold G}(N)$ for some $N\in \Z_{>0}$. In fact, any finite index subgroup is a congruence subgroup.

For a function $F:\frak T\lra \C$, $k\in \Z_{\ge 0}$, and $g\in \bold G(\R)$, we define the ``slash operator" by 
$$F|[g]_k(Z):=j(g,Z)^{-k}F(gZ),
$$
where $j(g,Z)$ is the canonical factor of automorphy in \cite{KY}.

\begin{defin}Let $\Gamma$ be a congruence subgroup in $\bold G(\Z)$. 
 Let $F$ be a holomorphic function on $\frak T$ which for some integer $k>0$ satisfies
$$F|[g]_k(Z)=F(Z),\quad Z\in \frak T,\, \gamma\in\Gamma.
$$
Then $F$ is called a modular form on $\frak T$ of weight $k$ with respect to $\Gamma$. We denote by $\mathcal{M}_k(\Gamma)$ the space of such forms. 
For a holomorphic function $F:\frak T\lra \C$, 
the boundary map $\Phi$ is defined by 
$$\Phi F(Z')=\lim_{\tau\to \sqrt{-1}\infty}
F\left(\begin{array}{cc}
Z' & \ast \\
{}^t\bar{\ast} & \tau
\end{array}
\right),
$$
where $Z'\in \frak T_2$.
We call $F$ a cusp form of weight $k$ with respect to $\G$ if $F\in \mathcal{M}_k(\Gamma)$ and 
$\Phi F|[g]_k(Z)=0$ for any $g\in \bold G(\Q)$. We denote by $\mathcal{S}_k(\Gamma)$ the space of 
cusp forms of weight $k$ with respect to $\G$.  
\end{defin}
By Koecher principle, the holomorphy at the cusps follows automatically. 
If $F$ is a cusp form, $a_{F|[g]_k}(T)=0$ for $T\notin \frak J(\Z)_{>0}$ and $g\in \bold G(\Q)$ where $a_{F|[g]_k}(T)$ stands for 
the $T$-th Fourier coefficient of $F|[g]_k$. 
The converse is also true. 
We define $$\mathcal{S}_k(\frak T)=\bigcup_{\G}\mathcal{S}_k(\G)$$ where the union runs over 
all congruence subgroups in $\bold G(\Z)$.  

In this paper, we work with adelic language. The translation between adelic language and classical language is easy by the strong approximation theorem as in \cite[(5.3)]{KY}, namely, if $F$ is a modular form in $\mathcal{M}_k(\Gamma)$, we can define an automorphic form $\tilde F$ on $G(\Bbb A)$ by 
$$\tilde F(g)=j(g_\infty, E\sqrt{-1})^{-k} F(g_\infty(E\sqrt{-1})) \quad \text{for $g=\gamma\cdot g_\infty\cdot k'\in \bold G(\Bbb Q)\bold G(\Bbb R)K$}.
$$

\section{Degenerate Whittaker functions}\label{degenerate}
As in Section 3 of \cite{IY}, we study the degenerate Whittaker functions for the exceptional group $\bold G=E_{7,3}^{\rm ad}$ in 
various settings. In this section we often use a natural identification $\frak J\simeq \bold N$ given by $B\mapsto p_B$.

\subsection{Degenerate principal series: non-archimedean case} 
Let $p$ be a rational prime. Let $K_p:=\bold G(\Z_p)$. Then $\bold G(\Bbb Q_p)$ is a split adjoint group of type $E_7$. 
Let $\psi:\Q_p\lra \C^\times$ be the standard non-trivial additive character (defined as the composition of these homomorphisms:
$\psi: \Bbb Q_p\rightarrow \Bbb Q_p/\Bbb Z_p\rightarrow \Bbb Q/\Bbb Z\rightarrow \Bbb R/\Bbb Z$).
For $B\in \frak J(\Q_p)$, we define 
$\psi_S:\bold N(\Q_p)\lra \C^\times$,\, $p_Z\mapsto \psi_S(Z)=\psi((S,Z))$. 
For $\mu:\Q^\times_p\lra \C^\times$ a (unitary) character and $s\in\Bbb C$,
let $I(s,\mu)$ be the degenerate principal series representation of $\bold{G}(\Q_p)$ consisting of any smooth $K_p$-finite     
function $\phi:\bold{G}(\Q_p)\lra \C$ such that 
$$\phi(pg)=\delta^{\frac{1}{2}}_{\bold P}(p)\, \mu(\nu(p)) |\nu(p)|^s \phi(g)
$$
for any $p\in \bold P(\Q_p)$ and $g\in \bold{G}(\Q_p)$,
where $\bold P=\bold M\bold N$ is the Siegel parabolic subgroup.
Note that the modulus character $\delta_P$ is given by $\delta_{\bold P}(mn)=|\nu(m)|^{18}_{p}$. We denote it also by 
$I(s,\mu)=\mbox{Ind}_{\bold P(\Q_p)}^{\bold G(\Q_p)} \ (\mu\circ \nu) |\nu(\cdot) |^s$.
For any smooth representation $\Pi$ of $\bold G(\Q_p)$ we put 
$${\rm Wh}_B(\Pi):={\rm Hom}_{N(\Q_p)}(\Pi,\psi_S).
$$
\begin{prop}\label{irr-uni} It holds that 
\begin{enumerate}
\item If $-1<Re(s)<1$, then $I(s,\mu)$ is irreducible and unitary. 
\item ${\rm dim}{\rm Wh}_B(I(s,\mu))=1$ for all character $\mu$ and $B\in R_3(\Q_p)$. 
\item If $\mu$ is non-trivial, $I(1,\mu)$ is irreducible and unitary; $I(1,1)$ has a unique irreducible submodule $A(|\cdot|_p)$ which is unitary. Moreover, ${\rm dim}\, {\rm Wh}_B(A(|\cdot|_p))=1$ for all $B\in R_3(\Q_p)$. 
\end{enumerate} 
\end{prop}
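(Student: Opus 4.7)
The statement naturally splits into an irreducibility-and-unitarity assertion and a Whittaker-multiplicity-one assertion, with a small amount of extra analysis at the boundary point $s=1$. I would handle these in turn, reducing throughout to the explicit structure of the Siegel parabolic $\bold P = \bold M\bold N$ (with $\bold N$ abelian of dimension $27$, identified with $\frak J$) and the classification of $\bold M$-orbits on $\frak J(\Q_p)$ by Jordan rank.

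\textbf{Reducibility and unitarity.} For (1), I would start from the cited result of \cite{W} that $I(0,\mu)$ is irreducible. The standard intertwining operator $M(s,\mu)\colon I(s,\mu) \to I(-s,\mu^{-1})$ is a rational function of $p^{-s}$ whose reducibility locus is discrete and, by the Gindikin--Karpelevich computation for the Siegel parabolic of $E_7$, can only meet the closed strip $|{\rm Re}(s)|\le 1$ at the endpoints $s=\pm 1$; hence the open strip is free of reducibility. Unitarity on the strip then follows by the standard complementary-series deformation argument: $I(s,\mu)$ is unitarily induced on ${\rm Re}(s)=0$, and the Hermitian pairing supplied by $M(s,\mu)$ remains definite as $s$ varies in the connected region of irreducibility.

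\textbf{Whittaker multiplicity one.} For (2), I would invoke a Bruhat--Mackey analysis of $I(s,\mu)|_{\bold N(\Q_p)}$: the filtration indexed by $\bold P\backslash \bold G/\bold N$ double cosets corresponds to the $\bold M$-orbit stratification of $\frak J(\Q_p)$ by rank. A nondegenerate $B\in R_3(\Q_p)$ lies in the unique open orbit, so $\psi_B$ is nontrivial on the stabilizer of every non-open stratum, and only the open Bruhat cell $\bold P\iota\bold N$ can carry a $\psi_B$-equivariant functional; a generic-stabilizer argument then bounds the dimension by $1$. To exhibit a nonzero functional I would use
\[
\phi\;\longmapsto\;\int_{\bold N(\Q_p)}\phi(\iota\cdot p_Z)\,\overline{\psi_B(Z)}\,dZ,
\]
which converges absolutely for ${\rm Re}(s)$ large, is manifestly nonzero there, and admits meromorphic continuation in $s$. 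Normalizing by the product $\prod_{i=0}^{2}L(9-4i,\mu)\,|\det B|_p^{9/2}$ used to define $w_B^{\mu_p}$ is designed precisely to cancel any stray zeros and poles in the closed strip.

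\textbf{The boundary point and the main obstacle.} For (3), when $\mu\ne 1$ the reducibility analysis of (1) continues to give irreducibility of $I(1,\mu)$ (the submodule at $s=1$ appears only for $\mu=1$), with unitarity by continuity. For $\mu=1$, the unique irreducible submodule $A(|\cdot|_p)\subset I(1,1)$ is the image of $M(s,1)$ at the appropriate specialization, and is unitary by the usual residual-automorphic interpretation of the Siegel Eisenstein series. The main obstacle, and the step I expect to demand the most care, is the assertion $\dim{\rm Wh}_B(A(|\cdot|_p))=1$: upper-boundedness follows from (2) applied to $I(1,1)$, so the real content is to rule out that the nonzero functional produced above restricts to zero on the submodule. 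I would control this by an asymptotic analysis comparing the order of vanishing of the unnormalized integral at $s=1$ with the order of the projection $I(s,1)\twoheadrightarrow I(1,1)/A(|\cdot|_p)$, following the technique of \cite{IY, yamana1} and adapting the bookkeeping to the $E_7$ Siegel parabolic.
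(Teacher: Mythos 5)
Your outline for part (2) (Bruhat--Mackey filtration of $I(s,\mu)|_{\bold N(\Q_p)}$ by the rank stratification of $\frak J(\Q_p)$, plus the Jacquet integral for nonvanishing) is essentially Karel's argument and is acceptable as a sketch, but parts (1) and (3) have genuine gaps. For (1), your claim that the Gindikin--Karpelevich analysis of $M(s,\mu)$ shows ``the open strip is free of reducibility'' proves too much: it is insensitive to the isogeny class, whereas the statement is not. As the paper stresses, for the \emph{simply connected} $E_7$ and $\mu$ quadratic the representation $I(0,\mu)$ is reducible (a sum of two irreducibles); this reducibility at the unitary axis is an $R$-group/elliptic-tempered phenomenon that is invisible to poles of the standard intertwining operator, so any correct argument must use adjointness somewhere, and yours does not. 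The paper instead invokes the structure theory specific to this situation --- irreducibility and $\dim\mathrm{Wh}_B=1$ from \cite{W} and \cite{Ka}, unitarity from \cite{Tadic} --- rather than a generic deformation argument.

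For (3) you correctly identify $\dim\mathrm{Wh}_B(A(|\cdot|_p))=1$ as the main obstacle, but what you offer (comparing orders of vanishing of the unnormalized integral at $s=1$ with the projection onto $I(1,1)/A(|\cdot|_p)$, ``following \cite{IY,yamana1}'') is a plan, not an argument, and it lacks the one structural input that makes the step easy. The paper uses Lemma 5.1.1 and Corollary 5.1.2 of \cite{W}: there is an exact sequence $0\to |\nu|^{8}_p\to I(-1,1)\to V\to 0$, whose contragredient gives $0\to V^\vee\to I(1,1)\to |\nu|^{-8}_p\to 0$ with $I(1,1)$ of length at most two; this simultaneously pins down $A(|\cdot|_p)=V^\vee$ as the unique irreducible submodule (you instead assert without proof that it is the image of $M(s,1)$) and identifies the quotient as the degenerate character $|\nu|^{-8}_p$, which carries no $\psi_B$-functional since $B$ is nondegenerate. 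Hence any functional on $I(1,1)$ vanishing on the submodule would factor through this quotient and be zero, so the unique Whittaker functional restricts nontrivially and, by exactness of the twisted Jacquet functor, $\dim\mathrm{Wh}_B(A(|\cdot|_p))=1$. Without identifying the quotient, your asymptotic comparison has nothing concrete to compare against; likewise, deducing unitarity of $A(|\cdot|_p)$ from a residual-Eisenstein interpretation is a substantial global detour that itself would need to be justified.
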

\begin{proof} The irreducibility of (1) and (2) follows from \cite{W} and \cite{Ka}. 
The unitarity follows from \cite{Tadic}. 
 Here it is necessary that we take $\bold G$ to be the adjoint group. Indeed if $\bold G$ is simply connected and $\mu$ is quadratic, then $I(0,\mu)$ is reducible and a sum of two irreducible representations. 

For the last claim by Lemma 5.1.1  and Corollary 5.1.2 of \cite{W} there exists a unique subquotient $V$ of $I(-1,1)$ such that 
$0\lra |\nu|^8_p\lra I(-1,1)\lra V\lra 0.$
By taking the contragredient representations, we have 
$$0\lra V^\vee \lra I(1,1)\lra |\nu|^{-8}_p \lra 0.
$$
Since the length of $I(1,1)$ is at most two by Corollary 5.1.2 of \cite{W} again, $V^\vee$ is uniquely determined up 
to isomorphism and we may denote it by $A(\mu)$. The latter part of (3) follows from the above exact sequence since 
$B$ is non-degenerate, hence ${\rm Wh}_B(|\nu|^{-8}_p)=0$.  
\end{proof}

\subsection{Jacquet integrals and Siegel series} 
Let us keep the notations in the previous subsection. 
For $u\in \C$, we define the function $\varepsilon_u$ on $\bold G(\Q_p)$ by 
$$\varepsilon_u(g)=|\nu(p)|^u,\ g=pm\in \bold G(\Q_p)=\bold P(\Q_p)K_p.
$$
Recall $R_3(K)=\{ X\in \frak J_K\ |\, det(X)\ne 0\}$.
For $B\in R_3(\Q_p)$ and $\phi\in I(s,\mu)$, we define the Jacquet integral by 
$$\textbf{w}^{\mu,s,u}_B(h):=\int_{N(\Q_p)}(\phi\cdot \varepsilon_u)(\iota p_z)\overline{\psi_B(z)}dz,\ u\in \C.
$$
It is absolutely convergent for ${\rm Re}(u)>9-Re(s)$ and in fact it is a polynomial in $p^{-u}$ (cf. \cite{Ka}). 
Therefore we can evaluate $\textbf{w}^{\mu,s,u}_B(h)$ at $u=0$ and put $\textbf{w}^{\mu,s}_B(h):=\textbf{w}^{\mu,s,0}_B(h)$. 
The functional $\textbf{w}^{\mu,s}_B$ defines a vector in ${\rm Wh}_B(I(s,\mu))$. 

In what follows we assume that $Re(s)>-1$ and consider 
\begin{equation}\label{nw}
w^{\mu,s}_B(\phi)=|\det(B)|^{\frac{9}{2}}_p\prod_{j=0}^2 L(9-4j+s,\mu)\textbf{w}^{\mu,s}_B(\phi)
\end{equation} 
where $L(s,\mu)=(1-\mu(p)p^{-s})^{-1}$ if $\mu$ is unramified. If $\mu$ is ramified, $L(s,\mu)=1$.  
Then we have 
\begin{lemma}\label{degp} Assume $B\in R_3(\Q_p)$ and $Re(s)>-1$. Then 
\begin{enumerate}
\item the functional $w^{\mu,s}_B\in {\rm Wh}(I(s,\mu))$ is non-zero and the restriction of 
$w^{\mu,1}_B$ to $A(|\cdot|_p)$ is also non-zero when $s=1$ and $\mu=1$;
\item for $m\in M(\Q_p)$ and $\phi\in I(s,\mu)$, $w^{\mu,s}_{B}(m\cdot \phi)=\mu(\nu(m))^{-1}|\nu(m)|^{-1}w^{\mu,s}_{m\cdot B}(\phi)$, 
\newline where $m\cdot \phi(g)=\phi(gm)$, and $m\cdot B$ is defined by $p_{m\cdot B}=m p_B m^{-1}$.    
\end{enumerate} 
\end{lemma}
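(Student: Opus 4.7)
The plan for part (1) is as follows. By Proposition \ref{irr-uni}(2) the space ${\rm Wh}_B(I(s,\mu))$ is one-dimensional, so to show $w^{\mu,s}_B$ is nonzero it suffices to evaluate it on a single well-chosen vector. In the unramified case I will take the normalized spherical vector, whereupon the Jacquet integral computes (up to a nonzero constant) the classical local Siegel series $b_p(B;\mu_p,s)$; the product $\prod_{j=0}^{2}L(9-4j+s,\mu)$ is precisely the normalizing denominator that converts $b_p(B;\mu_p,s)$ into a polynomial in $p^{-s}$ with constant term $1$, so after multiplication by $|\det B|_p^{9/2}$ the value is manifestly nonzero. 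The ramified case is handled analogously, using a test vector supported on a small neighborhood of the big Bruhat cell. For the restriction to $A(|\cdot|_p)$ at $(s,\mu)=(1,1)$, I will use the exact sequence
$$0\longrightarrow A(|\cdot|_p)\longrightarrow I(1,1)\longrightarrow |\nu|_p^{-8}\longrightarrow 0$$
from the proof of Proposition \ref{irr-uni}(3). Since $|\nu|_p^{-8}$ is a one-dimensional character of $\bold G(\Q_p)$ on which $\bold N(\Q_p)$ acts trivially, and $\psi_B$ is nontrivial (because $B$ is non-degenerate), we have ${\rm Wh}_B(|\nu|_p^{-8})=0$. Left-exactness of ${\rm Hom}_{\bold N(\Q_p)}(-,\psi_B)$ then produces an injection ${\rm Wh}_B(I(1,1))\hookrightarrow {\rm Wh}_B(A(|\cdot|_p))$, so the nonzero functional $w^{1,1}_B$ restricts to a nonzero functional on the submodule $A(|\cdot|_p)$.

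Part (2) will be proved by unfolding the Jacquet integral and performing a change of variables. Starting from
$$w^{\mu,s}_B(m\cdot\phi)=|\det B|_p^{9/2}\prod_{j=0}^{2}L(9-4j+s,\mu)\int_{\bold N(\Q_p)}\phi(\iota\,p_z\,m)\,\overline{\psi_B(z)}\,dz,$$
I substitute $z=m\cdot z'$. Four ingredients enter: (i) the conjugation identity $m\,p_{z'}\,m^{-1}=p_{m\cdot z'}$, which lets me write $\iota\,p_z\,m=(\iota m\iota^{-1})\iota\,p_{z'}$; (ii) the fact that $\iota$ normalizes $\bold M$, and since $\iota\bold N\iota^{-1}=\bold N^{\rm opp}$ one has $\delta_{\bold P}(\iota m\iota^{-1})=\delta_{\bold P}(m)^{-1}$ and $\nu(\iota m\iota^{-1})=\nu(m)^{-1}$; (iii) the induction rule $\phi(pg)=\delta_{\bold P}^{1/2}(p)\mu(\nu(p))|\nu(p)|^s\phi(g)$ applied to $\iota m\iota^{-1}\in\bold M\subset\bold P$; and (iv) the self-adjointness $(mB,Z)=(B,mZ)$ of the trace pairing (Lemma \ref{useful}(4)), giving $\psi_B(m\cdot z')=\psi_{m\cdot B}(z')$. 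The Jacobian $dz=\delta_{\bold P}(m)\,dz'$ together with the transformation law $\det(m\cdot B)=\nu(m)^{?}\det(B)$ of the Jordan cubic norm under the corrected action $B\mapsto \nu(g)g^*B$ (Section \ref{Cayley}) provide the remaining $|\nu(m)|$-powers, which are absorbed into the ratio $|\det B|_p^{9/2}/|\det(m\cdot B)|_p^{9/2}$ in the normalizing constants. After all factors are combined the dependence on $s$ drops out, leaving precisely $\mu(\nu(m))^{-1}|\nu(m)|^{-1}$.

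The main obstacle is the exponent bookkeeping in part (2). One must pin down the exact power of $\nu(m)$ by which $\det(m\cdot B)$ scales under the corrected action of Section \ref{Cayley}, and then check that this is the unique exponent for which the various $|\nu(m)|$-contributions---from the Jacobian $\delta_{\bold P}(m)=|\nu(m)|^{18}$, from the induction formula (which yields factors $\delta_{\bold P}^{1/2}(\iota m\iota^{-1})=|\nu(m)|^{-9}$, $\mu(\nu(\iota m\iota^{-1}))=\mu(\nu(m))^{-1}$ and $|\nu(\iota m\iota^{-1})|^s=|\nu(m)|^{-s}$), and from the normalization $|\det B|_p^{9/2}$---conspire to cancel all $s$-powers and yield the compact answer $\mu(\nu(m))^{-1}|\nu(m)|^{-1}$. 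None of the individual steps is deep, but the signs and exponents must be aligned with care given the specific conventions on $\nu$, $\delta_{\bold P}$, and the Jordan norm in this exceptional setting.
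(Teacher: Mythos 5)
Your route is essentially the paper's own (very terse) proof: part (1) via Proposition \ref{irr-uni} together with the exact sequence $0\to A(|\cdot|_p)\to I(1,1)\to|\nu|_p^{-8}\to 0$ and ${\rm Wh}_B(|\nu|_p^{-8})=0$, and part (2) via the ``usual change of variables''. However, two points need attention. In part (1), your unramified argument has a gap as stated: normalizing the spherical value by $\prod_{j=0}^2L(9-4j+s,\mu)$ turns the Siegel series into a polynomial $f_B$ in $p^{-s}$ (monic in the paper's normalization, not constant term $1$), and a nonzero \emph{polynomial} is not ``manifestly nonzero'' at the particular $s$ in question --- the lemma asserts nonvanishing of the functional for each fixed $s$ with ${\rm Re}(s)>-1$, e.g.\ $s=0$, where a priori $f_B(p^{-s})$ could vanish. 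The big-cell test vector you invoke only for ramified $\mu$ is the argument that works uniformly: take $\phi$ supported on $\bold P(\Q_p)\,\iota\,\bold N(\Q_p)$ with $\phi(\iota p_z)=\psi_B(z)\mathbf{1}_U(z)$ for a small compact open $U\subset\frak J(\Q_p)$, so the Jacquet integral equals ${\rm vol}(U)>0$, while the normalizing $L$-factors are finite and nonzero for ${\rm Re}(s)>-1$ because $\mu$ is unitary; use this in all cases. Your exact-sequence argument for the restriction to $A(|\cdot|_p)$ is correct and is exactly what the paper's Proposition \ref{irr-uni}(3) provides.

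In part (2) you have only a plan, not a proof: the decisive input, the power of $\nu(m)$ by which $\det(m\cdot B)$ scales under the corrected action of Section \ref{Cayley}, is left as ``$?$'', and the conclusion that all $|\nu(m)|$- and $s$-powers ``conspire'' to give $\mu(\nu(m))^{-1}|\nu(m)|^{-1}$ is asserted rather than verified. The missing fact is $\det(m\cdot B)=\nu(m)^2\det(B)$ (this is what the paper uses later, $c_{m\cdot B}=c_{\nu(m)^2\det B}$ in Section 8), and the cancellation must then actually be carried out against $\delta_{\bold P}(m)=|\nu(m)|^{18}$ (the Jacobian of $z\mapsto m\cdot z$), $\nu(\iota m\iota^{-1})=\nu(m)^{-1}$ in the induction rule, and the adjunction for the pairing $(\,\cdot,\cdot\,)$; this exponent bookkeeping is the entire content of (2) beyond the formal unfolding, and it is sensitive to the conventions for $\nu$, $g^*$ and the conjugation action, so it cannot be waved through. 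One further small point: since for $-1<{\rm Re}(s)$ the Jacquet integral is defined by the $u$-regularization, the change of variables should be performed for ${\rm Re}(u)$ large and then evaluated at $u=0$; this is harmless because $\varepsilon_u(\iota m\iota^{-1}g)=|\nu(m)|^{-u}\varepsilon_u(g)$ contributes a factor that is $1$ at $u=0$, but it should be said.
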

\begin{proof}The first claim follows from Proposition \ref{irr-uni}. The second claim follows from the usual change of variables.  
\end{proof}

\begin{lemma}\label{fc1} Assume $Re(s)>-1$. Let $\phi\in I(s,\mu)$. For any compact subset $C$ of $\bold G(\Q_p)$ there is a 
non-negative valued Schwartz function $\Phi$ on $\frak J(\Q_p)$ and a constant $M>0$ such that 
$$|w^{\mu,s}_B(c\cdot \phi)|\le M|\Phi(B)|
$$
for all $c\in C$ and $B\in R_3(\Q_p)$. 
\end{lemma}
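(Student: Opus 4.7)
The plan is to reduce the uniform estimate to a pointwise Bruhat--Schwartz statement for a single smooth vector, and then invoke $p$-adic Fourier analysis on $\frak J(\Q_p)$. First, since $\phi$ is smooth $K_p$-finite, it is fixed on the right by an open compact subgroup $K_0\subset \bold G(\Q_p)$; compactness of $C$ yields an open compact subgroup $K''\subset \bigcap_{c\in C} cK_0 c^{-1}$, so $\{c\cdot\phi:c\in C\}$ is contained in the finite-dimensional space $V_0:=I(s,\mu)^{K''}$, and the continuous map $c\mapsto c\cdot\phi$ has bounded coordinates with respect to any fixed basis $\phi_1,\dots,\phi_m$ of $V_0$. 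Writing $c\cdot\phi=\sum_i a_i(c)\phi_i$, it suffices to prove that each $B\mapsto w^{\mu,s}_B(\phi_i)$ is a Bruhat--Schwartz function, and then take $\Phi(B):=\sum_i |w^{\mu,s}_B(\phi_i)|$ and $M:=\sup_{c\in C}\sum_i |a_i(c)|$.

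Fix a smooth $\phi'\in I(s,\mu)$ and put $f_{\phi',u}(z):=(\phi'\cdot \varepsilon_u)(\iota\cdot p_z)$, so that
\[
\textbf{w}^{\mu,s,u}_B(\phi')=\int_{\frak J(\Q_p)} f_{\phi',u}(z)\,\overline{\psi_B(z)}\,dz
\]
is the Fourier transform of $f_{\phi',u}$ at $B$. By smoothness, $f_{\phi',u}$ is locally constant in $z$, hence invariant under translation by some open compact lattice $L\subset \frak J(\Q_p)$ independent of $u$. The Iwasawa decomposition $\iota\cdot p_z=p(z)k(z)$ on the big cell yields a pointwise bound $|f_{\phi',u}(z)|\le C_{\phi'}\, |\nu(p(z))|_p^{9+\mathrm{Re}(s+u)}$; a direct Jordan-algebra computation shows that $|\nu(p(z))|_p$ decays polynomially as $|z|_p\to\infty$, whence $f_{\phi',u}\in L^1(\frak J(\Q_p))$ for $\mathrm{Re}(u)\gg 0$. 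The Fourier transform of a locally constant $L^1$ function is bounded and supported in the Pontryagin annihilator $L^*$, itself a compact open subgroup of $\frak J(\Q_p)$; hence $\textbf{w}^{\mu,s,u}_B(\phi')$ is Bruhat--Schwartz in $B$ for such $u$. Since by \cite{Ka} the function $u\mapsto \textbf{w}^{\mu,s,u}_B(\phi')$ is a polynomial in $p^{-u}$, extracting its coefficients by a finite linear combination in $u$ shows that the specialization at $u=0$, namely $\textbf{w}^{\mu,s}_B(\phi')$, is also Bruhat--Schwartz in $B$.

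Finally, the normalization factor $|\det(B)|_p^{9/2}\prod_{j=0}^{2} L(9-4j+s,\mu)$ is locally constant in $B$ and of polynomial growth, so multiplication by it preserves compact support and local constancy of the Whittaker coefficient, yielding the desired Bruhat--Schwartz estimate for $w^{\mu,s}_B(\phi')$. The main technical obstacle is the middle step: carrying out the Iwasawa decomposition $\iota\cdot p_z=p(z)k(z)$ in $E_{7,3}^{\rm ad}(\Q_p)$ and controlling $|\nu(p(z))|_p$ quantitatively so as to establish $L^1$-integrability of $f_{\phi',u}$ for $\mathrm{Re}(u)\gg 0$. This is the $E_{7,3}$ analog of the argument in \cite[\S3]{IY} and uses the Jordan-algebra identities for $\iota$ and $p_z$ recorded in Section 3.
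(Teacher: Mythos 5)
Your reduction to finitely many vectors is fine (on the compact set $C$ the map $c\mapsto c\cdot \phi$ is in fact locally constant, since $\phi$ is fixed by some open compact $K_0$ and $c'\cdot\phi=c\cdot\phi$ for $c'\in cK_0$, so it takes only finitely many values), and so is the support statement: for $\mathrm{Re}(u)\gg 0$ the function $z\mapsto (\phi'\cdot\varepsilon_u)(\iota p_z)$ is invariant under translation by a lattice $L$, hence $\mathbf{w}^{\mu,s,u}_B(\phi')$ vanishes for $B$ outside the dual lattice; being a polynomial in $p^{-u}$ that vanishes for all large $\mathrm{Re}(u)$, it vanishes identically, in particular at $u=0$. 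The genuine gap is the last step, where you transfer the bound from $\mathrm{Re}(u)\gg 0$ to $u=0$ by ``extracting the coefficients by a finite linear combination in $u$''. The degree of $\mathbf{w}^{\mu,s,u}_B(\phi')$ as a polynomial in $p^{-u}$ is \emph{not} bounded independently of $B$: already for the spherical section it is essentially $d=\mathrm{ord}_p\det(B)$ (the degree of the Siegel series $f_B$), which is unbounded as $B$ ranges over the dual lattice intersected with $R_3(\Q_p)$, because $|\det B|_p$ can be arbitrarily small there. So no fixed finite set of evaluation points, and no fixed linear combination, recovers the value at $u=0$ uniformly in $B$; and if you do extract the coefficients (say by integrating over a vertical line $\mathrm{Re}(u)=c>9-\mathrm{Re}(s)$, where your $L^1$ bound applies), you only get $|a_j(B)|\le p^{jc}\|f_{\phi',c}\|_{L^1}$, hence $|\mathbf{w}^{\mu,s,0}_B(\phi')|\ll |\det B|_p^{-c}$ and, after normalization, $|w^{\mu,s}_B(\phi')|\ll |\det B|_p^{9/2-c}$ with $c>9-\mathrm{Re}(s)$. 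That blows up on the support set; it is the same polynomial-in-$|\det B|_p^{-1}$ quality of estimate as Lemma \ref{fc2}, not the uniform (Schwartz) bound the lemma asserts.

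The real analytic content of the lemma is precisely the uniform boundedness of the \emph{normalized} value $|\det B|_p^{9/2}\prod_{j}L(9-4j+s,\mu)\,\mathbf{w}^{\mu,s}_B(\phi')$ at the actual parameter $s$ with $\mathrm{Re}(s)>-1$, i.e.\ a cancellation/size statement about the degenerate Whittaker values (Siegel series) at that point. An $L^1$ estimate in the region of absolute convergence cannot produce it, because analytic continuation in $u$ does not preserve sup-norms uniformly in $B$. This is exactly what the paper's appeal to Lemma 3.3 of \cite{yamana2} supplies, by working directly with the normalized Jacquet integral at $s$; your argument would need that input added. Two minor points: the Fourier transform of a locally constant $L^1$ function is bounded with support in the dual lattice but need not itself be locally constant, so ``Bruhat--Schwartz'' is an overstatement (harmless for the bound); and the absolute convergence of the Jacquet integral for $\mathrm{Re}(u)>9-\mathrm{Re}(s)$, which you flag as the main technical obstacle, is already quoted from \cite{Ka}, so no new Iwasawa-decomposition estimate is needed for that part.
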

\begin{proof} The claim is similar to Lemma 3.3 of \cite{yamana2} and the proof there is applicable to this case.
\end{proof}

For simplicity, we denote $w^{\mu,0}_B$ by $w^\mu_B$, and the restriction of 
$w^{\mu,1}_B$ to $A(|\cdot|_p)$ by $w^{|\cdot|_p}_B$.

For $B\in R_3(\Q_p)$, we denote by $b(B,s)$, the Siegel series defined in \cite{Ka1}. 
Put $\gamma(s)=\ds\prod_{j=0}^2 \zeta_p(s-4j)$ where $\zeta_p(s)=(1-p^{-s})^{-1}$. Then it is well-known (cf. \cite{Ka1}) that  
there exists a polynomial $f_B(X)\in \Z[X]$ such that $f_B(p^{9-s}):=\gamma(s)b(B,s)$ (cf. \cite{Ka1} or Section 6 of \cite{KY}). 
Then by \cite{Ka1}, since $\varepsilon_s\in I(s-9,1)$, for $B\in R_3(\Q_p)$,
$$\textbf{w}_B^{1,s-9}(\varepsilon_s)=b(B,s)=\gamma(s)^{-1}f_B(p^{9-s}).
$$

\begin{lemma}\label{fc2} We have
$|w^{1,s-9}_B(\varepsilon_s)|\le |\det(B)|^{-\frac {15}2}_p$ for ${\rm Re}(s)>9$. 
\end{lemma}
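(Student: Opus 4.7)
The plan is to translate the claimed inequality into an estimate on the Siegel polynomial $f_B$ and then invoke the explicit Karel-type description. Setting $\mu=1$ in the definition (\ref{nw}) and replacing $s$ by $s-9$ yields
$$w^{1,s-9}_B(\varepsilon_s) = |\det(B)|_p^{9/2}\,\prod_{j=0}^2 L(s-4j,\, 1)\,\mathbf{w}_B^{1,s-9}(\varepsilon_s).$$
Since $\prod_{j=0}^2 L(s-4j,1)=\prod_{j=0}^2 \zeta_p(s-4j)=\gamma(s)$ and, as already observed just before the lemma, $\mathbf{w}_B^{1,s-9}(\varepsilon_s) = b(B,s) = \gamma(s)^{-1}f_B(p^{9-s})$, the two factors $\gamma(s)$ cancel and one gets the clean identity
$$w^{1,s-9}_B(\varepsilon_s) = |\det(B)|_p^{9/2}\, f_B(p^{9-s}).$$
The desired inequality is therefore equivalent to $|f_B(p^{9-s})|\le |\det(B)|_p^{-12}$ for $\mathrm{Re}(s)>9$.

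For the polynomial bound, the decisive point is that for $\mathrm{Re}(s)>9$ the argument $X:=p^{9-s}$ lies in the open unit disc, $|X|=p^{9-\mathrm{Re}(s)}<1$, so evaluation of any $\Z[X]$-polynomial is majorised by the sum of absolute values of its coefficients. I would invoke Karel's explicit computation of the Siegel series in \cite{Ka1} (summarised in Section 6 of \cite{KY}): the degree of $f_B(X)$ and the sizes of its integer coefficients are controlled linearly by $\mathrm{ord}_p(\det B)$. A direct substitution then shows that $|f_B(p^{9-s})|$ grows at most like a fixed power of $p^{\mathrm{ord}_p(\det B)}=|\det(B)|_p^{-1}$, and the exponent coming out of Karel's formulas is comfortably below $12$.

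To handle $B\in R_3(\Q_p)$ rather than just $R_3(\Z_p)$, I would use the equivariance in Lemma \ref{degp}(2): by an element of $M(\Q_p)$ one can replace $B$ by an $M$-translate of integral type, and the resulting scaling factor of the form $|\nu(m)|^{a}$ balances with the $|\det(m\cdot B)|_p^{-12}$ target after the standard identity $\det(m\cdot B)=\nu(m)^{c}\det(B)$ on the Jordan cubic. The main obstacle is the numerical bookkeeping in the last step, namely verifying that the combined degree and coefficient estimates from Karel's recipe, when evaluated at $|X|<1$, fit within the exponent $-12$. Once the constants are tracked, no further analytic input is needed beyond the elementary polynomial estimate $|f_B(X)|\le\sum_i |a_i||X|^i$.
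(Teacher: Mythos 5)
Your reduction is the same as the paper's: with $\mu=1$ and $s$ replaced by $s-9$ in (\ref{nw}), the factor $\prod_{j=0}^2 L(s-4j,1)=\gamma(s)$ cancels against $\gamma(s)^{-1}$ in $\textbf{w}_B^{1,s-9}(\varepsilon_s)=b(B,s)=\gamma(s)^{-1}f_B(p^{9-s})$, so the lemma is exactly the statement $|f_B(p^{9-s})|\le |\det(B)|_p^{-12}$ for ${\rm Re}(s)>9$. But at that point you stop: you assert that Karel's description of the Siegel series bounds the degree and the coefficients of $f_B$ ``linearly in ${\rm ord}_p(\det B)$'' with an exponent ``comfortably below $12$,'' and you explicitly defer the numerical bookkeeping. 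That bookkeeping \emph{is} the content of the lemma. The paper's proof imports from the proof of Lemma 9.3 of \cite{KY} the precise facts that $f_B(X)=X^d+a_1X^{d-1}+\cdots+a_d$ with $p^{-d}=|\det(B)|_p$ and $|a_i|\le p^{11d}$, whence $|f_B(p^{9-s})|\le p^{12d}$ for $|p^{9-s}|<1$. Note there is no comfortable margin: the exponent $12$ is hit exactly (it is why the lemma's constant is $-\tfrac{15}{2}=\tfrac92-12$), so a vague ``fixed power, comfortably below $12$'' cannot substitute for the actual coefficient estimate. Without proving (or at least correctly citing) the bound $|a_i|\le p^{11d}$, your argument is incomplete precisely at the one nontrivial step.

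A secondary issue is your proposed reduction from general $B\in R_3(\Q_p)$ to integral $B$ via Lemma \ref{degp}(2). That equivariance reads $w^{\mu,s}_{B}(m\cdot\phi)=\mu(\nu(m))^{-1}|\nu(m)|^{-1}w^{\mu,s}_{m\cdot B}(\phi)$, so applying it replaces the test vector $\varepsilon_s$ by the translate $m\cdot\varepsilon_s$, which is no longer the $K_p$-fixed section to which the identity $\textbf{w}_B^{1,s-9}(\varepsilon_s)=b(B,s)$ applies; as stated the reduction does not go through. It is also unnecessary: Karel's $b(B,s)$ and the polynomial $f_B$ are defined for all $B\in R_3(\Q_p)$ (this is how the paper uses them), and for $B$ outside the dual lattice the integral of the right $K_p$-invariant section $\varepsilon_s$ against $\overline{\psi_B}$ vanishes, so the estimate is trivial there. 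Keep your first displayed identity, drop the $M(\Q_p)$-translation step, and supply the explicit degree and coefficient bounds for $f_B$; then the proof coincides with the paper's.
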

\begin{proof} 
In the proof of Lemma 9.3 of \cite{KY}, we have
$f_B(X)=X^d+a_1X^{d-1}+\cdots +a_d$, where $p^{-d}=|det(B)|_p$ and $|a_i|\leq p^{11d}$ for all $i$.
Hence if $Re(s)>9$, $|f_B(p^{9-s})|\leq p^{12d}$.
So if ${\rm Re}(s)>9$, our estimate holds.
\end{proof}

\subsection{Degenerate Whittaker functions: the archimedean case}\label{dwa} 
Recall that $\textbf{e}(x)=e_\infty(x)=e^{2\pi \sqrt{-1}x}$ for $x\in \R$ which gives an additive character of $\R$. 
For $B\in \frak J(\R)_{>0}$ and $\ell\in \Z$, we define 
$$W^{(\ell)}_B(g):=\det(B)^{\frac{\ell}{2}}\textbf{e}((B,g(E\sqrt{-1})))j(g,E\sqrt{-1})^{-\ell},\ g\in \bold G(\R).
$$
By using the properties of the factor of automorphy $j(g,Z)$ and Lemma \ref{useful}-(4), one can check that 
\begin{equation}\label{whi}
W^{(\ell)}_B(p_{Z'}m g k)=\textbf{e}((B,Z'))\sgn(\nu(m))^\ell W^{(\ell)}_{m\cdot B}(g)j(k,E\sqrt{-1})^{-\ell} 
\end{equation}
for $Z'\in \frak J(\R),\ m\in \bold M(\R), g\in \bold G(\R)$, and $k\in K_\infty $ where $K_\infty={\rm Stab}_{\bold G(\R)}(E\sqrt{-1})$ is the maximal compact subgroup of $\bold G(\R)$. Note that $K_\infty$ is the compact form of the exceptional group $E_6$.  
Note that $m\cdot B$ stands for $mp_Bm^{-1}=p_{m\cdot B}$ and 
the readers should not confuse with $mB$ which means $m\in {\rm Aut}(\frak J)$ as in the definition of $\bold M\simeq GE_{6,2}$. 

Similarly for ${\rm SL}_2(\R)$, we consider the degenerate Whittaker functions as follows. 
For $\ell\in \Z$ and $r\in \R_{>0}$, define 
$$W^{(\ell)}_r(h):=r^{\frac{\ell}{2}}\textbf{e}(r(h\sqrt{-1}))j_{{\rm SL}_2}(h,\sqrt{-1})^{-\ell},\ h\in {\rm SL}_2(\R)$$
where $j_{{\rm SL}_2}(h,\tau):=c\tau +d$ is the canonical automorphic factor for 
$h=
\left(\begin{array}{cc}
a& b\\
c& d 
\end{array}
\right)\in 
{\rm SL}_2(\R)$ and $\tau\in \mathbb{H}$. 
It is easy to see that 
\begin{equation}\label{sl2whi}
W^{(\ell)}_r\left(\left(\begin{array}{cc}
1& z'\\
0& 1 
\end{array}
\right)\left(\begin{array}{cc}
a& 0\\
0& a^{-1} 
\end{array}
\right)\cdot h\cdot k \right)=\textbf{e}(rz')\sgn(a)^\ell W^{(\ell)}_{a^2 r}(h)j_{{\rm SL}_2}(k,\sqrt{-1})^{-\ell}
\end{equation}
for $z'\in \R,\ a\in \R^\times, h\in {\rm SL}_2(\R)$, and $k\in SO_2(\R)$. 
Under the natural identification $\bold H(\R)\simeq {\rm SL}_2(\R)$, note that if we put $\tau:=h\sqrt{-1}$, 
\begin{equation}\label{autop}
j(h,E\sqrt{-1})=j_{{\rm SL}_2}(h,\sqrt{-1}).
\end{equation} 

\subsection{Degenerate Whittaker functions: the global case} 
For each finite prime $p$, suppose that 
we have a degenerate principal series $I(s_p,\mu_p)$ with the following conditions:  
\begin{enumerate}
\item $s_p=0$ or $s_p=1$; 
\item $s_p=0$ for almost all $p$;
\item $\mu_p=1$ whenever $s_p=1$.
\end{enumerate} 

Put $\mu_\f:=\otimes'_{p<\infty} \mu_p$ which gives rise to a character of the finite ideles $\A^\times_{\f}$. 
Let $S_{\mu_\f}$ be the set of all primes $p$ such that $s_p=1$. Note that $S(\mu_\f)$ is a finite set by the definition.  
Define the degenerate principal series $\otimes_p I(s_p,\mu_p)$ of $\bold G(\A_{\textbf{f}})$ in a similar way as in the $p$-adic case. 
For $B\in R_3(\Q)$ and $\phi=\otimes'_{p} \phi_p\in \bigotimes_{p\not\in S(\mu_\f)\cup\{\infty\}} I(0,\mu_p) \otimes \bigotimes_{p\in S(\mu_\f)} A(|\cdot|_p)$, put  
$$w^{\mu_\f}_B(\phi)=\prod_{p\notin S(\mu_\f)\cup \{\infty\}} w^{\mu_p}_B(\phi_p)\times \prod_{p\in S(\mu_\f)} w^{|\cdot|_p}_B(\phi_p).
$$
Let us define 
$$A(\mu_\f):=\bigotimes_{p\not\in S(\mu_\f)\cup\{\infty\}} I(0,\mu_p)\otimes\bigotimes_{p\in S(\mu_\f)} A(|\cdot|_p).
$$
Then $A(\mu_\f)$ is a subrepresentation of $\otimes_p I(s_p,\mu_p)$ and it turns out to be an admissible generic representation of 
$\bG(\A_\f)$ by the following proposition.  

\begin{prop}\label{whi-glo} For any $B\in R_3(\Q)$, the restriction of the global 
Whittaker functional $w^{\mu_\f}_B(\phi)$ to $A(\mu_\f)$ is non-zero and ${\rm dim}\ {\rm Wh}_B(A(\mu_\f))=1$. 
\end{prop}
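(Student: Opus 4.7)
The plan is to reduce Proposition~\ref{whi-glo} to the local results in Proposition~\ref{irr-uni} and Lemma~\ref{degp}, together with a standard unramified computation that guarantees the infinite product defining $w^{\mu_\f}_B$ is essentially finite.

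For the uniqueness assertion $\dim {\rm Wh}_B(A(\mu_\f)) = 1$, I would first invoke Proposition~\ref{irr-uni} to conclude that each local factor $I(0,\mu_p)$ for $p \notin S(\mu_\f)\cup\{\infty\}$ and $A(|\cdot|_p)$ for $p \in S(\mu_\f)$ is an irreducible admissible representation of $\bG(\Q_p)$ whose space of $\psi_B$-Whittaker functionals is one-dimensional. Consequently $A(\mu_\f)$ is an irreducible admissible representation of $\bG(\A_\f)$, and Flath's theorem on the factorization of Hom spaces for restricted tensor products of irreducible admissible representations gives
$$\dim {\rm Wh}_B(A(\mu_\f)) \;=\; \prod_p \dim {\rm Wh}_B(\Pi_p) \;=\; 1,$$
where $\Pi_p$ denotes the local factor at $p$.

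For non-vanishing, the strategy is to exhibit a single pure tensor $\phi = \otimes_p \phi_p$ on which $w^{\mu_\f}_B$ does not vanish. Let $S$ be the finite set of primes consisting of those in $S(\mu_\f)$, those dividing $\det(B)$, and those at which $\mu_p$ is ramified. At every $p \notin S$ I would take $\phi_p = \phi_p^\circ$ to be the normalized spherical vector; the crucial claim is then $w^{\mu_p}_B(\phi_p^\circ) = 1$. Granting this, at each $p \in S$ one may choose $\phi_p$ freely, and by Lemma~\ref{degp}(1) we can arrange each of these finitely many local factors to be non-zero. The global product $\prod_p w^{\mu_p}_B(\phi_p)$ then collapses to a finite non-zero product.

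The main obstacle is therefore the unramified computation $w^{\mu_p}_B(\phi_p^\circ) = 1$ for $p \notin S$. This is carried out by unwinding the Jacquet integral $\textbf{w}^{\mu_p}_B(\phi_p^\circ)$ via the Bruhat/Iwasawa decomposition of $\bG(\Q_p)$ and identifying the result with the $\mu_p$-twisted Siegel series $b(B,9)$; the identity $\gamma(s) b(B,s) = f_B(p^{9-s})$ recalled in Section~\ref{degenerate}, together with $f_B \equiv 1$ for $p \nmid \det(B)$, then shows that the normalization factors $|\det(B)|^{9/2}_p \prod_{j=0}^{2} L(9-4j,\mu_p)$ in (\ref{nw}) are designed precisely to cancel the $\gamma(9)$ and determinant contributions and produce $1$. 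This is the exceptional analogue of the classical unramified Whittaker formula; the case $\mu_p = 1$ is covered by the level-one computation in \cite{KY}, and the twist by an unramified character $\mu_p$ is a formal extension because $\mu_p$ intervenes only through the $\bold M(\Q_p)$-action, cf.\ Lemma~\ref{degp}(2).
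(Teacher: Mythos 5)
Your argument is correct and follows essentially the paper's own route: the paper's proof consists of the single remark that the proposition ``follows from Proposition~\ref{irr-uni} and Lemma~\ref{degp}'', i.e.\ local one-dimensionality of the Whittaker spaces plus local non-vanishing (including on the submodule $A(|\cdot|_p)$), which is exactly your reduction. The additional details you supply --- the Flath-type factorization of ${\rm Wh}_B$ for the restricted tensor product and the unramified evaluation $w^{\mu_p}_B(\phi_p^\circ)=1$ via the Siegel series relation $\gamma(s)b(B,s)=f_B(p^{9-s})$ (which is also what makes the infinite product defining $w^{\mu_\f}_B$ well defined) --- are precisely what the paper leaves implicit.
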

\begin{proof} It follows from Proposition \ref{irr-uni} and Lemma \ref{degp}. 
\end{proof}

\subsection{Automorphic forms on $\bG(\A_\Q)$}

Let $\psi$ be a non-trivial additive character of $\Q\bs\A_\Q$ 
so that $\psi_\infty=\textbf{e}_\infty$ and put $\psi_B(z)=\psi((B,z))$ for $B,z\in \frak J(\A_\Q)$. 
We factor $\psi_B$ as $\psi=\psi_{B,\f}\psi_{B,\infty}$. 
Let $\ell$ be a non-negative integer. 
For a smooth function $F: \bold N(\Q)\bs \bG(\A_\Q)\lra \C$ and $B\in \frak J(\Q)\simeq \bold N(\Q)$ we define by  
$$W_B(g,F):=\int_{\bold N(\Q)\bs \bold N(\A_\Q)}F(p_z\cdot g)\overline{\psi_B(z)}dz, g\in \bG(\A_\Q)
$$
the $B$-th Fourier coefficient of $F$. We say that $F$ admits ``the holomorphic Fourier expansion of the weight $\ell$" if $F$ takes the following form 
$$F(g)=\sum_{B\in \frak J(\Q)_{>0}}\textbf{w}_B(g_\f,F)W^{(\ell)}_B(g_\infty),\ g=g_\f\cdot g_\infty\in \bG(\A_\Q)$$
which is absolutely and uniformly convergent on any compact neighborhood of $g$. Here $\textbf{w}_B(g_\f,F)$ is 
just a $\C$-valued function on $\bG(\A_\f)$ attached to each $B$ but its relation to the Whittaker functionals will be given as 
follows: 
We denote by $\mathcal{A}_\ell(\bold P(\Q)\bs \bG(\A_Q))$ the space of all smooth functions on $\bold P(\Q)\bs \bG(\A_Q)$ 
which admit the holomorphic Fourier expansion of the weight $\ell$.  

\begin{lemma}\label{rel-fc}
If $I:A(\mu_\f)\lra \mathcal{A}_\ell(\bold P(\Q)\bs \bG(\A_Q))$ is an intertwining operator, then there exists 
a complex number $c_B$ for each $B\in \frak J(\Q)_{>0}$ such that 
$$I(\phi)(g)=\sum_{B\in \frak J(\Q)_{>0}}c_B\cdot w^{\mu_\f}_B(g_\f\cdot \phi)W^{(\ell)}_B(g_\infty)
$$
for all $\phi\in A(\mu_\f)$, $g=g_\f\cdot g_\infty\in \bG(\A_\Q)=\bG(\A_\f)\bG(\R)$ and 
\begin{equation}\label{cB-rel}
c_{m\cdot B}=\nu_\f(\nu(m))^{-1}\sgn(a)^\ell c_B
\end{equation}
for $m\in \bold M(\Q)$. Conversely for complex numbers $\{c_B\}_{B\in \frak J(\Q)_{>0}}$ satisfying $(\ref{cB-rel})$, if the formal series defined by 
$$\mathcal{F}(g;\phi):=\sum_{B\in \frak J(\Q)_{>0}}c_B\cdot w^{\mu_\f}_B(g_\f\cdot \phi)W^{(\ell)}_B(g_\infty),\ g\in \bold G(\A_\Q)
$$
converges absolutely for any $\phi\in A(\mu_\f)$, then $\mathcal{F}(g;\ast)$ defines an intertwining operator from 
$A(\mu_\f)$ to $\mathcal{A}_\ell(\bold P(\Q)\bs \bG(\A_Q))$.  

\end{lemma}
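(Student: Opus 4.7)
My plan is to exploit the holomorphic Fourier expansion of elements in $\mathcal{A}_\ell(\bold P(\Q)\bs\bG(\A_\Q))$ together with the uniqueness of degenerate Whittaker functionals from Proposition \ref{whi-glo}. The argument splits into a forward direction (extracting the coefficients $c_B$ from $I$) and a converse direction (verifying that the formal series built from admissible coefficients defines an automorphic intertwining operator).

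For the forward direction, I would fix $B\in \frak J(\Q)_{>0}$ and study the functional $L_B(\phi):=\int_{\bold N(\Q)\bs \bold N(\A)} I(\phi)(p_z)\overline{\psi_B(z)}\,dz$ on $A(\mu_\f)$. A short change-of-variable computation, using that $\bold N$ is abelian and that the Haar measure on $\bold N(\Q)\bs\bold N(\A)$ is invariant under left translation by $\bold N(\A_\f)$, shows $L_B(n_0\cdot\phi)=\psi_{B,\f}(n_0)L_B(\phi)$ for $n_0\in\bold N(\A_\f)$; thus $L_B\in {\rm Wh}_B(A(\mu_\f))$. By Proposition \ref{whi-glo} this space is one-dimensional, so there is a unique $c_B\in\C$ with $L_B=c_B\cdot w^{\mu_\f}_B$. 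Applying the $\bG(\A_\f)$-intertwining property of $I$ upgrades this to $\textbf{w}_B(g_\f,I(\phi))=c_B\cdot w^{\mu_\f}_B(g_\f\cdot\phi)$ for all $g_\f$, yielding the desired expansion. Relation (\ref{cB-rel}) will then be derived by comparing the Fourier expansions of $I(\phi)(g)$ and $I(\phi)(mg)$ for $m\in \bold M(\Q)$, using the $\bold M$-covariance of $w^{\mu_p}_B$ from Lemma \ref{degp}(2) at each finite place together with the transformation law (\ref{whi}) at infinity, and equating $B$-coefficients after the substitution $B\mapsto m\cdot B$.

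For the converse, assume $\{c_B\}$ satisfies (\ref{cB-rel}) and $\mathcal{F}(g;\phi)$ converges absolutely. The $\bold N(\Q)$-invariance is immediate: a translation by $p_z$ with $z\in \frak J(\Q)$ multiplies the $B$-th summand by $\psi_B(z)=\prod_v\psi_v((B,z))$, which equals $1$ by the product formula for $\psi$ since $(B,z)\in\Q$. The $\bold M(\Q)$-invariance is obtained by running the forward-direction covariance calculation in reverse; condition (\ref{cB-rel}) supplies precisely the factor needed to cancel the combined transformation of $w^{\mu_\f}_B$ and $W^{(\ell)}_B$. Commutation with the right action of $\bG(\A_\f)$ is built into the definition since $w^{\mu_\f}_B$ is already evaluated on $g_\f\cdot\phi$, and the shape of each term places the output in $\mathcal{A}_\ell(\bold P(\Q)\bs\bG(\A_\Q))$.

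The main obstacle I anticipate lies in verifying that $L_B$ is well-defined on $\bold N(\Q)\bs\bold N(\A)$ and genuinely lands in ${\rm Wh}_B(A(\mu_\f))$: one needs the $\bold N(\Q)$-invariance of $I(\phi)$ (from $I(\phi)\in\mathcal{A}_\ell$), the abelian structure of $\bold N$, and a careful identification of $A(\mu_\f)$ as the correct $\bG(\A_\f)$-module on which to apply the one-dimensionality of Proposition \ref{whi-glo}. Once these structural points are secured, the uniqueness of the Whittaker functional pins down $c_B$, and the remainder of the argument is essentially bookkeeping with the local covariance formulas at each place together with the product formula for $\psi$.
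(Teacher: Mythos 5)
Your proposal is correct and follows essentially the same route as the paper: extract the $B$-th Fourier--Whittaker functional of $I(\phi)$, use the one-dimensionality from Proposition \ref{whi-glo} to get $c_B$, use the intertwining property to pass to general $g_\f$, deduce (\ref{cB-rel}) from left $\bold P(\Q)$-invariance together with (\ref{whi}) and Lemma \ref{degp}-(2), and verify the converse directly. The only step you elide is the identification of your integral functional $L_B$ (and its $g_\f$-translates) with the expansion coefficients $\textbf{w}_B(g_\f,I(\phi))$ up to the nonzero factor $\det(B)^{\frac{\ell}{2}}\textbf{e}((B,E\sqrt{-1}))$ --- this is the term-by-term orthogonality computation (\ref{fw}) in the paper, justified by the smoothness of $I(\phi)$ and the uniform convergence built into the definition of $\mathcal{A}_\ell$, and it is needed to pass from $L_B=c_B\, w^{\mu_\f}_B$ to the claimed expansion.
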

\begin{proof} The converse is obvious. For each $F\in \mathcal{A}_\ell(\bold P(\Q)\bs \bG(\A_Q))$ there exists a compact open subgroup $U$ of $\frak J(\A_\f)$ 
such that 
$$F(p_{zu}g)=F(p_{z}g),\ z\in \frak J(\A_\Q),\ u\in U,\ {\rm and}\ g\in \bG(\A_\Q)
$$
by the smoothness of $F$. Since $F$ is also left $\bold N(\Q)$-invariant and $\frak J(\Q)\bs U=\frak J(\Q)\bs \frak J(\A_\f)$, 
we have that $F(p_{z}g)=F(p_{z_\infty}g)$ for $z\in \frak J(\A_\Q)$ and $g\in\bG(\A_\Q)$.  
By using this and (\ref{whi}) it follows for $F\in \mathcal{A}_\ell(\bold P(\Q)\bs \bG(\A_Q))$ that  
\begin{eqnarray}\label{fw}
W_B(g_\f,F)&=&\ds\int_{\bold N(\Q)\bs \bold N(\A_\Q)}F(p_z\cdot g_\f)\overline{\psi_B(z)}dz=
\ds\int_{\bold N(\Q)\bs \bold N(\A_\Q)}F(p_{z_\infty}\cdot g_\f)\overline{\psi_B(z)}dz \nonumber \\ 
&=&\ds\int_{\bold N(\Q)\bs \bold N(\A_\Q)}\Big(
\sum_{B'\in \frak J(\Q)_{>0}}\textbf{w}_{B'}(g_\f,F)W^{(\ell)}_{B'}(p_{z_\infty})\Big)\overline{\psi_B(z)}dz \\
&=& \ds\int_{\bold N(\Q)\bs \bold N(\A_\Q)}\Big(
\sum_{B'\in \frak J(\Q)_{>0}}\textbf{w}_{B'}(g_\f,F)\det(B')^{\frac{\ell}{2}} \textbf{e}((B',\textbf{i}))
\textbf{e}((B',z_\infty)))\Big)\overline{\psi_{B}(z)}dz \nonumber \\
&=& \textbf{w}_B(g_\f,F)\det(B)^{\frac{\ell}{2}} \textbf{e}((B,E\sqrt{-1})) \nonumber. 
\end{eqnarray}
In particular, for $z_\f\in \frak J(\A_\f)$ 
$$\textbf{w}_B(p_{z_\f}g_\f,F)=\psi_{B,\f}(z_\f)\textbf{w}_B(g_\f,F).$$ 
Therefore the functional $\phi\mapsto \textbf{w}_B(1,I(\phi))$ is an element in ${\rm Wh}_B(A(\mu_\f))$. 
By Proposition \ref{whi-glo} there exists a complex number $c_B$ such that $\textbf{w}_B(1_\f,I(\phi))=c_B w^{\mu_\f}_B(\phi)$ 
and we have $\textbf{w}_B(g_\f,I(\phi))=\textbf{w}_B(1_\f,I(g_\f\cdot \phi))=c_B w^{\mu_\f}_B(g_\f \cdot \phi)$. 
The last claim follows from the left invariance by $\bold P(\Q)$ with (\ref{whi}) and Lemma \ref{degp}-(2). 
\end{proof}

\section{Automorphic forms on $SL_2(\A_\Q)$}\label{sl2} 
For a character $\mu_p$, let $I_1(s,\mu_p)=Ind_{B(\Bbb Q_p)}^{SL_2(\Bbb Q_p)}\, \mu_p |\cdot |^s$ be the principal series.
When $\mu_p=1$ and $s=1$, it has a unique subrepresentation $A_1(|\cdot|_p)$. If $\mu_p$ is a quadratic character, $I_1(0,\mu_p)$ is a sum of two irreducible representations.
Let $\mu$ be a unitary character of $\Q^\times\backslash\A^\times$. 
Let $\mathbb{K}=SL_2(\widehat{\Z})\times SO_2(\Bbb R)$ be the standard maximal compact subgroup of $SL_2(\A)$. 
We define the space $I_1(s,\mu)$ consisting of any smooth, $\mathbb{K}$-finite 
function $\phi:{\rm SL}_2(\A)\lra \C$ such that 
$$\phi(pg)=\delta^{\frac{1}{2}}_B(p)|a|^s_{\A}\mu(\nu(p))\phi(g),
$$
 for any $p=
\left(\begin{array}{cc}
a & b \\
0 & a^{-1} 
\end{array}
\right)\in B(\A)$ and any  $g\in {\rm SL}_2(\A)$. 
Here $B$ is the Borel subgroup of ${\rm SL}_2$ which consists of upper-triangular matrices and 
$\delta^{\frac{1}{2}}_B(p)=|a|_\A$. We factor it as $I_1(s,\mu)\simeq \otimes'_p I_{1}(s,\mu_p)$ for $\mu=\otimes'_p \mu_p$. 
We often drop ``$p$" from the notation $I_{1}(s,\mu_p)$ when we are in a local situation. 

Let $\ell$ be a positive integer, and let $f$ be a newform in $S^{\rm new}_\ell(\mathbb{H})^{{\rm ns}}$ with the central character 
$\chi$. [In classical language, it belongs to $S_\ell(\Gamma_0(N),\chi)\subset S_\ell(\G_1(N))$ for some positive integer $N$ and a finite order character $\chi:\G_0(N)\lra \C^\times, 
\left(\begin{array}{cc}
a & b\\
c & d 
\end{array}
\right)\mapsto \chi(d)
$.] 
We need to assume 
\begin{equation}\label{parity}
\chi(-1)=(-1)^\ell
\end{equation}
for $f$ to exist.
To such a $f$, we can associate 
an automorphic form $\phi_f:SL_2(\Q)\bs SL_2(\A_\Q)\lra \C$ by 
\begin{equation}\label{cl-ad}
\phi_f( \gamma\cdot u\cdot h_\infty)=f(h_\infty \sqrt{-1})j_{{\rm SL}_2}(h_\infty,\sqrt{-1})^{-\ell}
\end{equation}
for  $\gamma\in {\rm SL}_2(\Q)$, $u\in K_1(N)$, and $h_\infty\in {\rm SL}_2(\R)$ where $K_1(N)$ is the subgroup of $SL_2(\widehat{\Z})$ consisting of the elements 
so that they are congruent to $\left(\begin{array}{cc}
1 & \ast\\
0 & 1 
\end{array}
\right)$ modulo $N$. 
Let $\pi'=\otimes'_p\pi'_p$ be the automorphic representation of $GL_2(\A_\Q)$ associated to $f$. Let $S_f$ be the set of all finite places $p$ so that $\pi'_p={\rm St}_{GL_2}$ or its twist. Then by assumption, if $p\notin S_f\cup\{\infty\}$, up to twist 
$\pi'_p\simeq \pi(\mu_{1p},\mu_{2p})$
for some (unitary) characters $\mu_{ip}:\Q^\times_p \lra \C^\times,\ i=1,2$ so that either of $\mu_{ip},i=1,2$ is unramified (cf. Proposition 2.8 of \cite{LW}). 

Let $\pi$ be the cuspidal automorphic representation of ${\rm SL}_2(\A_\Q)$ associated to $\phi_f$. 
We factorize it as $\pi=\pi_\f\otimes \pi_\infty=\otimes'_{p}\pi_p$. 
Then we see that 
$$\pi_\f\simeq \bigotimes_{p\not\in S_{\bf{f}}\cup\{\infty\}} \pi_p\otimes \bigotimes_{p\in S_{\bf{f}}} A_1(|\cdot|)=:A_1(\pi_\f)
$$ 
where $\mu_p:=\mu_{1p}\mu^{-1}_{2p}$, and $\pi_p$ is an irreducible direct summand of $I_1(0,\mu_p)$.
Define $\mu_\f:\A^\times_\f\lra \C^\times$ by 
$$\mu_\f=\bigotimes_{p\not\in S_f\cup\{\infty\}} \mu_p\otimes \bigotimes_{p\in S_f}|\cdot|_p.
$$
The Whittaker-Fourier expansion of $\phi_f$ takes a form 
\begin{equation}\label{ct}
\phi_f(h)=\sum_{r\in \Q_{>0}}c_r w^{\mu_\f}_r(h_\f (\phi_f)_\f)W^{(\ell)}_r(h_\infty),
\end{equation}
where $c_r=c_n(f)$ when $r=n$, which we will show below.
%As is turned out $c_T$ and $w^{\mu_\f}_T(h_\f (\phi_f)_\f)$ are both depending on the choice of $\mu_{1,p}$ and $\mu_{2,p}$. However the product $c_Tw^{\mu_\f}_T(h_\f (\phi_f)_\f)$ is independent of that choice. 
Since $\phi_f(h)$ is left invariant by the action of ${\rm SL}_2(\Q)$, by (\ref{sl2whi}) and (\ref{sl2whip}), we have a property 
which plays an important role: 
\begin{equation}\label{rel-imp}
c_{a^2r}=c_r \mu_\f(a)^{-1}\sgn(a)^\ell
\end{equation}
for all $a$, $r\in \Q_{>0}$.   
The coefficients $\{c_r\}_{r\in \Q_{>0}}$ have the following distinguished properties:
\begin{prop}\label{dist} For any $\Phi\in A_1(\pi_\f)$ and  $\eta\in \Q_{>0}$, the series  
$$F(\Phi,\{c_{\eta t}\}_t):=\sum_{r\in \Q_{>0}}c_{\eta r}  w^{\mu_\f}_r(h_\f \cdot\Phi)W^{(\ell)}_r(h_\infty)
$$
is absolutely convergent and it defines a cuspidal automorphic form on $SL_2(\A_\Q)$. 
\end{prop}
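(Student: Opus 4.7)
The plan is to verify absolute convergence via standard estimates, and then to deduce cuspidal automorphy by $SL_2(\A_\f)$-equivariance combined with an explicit construction of $F$ as a cusp form.

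For convergence, I would combine three inputs: the Hecke bound $|c_r|=O(r^{\ell/2+\varepsilon})$ for the Fourier coefficients of $\phi_f$, which transfers to the family $\{c_{\eta r}\}_r$ via (\ref{rel-imp}) and the unitarity of $\mu_\f$; an $SL_2$-analog of Lemma \ref{fc1}, bounding $|w^{\mu_\f}_r(h_\f\cdot\Phi)|$ by a Schwartz function of $r\in\A_\f$ (so $r$ is forced to lie in a fractional ideal depending only on the level of $\Phi$ and a compact neighborhood of $h_\f$); and the Gaussian decay $|W^{(\ell)}_r(h_\infty)|\le r^{\ell/2}e^{-2\pi r\,{\rm Im}(h_\infty\sqrt{-1})}$ at infinity. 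Together these produce a uniformly summable majorant on compacta.

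For automorphy, left invariance of $F$ under $N_1(\Q)$ is immediate from $\psi|_\Q=1$. Invariance under the diagonal torus $T(\Q)$ follows by combining the local functional equation $w^{\mu_p}_r(t(a)\phi_p)=\mu_p(a)^{-1}w^{\mu_p}_{a^2r}(\phi_p)$ (for $a\in\Q_p^\times$) with the archimedean rule (\ref{sl2whi}): after substituting $r\mapsto a^2r$ in the sum, the accumulated factor $\mu_\f(a)^{-1}\sgn(a)^\ell$ is exactly compensated by (\ref{rel-imp}) applied to $\eta r$. The $SL_2$-analog of Lemma \ref{rel-fc} then promotes this to the statement that $\Phi\mapsto F(\Phi,\{c_{\eta t}\}_t)$ is an $SL_2(\A_\f)$-intertwiner $A_1(\pi_\f)\to\mathcal{A}_\ell(B(\Q)\backslash SL_2(\A_\Q))$.

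To upgrade $B(\Q)$-invariance to full $SL_2(\Q)$-invariance, I exploit that this intertwiner is $SL_2(\A_\f)$-equivariant and that $A_1(\pi_\f)$ is $SL_2(\A_\f)$-irreducible: it then suffices to exhibit $F(\Phi_0,\{c_{\eta t}\}_t)$ as an honest cusp form for one nonzero $\Phi_0$, after which equivariance and the stability of cusp forms under right translation cover the whole orbit. Taking $\Phi_0=(\phi_f)_\f$ and unwinding (\ref{cn}), for $\eta\in\Z_{>0}$ the classical translation of $F((\phi_f)_\f,\{c_{\eta t}\}_t)$ is the Hecke $U_\eta$-translate of $f$, which is a cusp form of possibly higher level. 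For general $\eta=m/n\in\Q_{>0}$ in lowest terms I would factor the shift $c_r\mapsto c_{\eta r}$ prime-by-prime, using at each unramified $p$ the identity $U_p=T_p-p^{\ell-1}V_p$ relating the $U_p$-operator to the Hecke operator $T_p$ and the level-raising operator $V_p$, and at each $p\in S_{\bf{f}}$ a direct computation on the Steinberg quotient $A_1(|\cdot|_p)$. Assembling these local pieces exhibits $F((\phi_f)_\f,\{c_{\eta t}\}_t)$ as a finite $\C$-linear combination of right translates of $\phi_f$, hence a cusp form; cuspidality itself is automatic from the absence of an $r=0$ term. The principal obstacle I anticipate is the explicit decomposition at primes $p\in S_{\bf{f}}$: there $A_1(|\cdot|_p)$ has no spherical vector and the standard Satake machinery is unavailable, so one must trace through the Iwahori-Hecke action on the Steinberg line by hand and match it against the local Whittaker functional $w^{|\cdot|_p}_r$ computed in Section \ref{sl2}.
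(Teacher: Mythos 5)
Your overall skeleton agrees with the paper's: convergence by coefficient and Whittaker estimates, and the reduction of a general $\Phi\in A_1(\pi_\f)$ to the single generator $(\phi_f)_\f$ via $SL_2(\A_\f)$-equivariance of $\Phi\mapsto F(\Phi,\{c_{\eta t}\}_t)$ (the paper phrases this as: the series is a linear combination of translates of $\phi_{f,\eta}$ because $(\phi_f)_\f$ generates $A_1(\pi_\f)$). The gap is at the crux, namely your identification of $F((\phi_f)_\f,\{c_{\eta t}\}_t)$ with the $U_\eta$-translate of $f$. That identification is false: the $n$-th Whittaker coefficient of your series is $c_{\eta n}$ paired with the local values $w^{\mu_p}_n(\phi_{f,p})$ \emph{at index $n$}, whereas unwinding (\ref{cn}) for $U_\eta f$ gives $a_f(\eta n)$, i.e.\ $c_{\eta n}$ paired with the local values $w^{\mu_p}_{\eta n}(\phi_{f,p})$ at index $\eta n$. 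At a prime $p\mid\eta$ the ratio $w^{\mu_p}_{\eta n}(\phi_{f,p})/w^{\mu_p}_{n}(\phi_{f,p})$ depends on ${\rm ord}_p(n)$, so the two functions are not proportional, and it is not clear that any finite combination built from the classical identities $U_p=T_p-p^{\ell-1}V_p$ reproduces the coefficient shift $c_r\mapsto c_{\eta r}$ against the \emph{fixed} finite Whittaker data; the difficulty you yourself flag at the Steinberg primes is a symptom that this route, as described, does not close.

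What the paper does instead is both simpler and uniform in $\eta$: lift to the cuspidal $GL_2(\A_\Q)$-form $\varphi_f$ whose restriction to $SL_2(\A_\Q)$ is $\phi_f$, and right-translate by the single finite-adelic element $\diag(\eta^{-1},1)$. A short computation with the Whittaker expansion shows that the restriction to $SL_2(\A_\Q)$ of this translate equals, after re-indexing $r\mapsto\eta r$, exactly $\mu_\f(\eta)^{-1}\eta^{\ell/2}F((\phi_f)_\f,\{c_{\eta t}\}_t)$. Left $GL_2(\Q)$-invariance and cuspidality of $\varphi_f$ survive restriction and right translation, so automorphy and cuspidality of the shifted series are immediate for every $\eta\in\Q_{>0}$, with no separate treatment of $p\in S_{\bf f}$ and no $B(\Q)$-to-$SL_2(\Q)$ upgrade needed. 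If you want to salvage your plan, the object to match your series with is this single group translate (not a Hecke-averaged operator such as $U_\eta$). Two minor points: the bound you need for convergence is already Proposition \ref{estct} (the paper's $c_n(f)$ is normalized, $|c_n(f)|\le m^{-(\ell-1)/2}$; your Hecke bound refers to the unnormalized $a_f(n)$), and your torus-invariance step is essentially the content of (\ref{rel-imp}) together with the $SL_2$ analogue of Lemma \ref{rel-fc}, which is fine but is subsumed by the $GL_2$ argument above.
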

\begin{proof} The convergence follows from the Whittaker-Fourier expansion and the estimates of $c_{\eta r}$.

In a usual way we can associate to $f$, a cuspidal automorphic form $\varphi_f$ of ${\rm GL}_2(\A_\Q)$ so that 
its restriction to ${\rm SL}_2(\A_\Q)$ coincides with $\phi_f$. 
Then it is easy to see that the restriction of $(\diag(\eta^{-1},1))\varphi_f$ to ${\rm SL}_2(\A_\Q)$ becomes 
$$\phi_{f,\eta}(h):=\ds\sum_{r\in \Q_{>0}}c_{r}\mu_\f(\eta)^{-1}\eta^{\frac{\ell}{2}} w^{\mu_\f}_{\eta^{-1} r}(h_\f (\phi_f)_\f)
W^{(\ell)}_{\eta^{-1} r}(h_\infty).$$ 
From this we have 
\begin{eqnarray}
\phi_{f,\eta}(h)&=& 
\sum_{r\in \Q_{>0}}c_{\eta r}\mu_\f(\eta)^{-1}\eta^{\frac{\ell}{2}} w^{\mu_\f}_{r}(h_\f (\phi_f)_\f)
W^{(\ell)}_{r}(h_\infty)\nonumber  \\
&=&\mu_\f(\eta)^{-1}\eta^{\frac{\ell}{2}}F(\Phi,\{c_{\eta r}\}_r).\nonumber
\end{eqnarray}
Therefore $F(\Phi,\{c_{\eta r}\}_r)$ is automorphic. 
The series can be written 
as a linear combination of the left translations of $\phi_{f,\eta}$ since 
$(\phi_f)_\f$ generates $A_1(\pi_\f)$. Hence we have the claim.   
\end{proof}
To end this section we will explicitly compute $\{c_t\}_{t\in \Q_{>0}}$ for $f$. Let us keep the notations as above. 
Let $f=q+\ds\sum_{n=2}^{\infty} a_f(n)q^n$ be the normalized Fourier expansion. 
We factorize $\varphi_f=\otimes'_p \varphi_p$ as an automorphic form on $GL_2(\A_\Q)$. 
The first task we have to do is to make explicit the relationship between $\varphi_p$ and a unique local newform $f^{{\rm new}}_p$
of $\pi'_p$ defined in \cite{Schmidt}. 
%normalized so that with the normalization took in loc.cit.. 
%Actually the result in \cite{Schmidt} is for $GL_2$ but it is easy to deduce the similar result by restricting the representation to $SL_2$. 

By (\ref{rel-imp}) and the multiplicative property, we have to compute only 
$c_p(f)$ for each prime $p$. We can obtain the necessary other information by using Hecke operators and epsilon factors. 

Let us first consider the case when $\pi'_p=\pi(\mu_{1p},\mu_{2p})$ is unramified. Put $\alpha=\mu_{1p}(p^{-1})$ and $\beta=\mu_{2p}(p^{-1})$ for simplicity. Therefore 
$\mu_p(p)=\ds\frac{\beta}{\alpha}$.  
Note that the convention for the Satake parameters is slightly different from the one in \cite{Schmidt} but we may simply  
replace $(\alpha,\beta)$ with $(\beta,\alpha)$. 

Then by (\ref{cl-ad}) and the observation in Section 8 of \cite{MY} for the relation between 
the global Hecke operators and the local Hecke operators, we see that 
$\varphi_p=f^{{\rm new}}_p$.
By (\ref{cn}),
$a_f(p)=p^{\ell}c_p(f)w^{\mu_p}_p(f^{{\rm new}}_p).
$
Assume that $\alpha\not=-\beta$. Then by Lemma 2.2.1 of \cite{Schmidt} (applied for the additive character $\psi=\psi_p$ with the conductor $c(\psi)=-1$) and (\ref{cn}), we see that 
$$w^{\mu_p}_p(f^{{\rm new}}_p)=p^{-\frac{1}{2}}(1-\ds\frac{\beta}{\alpha}p^{-1})^{-1}(1-\ds\frac{\beta}{\alpha}p^{-1})
(1+\frac{\beta}{\alpha})=p^{-\frac{1}{2}}(1+\frac{\beta}{\alpha}).
$$ 
Since $a_f(p)=p^{\frac{\ell-1}{2}}(\alpha+\beta)$, we have 
$c_p(f)=\alpha p^{\frac{\ell}{2}}$.

When $\alpha=-\beta$, $a_f(p)=0$, and we cannot apply the same method since both sides are zero. 
Instead of local newforms, we use the $p$-stabilized form given by 
$f_{\alpha}(\tau)=f(\tau)-p^{\frac{\ell-1}{2}}\beta f(p\tau)$ (cf. Section 9.1 of \cite{MY}). Then its local component $f_{\alpha,p}$ becomes, in terms of the newform,  
$f_{\alpha,p}=f^{{\rm new}}_p-p^{-\frac{1}{2}}\beta \pi'({\rm diag}(1,p^{-1}))f^{{\rm new}}_p$. 
Then we have 
$$a_f(p)-p^{\frac{\ell-1}{2}}\beta =-p^{\frac{\ell-1}{2}}\beta=p^\ell c_p(f) w^{\mu_p}_p(f_{\alpha,p})
$$ 
since $a_f(p)=0$. By Lemma 2.2.1 of \cite{Schmidt} again, we compute 
\begin{eqnarray}
w^{\mu_p}_p(f_{\alpha,p})&=& p^{-\frac{1}{2}}(1-\ds\frac{\alpha}{\beta}p^{-1})^{-1}\int_{\Q_p}f_{\alpha,p}(\left(\begin{array}{cc}
0 & 1\\
-1 & -z 
\end{array}
\right))\overline{\psi(pz)}dz  \nonumber  \\
&=& -p^{-\frac{1}{2}}(1-\ds\frac{\alpha}{\beta}p^{-1})^{-1}p^{-\frac{1}{2}}\beta\int_{\Q_p}f(\left(\begin{array}{cc}
0 & p^{-1}\\
-1 & -p^{-1}z 
\end{array}
\right))\overline{\psi(pz)}dz \mbox{\ (since $\alpha+\beta=0$)} \nonumber\\
&=& -p^{-\frac{1}{2}}(1-\ds\frac{\alpha}{\beta}p^{-1})^{-1}\alpha\beta\int_{\Q_p}f(\left(\begin{array}{cc}
0 & 1\\
-1 & -z 
\end{array}
\right))\overline{\psi(p^2z)}dz  \nonumber \\
&=&  -p^{-\frac{1}{2}}(1-\ds\frac{\alpha}{\beta}p^{-1})^{-1}\alpha\beta (1-\ds\frac{\alpha}{\beta}p^{-1})(1+\frac{\alpha}{\beta}+\frac{\alpha^2}{\beta^2})=\beta^2 p^{-\frac{1}{2}}.  \nonumber
\end{eqnarray}
Summing up, we have the same result 
$c_p(f)=\alpha p^{-\frac{\ell}{2}}.$

When $\mu_{1p}$ is unramified and $\mu_{2p}$ is ramified, then the similar computation shows that 
$c_p(f)=\ds\frac{\beta^{-1}p^{-\frac{\ell+1}{2}}}{1-p^{-1}}\mu_{2p}(-1)$.
When $\mu_{1p}$ is ramified and $\mu_{2p}$ is unramified, then 
$c_p(f)=\beta p^{-\frac{\ell}{2}}$. Note that in either case, $L(1,\mu_p)=1$ and we have $|c_p(f)|\le p^{-\frac{\ell}{2}}$. 

Next we consider the case when $\pi_p'$ is a twist of the Steinberg representation. 
Since we will factor through $SL_2$, in our purpose we may assume that $\pi_p'$ is a unique submodule of $\pi(|\cdot|^{\frac{1}{2}},|\cdot|^{-\frac{1}{2}})$. 
Then by Atkin-Lehner theory, $a_f(p)=-\varepsilon_p p^{\frac{\ell}{2}-1}$ where $\varepsilon_p$ is the eigenvalue of the 
Atkin-Lehner involution $W_p$. Then by (37) of \cite{Schmidt}, we have 
$c_p(f)=\varepsilon_p p^{-\frac{\ell-1}{2}}(1-p^{-1})$ and it yields $|c_p(f)|\le p^{-\frac{\ell-1}{2}}$. 
Hence we have proved that 
\begin{prop}\label{estct}Keep the notations as above. 
Then $c_n(f)=0$ if $n\not\in \Z_{>0}$ and otherwise $|c_n(f)|\le m^{-\frac{\ell-1}{2}}$ 
where $m$ is the product of all primes $p|n$ so that ${\rm ord}_p(n)$ is odd. 
\end{prop}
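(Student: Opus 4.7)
The plan is to reduce the bound to single-prime estimates by combining (i) vanishing outside $\Z_{>0}$, (ii) the orbit relation~(\ref{rel-imp}) to reduce to the squarefree kernel, (iii) multiplicativity of $n\mapsto c_n(f)$ on coprime positive integers, and (iv) the explicit prime-by-prime bounds established in the paragraphs just above the proposition.

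For~(i): if $n\le 0$, the left-hand integral in~(\ref{cn}) vanishes because $\phi_f$ is cuspidal and the classical Fourier support of $f$ is $\Z_{>0}$. If $n\in\Q_{>0}\setminus\Z$, choose a prime $p$ with ${\rm ord}_p(n)<0$; the Kirillov model of the local newform component of $\pi_p'$ is supported on $\Z_p$, so the local Whittaker integral against $\psi_p(n\,\cdot\,)$ vanishes, hence the global integral does too, forcing $c_n(f)=0$.

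For~(ii): write $n=a^2m$ with $a\in\Z_{>0}$ and $m=\prod_{p\mid n,\ {\rm ord}_p(n)\ {\rm odd}}p$ squarefree. The relation~(\ref{rel-imp}) combined with the unitarity of $\mu_\f$ and $\sgn(a)=1$ gives $|c_n(f)|=|c_m(f)|$, so it suffices to bound $|c_m(f)|$. For~(iii), take a decomposable test vector $\phi=\otimes_p\phi_p$ in~(\ref{cn}): the LHS factors as $\prod_p\int_{N_1(\Q_p)}\phi_p(n(z))\overline{\psi_p(nz)}dz$, while the RHS is already displayed as a product over $p\mid n$. At primes $p\nmid n$ the local integral depends only on the unramified spherical data and can be normalized to $1$; at $p\mid n$ the ratio of the local integral to $w^{\mu_p}_n(\phi_p)$ depends only on the $p$-part of $n$. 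Together with $c_1(f)=1$ (take $n=1$ in~(\ref{cn})), this yields $c_{mn}(f)=c_m(f)c_n(f)$ for coprime $m,n$, and in particular $c_m(f)=\prod_{p\mid m}c_p(f)$ for squarefree $m$.

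Finally~(iv): the case analysis carried out just above the statement gives $|c_p(f)|\le p^{-(\ell-1)/2}$ in every case (with the sharper bound $p^{-\ell/2}$ in the unramified and ramified-principal settings, and $p^{-(\ell-1)/2}$ attained in the Steinberg case). Multiplying over $p\mid m$ gives $|c_m(f)|\le m^{-(\ell-1)/2}$, and~(ii) concludes. The main obstacle is the bookkeeping in~(iii): the prefactors $|n|_p^{1/2}L(1,\mu_p)$ inside $w^{\mu_p}_n(\phi_p)$, the global $n^\ell$, and the parallel functional $w^{|\cdot|_p}_n$ at primes $p\in S_{\bf{f}}$ must all split consistently across primes so that the factorization produces unambiguous multiplicativity.
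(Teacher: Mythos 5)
Your proposal follows the paper's own route: the paper likewise reduces to the single prime values $c_p(f)$ via (\ref{rel-imp}) together with multiplicativity, and then invokes exactly the explicit case-by-case computations of $c_p(f)$ (via Schmidt's local newform formulas) carried out just before the statement, so your steps (ii)--(iv) reproduce its argument, with (i) and (iii) merely making explicit what the paper leaves implicit. One caveat, present equally in the paper's own reduction: $\mu_\f$ as defined includes the non-unitary components $|\cdot|_p$ at $p\in S_{\bf f}$, so the appeal to ``unitarity of $\mu_\f$'' in step (ii) is literally valid only for $a$ prime to the Steinberg places, and at those places the passage from $|c_{a^2m}(f)|$ to $|c_m(f)|$ rests on the same silent normalization the paper uses.
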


\section{Fourier-Jacobi expansions}
As seen in Section 5 of \cite{KY}, 
for each $S\in \frak J_2(\Z)_{+}$, the $S$-th Fourier-Jacobi coefficient of 
a modular form $F$ on $\frak T$ is represented by a finite sum of the products of theta series and elliptic modular forms. 
An idea of Ikeda and Yamana is to interpret this computation in terms of adelic language and check the compatibility of 
local computations and a global computation. 
Recall from Lemma \ref{useful} that 
$X\oplus r=\left(\begin{array}{cc}
X & 0 \\
0 & r
\end{array}
\right)$
and 
$R_{r,x}=v_1(x,0,0)\cdot (X\oplus r)=\left(\begin{array}{cc}
X & Xx \\
{}^t\overline{x}X & r+\sigma_R(x,x)
\end{array}
\right)$.

\subsection{The non-archimedean case} Let $p$ be a rational prime number and $\mu$ a character of $\Q^\times_p$. Recall 
the Weil representation $\omega^\psi_S$ for $S\in R_3(\Q_p)$, and the fixed non-trivial additive character $\psi:\Q_p\lra \C^\times$ 
acting on the Schwartz space $\mathcal{S}(X(\Q_p))$, where $X=\{v_1(x,0,0)\ |\ x\in \frak C^2\}$ is given by 
the new coordinate $v_1$. 

The following is a local analogue of Theorem 7.1 of \cite{KY}: 
\begin{lemma}\label{longeq} For $\phi\in I(s,\mu)$, $\Phi\in \mathcal{S}(X(\Q_p))$ and $h\in \bold H(\Q_p)$, the integral 
$$\beta^\psi_S(h;\phi\otimes \overline{\Phi}):=\prod_{i=0}^2 L(9-4i+s,\mu)\int_{Y(\Q_p)}\int_{Z(\Q_p)}
\phi(\iota\cdot \iota_{e_3}v_1(y,0,z)h)
\overline{(\omega^\psi_S(v_1(y,0,z)h)\Phi)(0)}dydz
$$
is absolutely convergent if $Re(s)>-1$ and it gives an $\bold V(\Q_p)$-invariant and ${\rm SL}_2(\Q_p)$-intertwining map 
$$\beta^\psi_S:I(s,\mu)\otimes \mathcal{S}(X(\Q_p)) \lra I_1(s,\mu).
$$
Hence $\beta^\psi_S(v_1\cdot h\cdot \gamma; \phi\otimes \overline{\Phi})=\gamma\cdot \beta^\psi_S(h; \phi\otimes \overline{\Phi})$ 
for any $\gamma\in {\rm SL}_2(\Q_p)$ and $v_1\in \bold V(\Q_p)$.  
\end{lemma}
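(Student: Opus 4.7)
The plan is to treat this as the non-archimedean analogue of Theorem 7.1 of \cite{KY}, whose global proof (for Eisenstein sections) essentially factors through each local place. Three things must be checked: absolute convergence on $\mathrm{Re}(s)>-1$; right-invariance under $\bold V(\Q_p)$; and the $SL_2(\Q_p)$-equivariance realizing the image inside $I_1(s,\mu)$. The ``long element'' corrected identity
$\iota\cdot v_1(x,y,z)=v_1(0,-\overline{x},0)\cdot \iota \cdot v_1(0,y,z+\sigma(x,y))$
on page 242 of \cite{KY}, together with Lemma \ref{useful}(3)(5) identifying $v_1(0,y,z)$ with an element $p_Z\in\bold N$, is the bridge between the $\bold N$-equivariance of $\phi$ and the Heisenberg computations on $\bold V$; all three verifications ultimately rest on it.

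For convergence, my plan is to fix a maximal compact $K_p$ and use the Iwasawa decomposition of $\iota\cdot \iota_{e_3}\cdot v_1(y,0,z)h$ to write $\phi$ at this point as $\delta_{\bold P}^{1/2}(\text{torus})\,\mu|\cdot|^s(\text{torus})$ times a smooth, compactly supported, $K_p$-finite factor. For fixed $h$, the $y$-variable is absorbed by the Schwartz function $(\omega_S^\psi(v_1(y,0,z)h)\Phi)(0)$, which has compact support in $y$ after we translate by $h$ (since the Weil representation carries $\mathcal{S}(X(\Q_p))$ to itself). The remaining $z$-integration is then of rank-one type: it reduces to the standard degenerate Whittaker integral on the $\bold H\simeq SL_2$-direction, whose convergence for $\mathrm{Re}(s)>-1$ is classical (compare Lemma \ref{fc1} and the Jacquet integral discussion preceding it). Uniformity in $h$ on compacta follows from Lemma \ref{fc1} applied with $C=\{h\}\cdot(\text{compact})$. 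The $L$-factors out front contribute nothing to convergence.

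For the $\bold V(\Q_p)$-invariance, I would exploit the Heisenberg multiplication law $v_1(x',y',z')v_1(y,0,z)=v_1(y+y',x',z+z'+\text{bilinear})$ and change variables $(y,z)\mapsto (y+y',z+z'+\cdots)$. The mismatch introduced is precisely the cocycle that defines $\omega_S^\psi$ on the Heisenberg, and so cancels against the transformation of $(\omega_S^\psi(v_1(y,0,z)h)\Phi)(0)$; the $X$-direction is handled using the long-element identity recalled above together with the left-$\bold N(\Q_p)$-equivariance of $\phi$ (which eats the $v_1(0,-\overline{x},0)$ factor into the character $\psi_S$ and cancels via the Schr\"odinger model formula for $\omega_S^\psi$). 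Finally, the $SL_2(\Q_p)$-intertwining follows by a direct change of variables in $(y,z)$ when $h$ is multiplied on the right by $\mathrm{diag}(a,a^{-1})$ or $n(b)\in\bold H(\Q_p)$: using Lemma \ref{useful}(6) and the fact that the torus of $\bold H$ maps into $\bold M$ with similitude $a^{2}$, the integrand transforms by $\mu(a)^{?}|a|^{s+1}$ exactly so as to match $\delta_B^{1/2}\mu|\cdot|^s$ on $I_1(s,\mu)$; invariance under $\iota_{e_3}$ is automatic from the definition.

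The hard part is the convergence step: the integrand is not obviously Schwartz in the $z$-direction, and one has to combine the Iwasawa decomposition estimate on $\phi$ with the precise behaviour of $\iota\cdot\iota_{e_3}\cdot v_1(0,0,z)$ under Bruhat decomposition to extract enough decay when $|z|_p\to\infty$. This is exactly the point where the hypothesis $\mathrm{Re}(s)>-1$ is needed, matching the convergence range of the analogous $SL_2$ Jacquet integral. Once convergence is in hand, the invariance properties are formal.
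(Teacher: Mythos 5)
Your treatment of the $\bold V(\Q_p)$-invariance and the $SL_2(\Q_p)$-equivariance is consistent in spirit with the paper's proof: the paper simply rewrites the double integral as a single integral over $\bold V(\Q_p)$, namely $\int_{\bold V(\Q_p)}\phi(\iota\,\iota_{e_3}v_1h)\overline{(\omega^\psi_S(\iota_{e_3}v_1h)\Phi)(0)}\,dv_1$, using the (corrected) computation of p.~243 of \cite{KY}, after which both invariances are immediate and membership in $I_1(s,\mu)$ follows from the same kind of computation; your direct change-of-variables plan is that mechanism unwound. Two small slips there: $\phi$ is left-invariant under $\bold N(\Q_p)$ with no character, so the factor $v_1(0,-\overline{x},0)$ simply disappears rather than being ``eaten into $\psi_S$''; and $\nu(h(a))=a$, not $a^{2}$.

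The genuine gap is the convergence step, which you yourself single out as the hard part, and your sketch of it rests on a misreading of the geometry. The variable $z$ being integrated is not one-dimensional, and the $z$-integral is not ``the standard degenerate Whittaker integral on the $\bold H\simeq SL_2$-direction'': by Lemma \ref{useful}(3),(5), $Z\simeq\jt$ is the ten-dimensional centre of the $42$-dimensional group $\bold V$, carrying the character $\psi_S$ with $S$ nondegenerate, while the $SL_2$-coordinate is the corner entry $z'$ of $n(z')$, which is not integrated at all --- it is the variable of the output section. So no classical rank-one $SL_2$ fact applies, and your calibration is off in any case: the $SL_2$ Jacquet integral converges absolutely only for ${\rm Re}(s)>0$; what requires ${\rm Re}(s)>-1$ there (and here) is the finiteness of the normalizing factor $L(1+s,\mu)$. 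The paper's proof says precisely this: convergence of the integral is deduced from the smoothness of the section (after the rewriting over $\bold V$), and the hypothesis ${\rm Re}(s)>-1$ is imposed only so that $\prod_{i=0}^{2}L(9-4i+s,\mu)$ makes sense; your assertions that ``the $L$-factors out front contribute nothing to convergence'' and that ${\rm Re}(s)>-1$ is forced by the $z$-integration invert the actual roles. Moreover, Lemma \ref{fc1} bounds $w^{\mu,s}_B$ as a function of $B$, not this Fourier--Jacobi integral as a function of $h$, so it does not supply the uniformity you invoke. To make your route work you would need an actual decay estimate for $\phi(\iota\,\iota_{e_3}\,v_1(y,0,0)\,p_{z\oplus 0}\,h)$ as $z\to\infty$ in $\jt(\Q_p)$ (a Karel/Yamana-type majorization for the rank-two block), or else first pass to the $\bold V$-integral as the paper does.
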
  
\begin{proof} By the computation in p.243 of \cite{KY} (but a modification is necessary and see the remark below), we have 
$$\int_{Y(\Q_p)}\int_{Z(\Q_p)}
\phi(\iota\cdot \iota_{e_3}v_1(y,0,z)h)
\overline{(\omega^\psi_S(v_1(y,0,z)h)\Phi)(0)}dydz=
\int_{\bold V(\Q_p)} \phi(\iota\cdot \iota_{e_3}v_1 h)\overline{(\omega^\psi_S(\iota_{e_3}v_1 h)\Phi)(0)}dv_1.
$$
The invariance for ${\rm SL}_2(\Q_p)$ and $\bold V(\Q_p)$ is clear from this formula. The convergence follows from 
the smoothness of $f$. The condition ``$Re(s)>-1$" is necessary to make sense of the normalizing factors in front of the integral. 
By a similar computation done in page 243 of \cite{KY} we can check the map takes the image in $I_1(u,\mu)$. 
\end{proof}

For $t\in \Q^\times_p$ and $\phi\in I_1(s,\mu)$, define 
\begin{equation}\label{sl2whip}
w^{\mu,s}_t(\phi)=|t|^{\frac{1}{2}}_p L(1+s,\mu)\int_{z'\in \Q_p} \phi(i(\iota_{e_3})n(z'))\overline{\psi(tz')}dz'
\end{equation}
via the isomorphism $i: \bold H(\Q_p)\simeq {\rm SL}_2(\Q_p)$. 

\begin{lemma}\label{com1} For $\phi\in I(s,\mu), t\in \Q^\times_p$, and $\Phi\in \mathcal{S}(X(\Q_p))$, the equality  
$$w^{\mu,s}_t(\beta^\psi_S(\ast; \phi\otimes \overline{\Phi}))=C(S)|t|^{-4}_p \int_{X(\Q_p)}
\overline{\Phi(x)}w^{\mu,s}_{S\oplus t}(v_1(x,0,0)\cdot \phi)dx
$$
holds, where $C(S)=|\det(S)|^{-\frac{9}{2}}_p$.
\end{lemma}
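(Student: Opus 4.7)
The approach is a direct unwinding of both sides combined with the explicit Weil-representation calculus, following the strategy of Lemma~3.4 of \cite{IY} in the symplectic case.

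\emph{First, the prefactor bookkeeping.} Since $\det(S\oplus t)=t\det(S)$, one has $|\det(S\oplus t)|^{9/2}_p=|t|^{9/2}_p|\det(S)|^{9/2}_p$. Combined with $C(S)|t|^{-4}_p=|\det S|^{-9/2}_p|t|^{-4}_p$, the prefactor on the RHS collapses to $|t|^{1/2}_p\prod_{i=0}^2 L(9-4i+s,\mu)$, which matches exactly the prefactor obtained on the LHS after substituting \eqref{sl2whip} into the definition of $w^{\mu,s}_t$ and replacing $\beta^\psi_S$ by the collapsed integral over $\bold V(\Q_p)$ provided by Lemma~\ref{longeq}. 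Thus the claim reduces to the identity
$$L(1+s,\mu)\int_{\Q_p}\!\int_{\bold V(\Q_p)}\!\phi(\iota\iota_{e_3}v_1\iota_{e_3}n(w))\,\overline{(\omega^\psi_S(\iota_{e_3}v_1\iota_{e_3}n(w))\Phi)(0)\psi(tw)}\,dv_1\,dw=\int_{X(\Q_p)}\!\overline{\Phi(x)}\!\int_{\bold N(\Q_p)}\!\phi(\iota p_z v_1(x,0,0))\overline{\psi_{S\oplus t}(z)}\,dz\,dx.$$

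\emph{Second, the group-theoretic rewriting.} Parametrize $v_1\in\bold V(\Q_p)$ as $v_1(y,0,z)$ with $(y,z)\in Y(\Q_p)\times Z(\Q_p)$ in the coordinates of Lemma~\ref{longeq}. The plan is to convert the triple $(y,z,w)$-integration on the LHS into an $\bold N(\Q_p)$-integration by a Bruhat-type rewriting. The building block is the $SL_2$ identity $\iota_{e_3}n(w)=n(-w^{-1})h(-w^{-1})\iota_{e_3}$, valid on the open cell $w\ne 0$, lifted to $\bold G(\Q_p)$ via $\bold H\hookrightarrow\bold G$. Combining this with the commutation of $\iota_{e_3}$ with $v_1(y,0,z)\in \bold V$ inside $\bold Q=\bold L\bold V$, and using Lemma~\ref{useful}.(2) to record how the resulting $v_1(x,0,0)\in\bold M$ conjugates a bottom-right entry into the off-diagonal $R_{t,x}$-form, one rewrites $\iota\iota_{e_3}v_1(y,0,z)\iota_{e_3}n(w)$ on a dense open set as $\iota\cdot p_{z'}\cdot v_1(x,0,0)\cdot k$ for smooth functions $(z',x,k)$ of $(y,z,w)$. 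The Jacobian of the change of variables $(y,z,w)\mapsto (x,z')$ contributes a precise power of $|t|_p$ and $|\det S|_p$ coming from the weights of the root subgroups.

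\emph{Third, the Weil-representation computation.} Using the standard polarization for $\omega^\psi_S$ on $\mathcal{S}(X(\Q_p))$: Heisenberg elements $v_1(y,0,z)$ act by translation-plus-character in the $y,z$ variables; $n(w)\in\bold H$ acts by multiplication by $\psi_S$ of a quadratic form in $x$; and the crucial $\iota_{e_3}$ acts as a symplectic Fourier transform with kernel $\psi((S x,x'))$ and mass $|\det S|^{1/2}_p$. Evaluating at $x=0$ produces an integral over $X(\Q_p)$ paired against $\overline{\Phi(x)}$; the remaining $z$- and $w$-integrations on the LHS combine Fourier-dually with $\psi_{S\oplus t}$ on the off-diagonal Cayley part and on the bottom-right scalar of $\bold N\cong\frak J$. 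Fubini then yields the required identity, with the $L(1+s,\mu)$ factor cancelling against the zeta factor arising from the $w$-integral exactly as in the $SL_2$-normalization. The combined Weil-index $|\det S|^{1/2}_p$ together with the Jacobian from Step~2 produces precisely $|\det S|^{-9/2}_p |t|^{-4}_p$.

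\emph{The main obstacle} is the Bruhat rewriting in Step~2: verifying that the combination of $\iota_{e_3}$, $v_1(y,0,z)\in\bold V$, and $n(w)\in\bold H$ lands on the big cell relative to $\bold P=\bold M\bold N$ and computing the Jacobian explicitly. The combinatorics here reduces, thanks to the structure of $\bold Q=\bold L\bold V$ with $\bold H\subset\bold L$, to the rank-one computation in $\bold H\cong SL_2$ together with the Heisenberg commutation relations—both already encoded in Lemma~\ref{useful}—so the prototype argument of \cite[\S3]{IY} carries over, with the change of variables tracked via the action formulas of Section~\ref{degenerate}.
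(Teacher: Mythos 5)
Your outline takes a different and much harder route than what is actually needed, and the steps you yourself flag as the crux are precisely where it breaks down. The paper's proof uses no Bruhat decomposition and no Fourier-transform action of $\iota_{e_3}$ in the Weil representation. Starting from the collapsed $\bold V(\Q_p)$-integral of Lemma \ref{longeq}, one first removes the $\iota_{e_3}$'s by the substitution $v_1\mapsto \iota_{e_3}v_1\iota_{e_3}^{-1}$, then replaces $v_1$ by $n(z')v_1n(z')^{-1}$ so that $n(z')$ stands on the left, writes $v_1=v_1(0,y,z-\sigma(x,y))v_1(x,0,0)$, and invokes Lemma \ref{useful}(5): $n(z')v_1(0,y,z)=p_B$ with $B=\begin{pmatrix} z&y\\ {}^t\bar y&z'\end{pmatrix}$. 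Hence the $(y,z,z')$-integration assembles \emph{exactly} the Jacquet integral over $\bold N(\Q_p)\simeq\frak J(\Q_p)$ against $\overline{\psi_{S\oplus t}}$, right-translated by $v_1(x,0,0)\in\bold M$; the only Weil-representation input is the elementary Heisenberg action evaluated at $0$ (so $n(z')$ contributes nothing there), and every power of $|t|_p$ and $|\det S|_p$ in $C(S)|t|_p^{-4}$ comes solely from the normalizations (\ref{nw}) and (\ref{sl2whip}) together with $\det(S\oplus t)=t\det(S)$ --- there is no Jacobian and no Weil index in the constant.

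Concretely, the gaps in your Steps 2--3 are: (i) the $SL_2$ identity $\iota_{e_3}n(w)=n(-w^{-1})h(-w^{-1})\iota_{e_3}$ is false; the big-cell decomposition of $\iota n(w)$ necessarily carries an additional unipotent factor, and working on the open cell $w\neq 0$ then forces you to control the decomposition-dependent data $(z',x,k)$ and the convergence of the $w$-integral, none of which is carried out. (ii) The asserted Jacobian contributing ``a precise power of $|t|_p$ and $|\det S|_p$'' and the Fourier-transform mass $|\det S|_p^{1/2}$ are unsubstantiated and implausible: $X$ is $16$-dimensional, so a $\sigma_S$-Fourier transform would not have constant $|\det S|_p^{1/2}$, and since in the correct computation no such factors arise at all, if your change of variables genuinely produced them the final constant would not come out to $C(S)|t|_p^{-4}$. (iii) Your $L$-factor bookkeeping is internally inconsistent: in Step 1 you claim the prefactors already ``match exactly,'' yet your reduced identity still carries $L(1+s,\mu)$, which you then propose to cancel against a zeta factor ``from the $w$-integral''; but on the right-hand side the $w$ (that is, $z'$) direction is not evaluated separately --- it is the $e_{33}$-coordinate of the $\bold N$-integration --- so it cannot simultaneously be absorbed there and yield an extra $L$-factor. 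Since the identity is actually proved in your Steps 2--3 and those are left as a ``the prototype carries over'' assertion built on incorrect ingredients, the proposal does not constitute a proof; the short direct manipulation via Lemma \ref{useful}(5) is the way to go.
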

\begin{proof} A direct computation shows 
\begin{eqnarray}\label{FJ}
&& \left(|t|^{\frac{1}{2}}_p\prod_{i=0}^2 L(9-4i+s,\mu)\right)^{-1} w^{\mu,s}_t(\beta^\psi_S(\ast;\phi\otimes \overline{\Phi})) \\ 
&&\phantom{xxxx} =\left(\prod_{i=0}^2 L(9-4i+s,\mu) \right)^{-1}\int_{z'\in \Q_p}
\beta^\psi_S(i(\iota_{e_3})n(z'); \phi\otimes \overline{\Phi})\overline{\psi(tz')}dz' \nonumber \\
&&\phantom{xxxx} =
\int_{z'\in \Q_p}\int_{\bold V(\Q_p)} \phi(\iota \cdot v_1\cdot n(z'))
\overline{(\omega^\psi_S(v_1\cdot n(z'))\Phi)(0)}\psi(-tz')dv_1dz' \nonumber \\
&&\phantom{xxxx} =
\int_{z'\in \Q_p}\int_{\bold V(\Q_p)} \phi(\iota \cdot v_1\cdot n(z'))
\overline{(\omega^\psi_S(v_1))\Phi)(0)}\psi(-tz')dv_1dz' \nonumber
\end{eqnarray}
By transforming $v_1$ with $n(z')v_1n(z')^{-1}$, 
\begin{eqnarray*}
(\ref{FJ})&=&\int_{z'\in \Q_p}\int_{\bold V(\Q_p)} \phi(\iota\cdot n(z')\cdot v_1)
\overline{(\omega^\psi_S(n(z')v_1))\Phi)(0)}\psi(-tz')dv_1dz'\nonumber \\
&=& 
\int_{Z\in Z(\Q_p)}\int_{\bold V(\Q_p)} \phi(\iota\cdot n(z')\cdot v_1(0,y,z-\sigma(x,y))v_1(x,0,0))
\overline{\Phi(x)\psi_S(z-\sigma(x,y)))}\psi(-tz')dv_1dz'\nonumber \\
&=& 
\int_{Z\in Z(\Q_p)}\int_{\bold V(\Q_p)} \phi(\iota\cdot n(z')\cdot v_1(0,y,z)v_1(x,0,0))
\overline{\Phi(x)\psi_S(z))}\psi(-tz')dv_1dz'\nonumber \\
&=& 
\int_{Z\in Z(\Q_p)}\int_{X(\Q_p)} \phi(\iota\cdot p_Z \cdot v_1(x,0,0) )
\overline{\Phi(x)\psi_{S\oplus t}(Z)}dxdZ \nonumber \\
&=& 
\int_{Z\in Z(\Q_p)}\int_{X(\Q_p)} \phi(\iota\cdot p_Z \cdot v_1(x,0,0) )
\overline{\Phi(x)\psi_{S\oplus t}(Z)}dxdZ. \nonumber
\end{eqnarray*}

Since this integral is absolutely convergent, we can change the order of the double integral. Hence  
\begin{eqnarray}
(\ref{FJ})&=& \int_{X(\Q_p)} \overline{\Phi(x)}\int_{Z\in Z(\Q_p)} \phi(\iota\cdot p_Z \cdot v_1(x,0,0) )
\overline{\psi_{S\oplus t}(Z)}dZdx. \nonumber \\
&=& |\det(S\oplus t)|^{-\frac{9}{2}}(\prod_{i=0}^2 L(s-4i+s,\mu))^{-1}
\int_{X(\Q_p)}\overline{\Phi(x)}w^{\mu,u}_{S\oplus t}(v_1(x,0,0)\phi)dx. \nonumber
\end{eqnarray}
Clearing up the factors we have the claim. 
\end{proof} 

\subsection{The archimedean case} For $R\in \jt(\R)_{>0}$, define $\Phi_R\in \mathcal{S}(X(\R))$ by 
$\Phi_R(x)=\textbf{e}(-2\pi \sigma_R(x,x))$. Recall the degenerate Whittaker functions $W^{(\ell)}_{R\oplus r},\ W^{(\ell)}_r$ in 
Section \ref{dwa}. 

\begin{lemma}\label{awc}For $R\in \jt(\R)_{>0},\ r \in \R_{>0}$, $h\in {\rm SL}_2(\R)$ and $\ell\in \Z_{\ge 0}$, the equality 
$$\int_{X(\R)}W^{(\ell)}_{R\oplus r}(v_1(x,0,0)h)\overline{\omega^\psi_R(h)\Phi_R(x)}dx=C(R)r^4 W^{(\ell-8)}_r
(h),
$$
where $C(R)=\det(R)^{\ell-4}2^{-16}$. 
\end{lemma}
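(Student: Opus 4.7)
The plan is to verify the identity by matching the transformation laws of both sides as functions of $h \in SL_2(\R)$, thereby reducing to the single base point $h = 1$, and then computing both sides explicitly as a Gaussian integral. This is the archimedean analogue of the compatibility established $p$-adically in Lemma \ref{com1}.

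First, I would check that both sides have the same equivariance under right multiplication by $k \in SO_2(\R)$. By (\ref{sl2whi}), the RHS transforms by $j_{SL_2}(k,\sqrt{-1})^{-(\ell-8)}$. On the LHS, (\ref{whi}) combined with (\ref{autop}) gives the factor $j_{SL_2}(k,\sqrt{-1})^{-\ell}$ from $W^{(\ell)}_{R\oplus r}(v_1(x,0,0)hk)$, while $\Phi_R$ is a lowest weight vector of weight $-8$ for $\omega^\psi_R|_{SO_2(\R)}$, the weight $-8$ being forced by the real dimension $16$ of the Lagrangian $X(\R)$. Taking complex conjugates of this unit-modulus factor yields $j_{SL_2}(k,\sqrt{-1})^{+8}$ on the $\overline{\omega^\psi_R(k)\Phi_R}$ term, and the net factor $j_{SL_2}(k,\sqrt{-1})^{-(\ell-8)}$ matches the RHS. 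Next, I would check equivariance under left translation by $n(z') \in N_1$: the special case $X=0$ of Lemma \ref{useful}-(2) shows $v_1(x,0,0)\cdot e_3 = e_3$, so $v_1(x,0,0)$ commutes with $n(z') = p_{z'e_3} \in \bold N$, and (\ref{whi}) then produces the character $\textbf{e}(z'r)$ for $W^{(\ell)}_{R\oplus r}(v_1(x,0,0)n(z')h)$. The standard Weil-representation formula $\omega^\psi_R(n(z'))\Phi(x) = \textbf{e}(z'\sigma_R(x,x))\Phi(x)$ contributes a compensating quadratic character which (exactly as in the $p$-adic computation of Lemma \ref{com1}, where it cancels against the $\sigma_R$-shift appearing in $v_1(x,0,0)(R\oplus r)$) yields the net character $\textbf{e}(z'r)$ matching (\ref{sl2whi}). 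An analogous computation for the diagonal torus $A \subset SL_2(\R)$ completes the reduction to $h = 1$.

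At $h = 1$, using Lemma \ref{useful}-(2), $\nu(v_1(x,0,0)) = 1$, and (\ref{whi}), we obtain $W^{(\ell)}_{R\oplus r}(v_1(x,0,0)) = \det(R_{r,x})^{\ell/2}\, e^{-2\pi(R_{r,x},E)}$, and the Jordan algebra identities $\det(R_{r,x}) = r\det(R)$ and $(R_{r,x}, E) = \tr(R) + r + \sigma_R(x,x)$ reduce the LHS to a $16$-dimensional Gaussian integral in $x$ against $\overline{\Phi_R(x)}$. By the standard Gaussian evaluation $\int_{\R^{16}} e^{-\pi Q(x)}dx = (\det Q)^{-1/2}$ and the exceptional Jordan norm relation expressing $\det(\sigma_R)$ as a fixed power of $\det(R)$, this integration contributes the factor $\det(R)^{-4}\cdot 2^{-16}$. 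Combining with the prefactor $(r\det R)^{\ell/2}$ and matching against $C(R)r^4 W^{(\ell-8)}_r(1) = \det(R)^{\ell-4}2^{-16}\,r^{\ell/2} e^{-2\pi r}$ yields the identity with the stated constant.

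The hard part will be pinning down the precise constant $C(R) = \det(R)^{\ell-4} 2^{-16}$: this requires (i) the exact power of $\det(R)$ in the determinant of the quadratic form $\sigma_R$ on $\frak C^2$, which comes from the Pierce-type decomposition of the exceptional Jordan norm, and (ii) careful bookkeeping of the exponential factors from $(R_{r,x},E)$ so that the $e^{-2\pi\tr R}$ generated at $h=1$ is correctly absorbed by the normalizations implicit in $\Phi_R$ and in $\omega^\psi_R$. Beyond these constants, the main conceptual point is the $-8$ weight shift, which is forced by the $16$-dimensional Lagrangian $X(\R)\cong\frak C^2$ and must propagate consistently through the equivariance argument.
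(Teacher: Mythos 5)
Your outline uses the same ingredients as the paper, but organizes them differently: the paper never reduces to $h=1$. It simply records the closed formula $\omega^\psi_R(h)\Phi_R(x)=j_{{\rm SL}_2}(h,\sqrt{-1})^{\pm 8}\,\textbf{e}(\sigma_R(x,x)\tau)$ with $\tau=h\sqrt{-1}$, rewrites $W^{(\ell)}_{R\oplus r}(v_1(x,0,0)h)=W^{(\ell)}_{R_{r,x}}(h)$ and notes $(R_{r,x},h(E\sqrt{-1}))=\sqrt{-1}\,{\rm Tr}(R)+r\tau+\sigma_R(x,x)\tau$ (Lemma \ref{useful}, (\ref{whi}), (\ref{autop})), and then evaluates the resulting $16$-dimensional Gaussian integral over $X(\R)$, quoting $\int_{X(\R)}e^{-4\pi\sigma_R(x,x)v}dx=2^{-16}\det(R)^{-4}v^{-8}$ from \cite{KY}, for every $h$ at once. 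Your Iwasawa-style reduction (right $SO_2$-weight of the Gaussian, left $N_1$- and $A$-equivariance, then the base point) encodes exactly the same information -- knowing those three equivariances of $\omega^\psi_R(h)\Phi_R$ is equivalent to knowing the closed formula -- so it does not simplify anything, and it costs extra care at the torus step, which you only assert: under left translation by $h(a)$ the index moves, $W^{(\ell)}_r\mapsto W^{(\ell)}_{a^2r}$ by (\ref{sl2whi}), and the Weil action rescales the Gaussian, so the reduction works only because the identity is proved for all $r>0$ simultaneously and after a change of variables in $x$.

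Two concrete soft spots. First, your $N_1$-step double counts: if you commute $v_1(x,0,0)$ past $n(z')$ and extract the character $\textbf{e}(rz')$ from the $W$-factor alone, there is no $\sigma_R$-shift left for the Weil character $\textbf{e}(-z'\sigma_R(x,x))$ to cancel against, and the net character would come out wrong. The consistent accounting -- the one underlying both Lemma \ref{com1} and the paper's archimedean computation -- is to absorb $v_1(x,0,0)$ first: the left $n(z')$-character of $W^{(\ell)}_{R_{r,x}}$ is $\textbf{e}(z'(r+\sigma_R(x,x)))$, the shift sitting in the $(3,3)$-entry of $R_{r,x}$, and it is this shift that the conjugated Weil factor cancels, leaving $\textbf{e}(rz')$. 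Second, the constant: carrying out your own $h=1$ computation gives $(r\det R)^{\ell/2}e^{-2\pi({\rm Tr}(R)+r)}\cdot 2^{-16}\det(R)^{-4}$, i.e.\ $C(R)=\det(R)^{\frac{\ell}{2}-4}2^{-16}$ together with the factor $e^{-2\pi{\rm Tr}(R)}$ that you flag. This agrees with the exponent $\det(R)^{\frac{\ell-8}{2}}$ that the paper's proof actually produces (and with $C_1(S)$ in Lemma \ref{important1}), not with the $\det(R)^{\ell-4}$ printed in the statement; likewise the ${\rm Tr}(R)$-exponential is dropped without comment in the paper's proof. So your closing claim that the bookkeeping lands exactly on ``the stated constant'' should not be taken literally -- the method is right, but what it proves is the corrected constant, not the printed one.
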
 
\begin{proof} It easy to check that $\omega^\psi_R(h)\Phi_R(x)=j_{{\rm SL}_2}(h,\sqrt{-1})^8\textbf{e}(\sigma_R(x,x)\tau)$ 
for $\tau:=h\sqrt{-1},\ h\in {\rm SL_2}(\R)$, and $x\in X(\R)$. 
Note that $(R_{r,x},h(E\sqrt{-1}))=\sqrt{-1}{\rm Tr}(R)+r\tau+\sigma_R(x,x)\tau$. 
Then by Lemma \ref{useful}, (\ref{autop}) and (\ref{whi}) a direct computation shows 
\begin{eqnarray}
&&\int_{X(\R)}W^{(\ell)}_{R\oplus r}(v_1(x,0,0)h)\overline{\omega^\psi_R(h)\Phi_R(x)}dx \nonumber \\
&=&  \det(R_{r,x})^{\frac{\ell}{2}} \textbf{e}((R_{r,x},h(E\sqrt{-1})))j(h,E\sqrt{-1})^{-\ell}\overline{j_{{\rm SL}_2}(h,\sqrt{-1})^8
\textbf{e}(\sigma_R(x,x)\tau)}dx \nonumber \\
&=& \det(R)^{\frac{\ell}{2}} r^{\frac{\ell}{2}} \textbf{e}(r\tau)j_{{\rm SL}_2}(h,\sqrt{-1})^{-(\ell-8)}\frac{1}{({\rm Im}\tau)^8}
\int_{X(\R)}e^{-4\pi \sigma_R(x,x)\tau}dx \nonumber \\
&=& \det(R)^{\frac{\ell-8}{2}}2^{-16}r^4 W^{(\ell-8)}_r(h)\nonumber
\end{eqnarray}
where the last equality follows from the formula 
$\ds\int_{X(\R)}e^{-4\pi \sigma_R(x,x)\tau}dx =2^{-16}\det(R)^{-4}({\rm Im}\, \tau)^8$ 
\newline (see the line -2 in page 252 of \cite{KY}).  
\end{proof}

\subsection{The global case} 
Let $\psi=\otimes'_{p}\psi_p=\psi_\f\otimes \psi_\infty:\A\lra \C^\times$ be the standard non-trivial additive character so that $\psi_\infty=\textbf{e}_\infty$. For $S\in \jt(\Q)_{>0}$ we define the (finite) global Weil representation by 
$\omega_S=\otimes'_{p}\omega_{S,p}=\omega_{S,\f}\otimes \omega_{S,\infty}$ acting on $\mathcal{S}(X(\A_\Q))$. 
For $\Phi\in \mathcal{S}(X(\A_\f))$ we define 
$$\Phi_S(x)=\Phi(x_\f)\Phi_S(x_\infty),\ \Phi_S(x_\infty)=\textbf{e}(-2\pi \sigma_R(x_\infty,x_\infty))
$$
for $x=x_\f\cdot x_\infty\in X(\A_\Q)$. 

For $F\in \mathcal{A}_\ell(\bold P(\Q)\bs \bold G(\A_\Q))$ and take its holomorphic Fourier expansion of weight $\ell$ as 
$$F(g)=\sum_{B\in \frak J(\Q)_{>0}}\textbf{w}_B(g_\f,F)W^{(\ell)}_B(g_\infty),\ g=g_\f\cdot g_\infty\in \bG(\A_\Q).$$
For $\Phi\in \mathcal{S}(X(\A_\f))$ we define the $(S,\phi)$-th Fourier-Jacobi coefficient by 
$$F^S_\Phi(h):=\int_{\bold V(\Q)\bs \bold V(\A_\Q)}F(v_1h)\overline{\Theta^{\psi_S}(\omega_S(v_1h)\Phi_S)}dv_1, \ h \in 
{\rm SL}_2(\A_\Q).
$$  

The following lemma is an analogue of the argument at line 5-6, p.246 in \cite{KY}:
\begin{lemma}\label{important1} Keep the notation as above. Then for $F\in \mathcal{A}_\ell(\bold P(\Q)\bs \bold G(\A_\Q))$, 
\begin{enumerate}
\item $F^S_\Phi(h)=C_1(S)\ds\sum_{r\in \Q_{>0}} r^4 W^{\ell-8}_r(h_\infty)
\Big(\int_{X(\A_\f)}\textbf{w}_{S\oplus r}(v_1(x,0,0)h_\f,F)\overline{(\omega_{S,\f}(h_\f)\Phi)(x_\f)}dx_\f\Big)$,  
\newline where $C_1(S)=\det(S)^{\frac{\ell-8}{2}}2^{-16}$.  

\item $F$ is left invariant by $\bold G(\Q)$ if and only if $(g_\f\cdot F)^S_\Phi$ is left invariant by ${\rm SL}_2(\Q)$ 
for any $g_\f\in \bold G(\A_\f)$, $S\in \jt(\Q)_{>0}$, and $\Phi\in \mathcal{S}(X(\A_\f))$. 
\end{enumerate}
\end{lemma}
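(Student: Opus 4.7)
For part (1), the plan is to perform the classical unfolding maneuver for Fourier-Jacobi expansions, adapted to this exceptional setting. First, I would expand $\Theta^{\psi_S}(\omega_S(v_1 h)\Phi_S)$ as a sum over $x_0\in X(\Q)$ and use the left $\bold P(\Q)$-invariance of $F$ — noting that $v_1(x_0,0,0)\in \bold M(\Q)$ by Lemma \ref{useful}(2) — to unfold the $v_1$-integral, replacing the compact domain $X(\Q)\bs X(\A)$ by all of $X(\A)$. Next, using the decomposition $v_1(x,y,z)=v_1(x,0,0)v_1(0,y,z-\sigma(x,y))$ from Lemma \ref{useful}(1) together with Lemma \ref{useful}(3), I would substitute the holomorphic Fourier expansion of $F$ along $\bold N$ and carry out the $(y,z)$-integration; via the $\bold M$-action formula $v_1(x,0,0)\cdot(X\oplus r)=R_{r,x}$ of Lemma \ref{useful}(2), this picks out precisely the Fourier modes $B$ of the form $S\oplus r$ with $r\in\Q_{>0}$, up to $\bold M(\Q)$-translation. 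The remaining archimedean integral is exactly of the shape handled by Lemma \ref{awc}, which produces the factor $r^4 W^{(\ell-8)}_r(h_\infty)$ together with the constant $C_1(S)=\det(S)^{(\ell-8)/2}2^{-16}$; the finite-place contributions assemble into the integral $\int_{X(\A_\f)}\textbf{w}_{S\oplus r}(v_1(x,0,0)h_\f, F)\,\overline{(\omega_{S,\f}(h_\f)\Phi)(x_\f)}\,dx_\f$ of the claim.

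For part (2), the forward direction is a routine change of variables: if $F$ is $\bold G(\Q)$-invariant then, for $\gamma\in \bold H(\Q)\simeq SL_2(\Q)$, one substitutes $v_1\mapsto \gamma^{-1}v_1\gamma$ in the Fourier-Jacobi integral (valid since $\bold H$ normalizes $\bold V$) and applies the $SL_2(\Q)$-automorphy of the theta kernel $\Theta^{\psi_S}$ to conclude $(g_\f\cdot F)^S_\Phi(\gamma h)=(g_\f\cdot F)^S_\Phi(h)$. The converse is the substantive direction. The key observation is that $F$ is already left $\bold P(\Q)$-invariant by hypothesis, and $\bold G(\Q)$ is generated by $\bold P(\Q)$ together with the single element $\iota_{e_3}$, so it suffices to deduce $F(\iota_{e_3}g)=F(g)$. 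For this, the formula in (1) allows one to recover the Fourier coefficients $\textbf{w}_B(g_\f, F)$ for $B\in \bold M(\Q)\cdot\{S\oplus r\}$ by Fourier inversion on $X(\A_\f)$ against $\Phi$ and on $r\in \Q_{>0}$ against the archimedean degenerate Whittaker functions. The $SL_2(\Q)$-invariance of every $(g_\f\cdot F)^S_\Phi$ — specialized to $\gamma=\iota_{e_3}$ and varied over all $(S,\Phi,g_\f)$ — then translates directly into the required $\iota_{e_3}$-invariance of $F$.

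The main obstacle I anticipate is in the converse of part (2): one must verify that every $B\in \frak J(\Q)_{>0}$ actually lies in the $\bold M(\Q)$-orbit of some $S\oplus r$ with $S\in \jt(\Q)_{>0}$ and $r\in \Q_{>0}$, so that the Fourier coefficient data packaged in the $F^S_\Phi$ is genuinely complete. This is an assertion about reduction theory for positive-definite elements of the exceptional Jordan algebra $\frak J$, and without it the Fourier inversion does not reach enough coefficients to force full automorphy. A secondary difficulty in part (1) is the careful bookkeeping of Haar measure normalizations on $X$, $Y$, $Z$ and of the $\sgn$, $\mu_\f$, and $\nu$ factors from (\ref{cB-rel}) in order to land exactly on the constant $2^{-16}$ and the power $\det(S)^{(\ell-8)/2}$ prescribed in $C_1(S)$.
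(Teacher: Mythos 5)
Your part (1) and the easy direction of (2) are essentially the paper's own argument: expand the theta kernel over $X(\Q)$, complete the square to write the contributing Fourier modes as $B=v_1(\lambda,0,0)\cdot(S\oplus r)$, collapse the sum via the $Y$-integration, and evaluate the archimedean integral by Lemma \ref{awc}; that part is fine.

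The genuine gap is in the converse of (2). You correctly reduce to proving $F(\iota_{e_3}g)=F(g)$ (using $\bold G(\Q)=\langle \bold P(\Q),\iota_{e_3}\rangle$), but your mechanism --- recover the coefficients $\textbf{w}_B(g_\f,F)$ by Fourier inversion and let the $SL_2(\Q)$-invariance of the $(g_\f\cdot F)^S_\Phi$ ``translate directly'' into $\iota_{e_3}$-invariance of $F$ --- does not work as stated. Left translation by $\iota_{e_3}$ does not act in any manageable way on the Fourier expansion along $\bold N$: the function $F(\iota_{e_3}\,\cdot\,)$ is not left $\bold P(\Q)$-invariant and has no holomorphic Fourier expansion of this type, so knowing the coefficients of $F$ gives no coefficientwise comparison with $F(\iota_{e_3}\,\cdot\,)$. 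What is needed (and what the paper supplies) is a reconstruction of the \emph{function} from its Fourier--Jacobi data: decompose $F=\sum_{S\in\jt(\Q)_{>0}}F^S$ by grouping Fourier modes according to their upper-left block, and for each $S$ expand the Jacobi-form piece as a \emph{finite} theta expansion
$$F^S(v_1\,g_1^\infty\, h)=c\sum_{i=1}^{k}\Theta\bigl(\omega_S(v_1h)\phi^i\bigr)\,F^S_{\phi^i}(h),$$
where $\{\phi^1,\dots,\phi^k\}$ is an orthonormal basis of the finite-dimensional space $(\omega_{S,\f})^{C_S}$ (admissibility of the Weil representation), $g_1^\infty\in{\rm Spin}(1,9)(\R)$, $v_1\in\bold V(\A_\Q)$, $h\in\bold H(\A_\Q)$. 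Combined with the Iwasawa decomposition $\bold G(\R)=\bold Q(\R)K_\infty$, this expresses $F^S(g)$ for \emph{arbitrary} $g\in\bold G(\A_\Q)$ in terms of theta functions (which are $\bold H(\Q)$-invariant) and the coefficients $(g_\f\cdot F)^S_{\phi^i}$, whose ${\rm SL}_2(\Q)$-invariance is exactly the hypothesis; this is the step that transfers invariance from the Fourier--Jacobi coefficients to $F^S$, hence to $F$, before one invokes $\bold H(\Q)\bold P(\Q)=\bold G(\Q)$. Without this finite theta decomposition plus the Iwasawa step, the implication you call ``direct'' is unproved.

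Relatedly, the issue you single out as the main obstacle --- whether every $B\in\frak J(\Q)_{>0}$ lies in the $\bold M(\Q)$-orbit of some $S\oplus r$ --- is in fact the easy point: writing $B=\begin{pmatrix}S&w\\ {}^t\overline{w}&t\end{pmatrix}$ with $S\in\jt(\Q)_{>0}$, one takes $\lambda\in X(\Q)$ with $S\lambda=w$ and $r=t-\sigma_S(\lambda,\lambda)>0$, so $B=v_1(\lambda,0,0)\cdot(S\oplus r)$ by Lemma \ref{useful}(2); this completion of the square is already used inside the proof of part (1). The real work in the converse lies in the theta expansion and Iwasawa argument described above.
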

\begin{proof} By the definition 
$$F^S_\Phi(h)=\int_{\bold V(\Q)\bs \bold V(\A_\Q)}\sum_{B\in \frak J(\Q)_{>0}}
\textbf{w}_B(g_\f,F)W^{(\ell)}_B(g_\infty)\overline{\Theta^{\psi_S}(\omega_S(v_1h)\Phi_S)} dv_1.
$$
Observing the integral at the archimedean place we see that the sum inside the integral runs over 
$B\in\frak J(\Q)_{>0}$ so that 
$B=\left(\begin{array}{cc}
S & \ast \\
{}^t \overline{\ast} & \ast
\end{array}
\right)
$. Further such a $B$ can be written as 
$$B=v_1(\lambda,0,0)(S\oplus r)=S_{r,\lambda}
$$
for some $\lambda\in X(\Q)$ and $r\in \Q_{>0}$. By (\ref{fw}) one can check that
$$W_B(g,F)=W_B(g_\f,F)W^{(\ell)}_B(g_\infty)$$
and hence we have $$\textbf{w}(g_\f,F)W^{(\ell)}_{B}(g_\infty)=\det(B)^{-\frac{\ell}{2}}\textbf{e}((B,E\sqrt{-1}))^{-1}W_B(g,F).$$
For $B=S_{r,\lambda}$ we see that 
$\det(B)^{-\frac{\ell}{2}}\textbf{e}((B,E\sqrt{-1}))^{-1}=
\det(S\oplus r)^{-\frac{\ell}{2}}\textbf{e}((S\oplus r,E\sqrt{-1}))^{-1}:=C_2(S,r)$ which is 
independent of $\lambda$.  

By Lemma \ref{useful}, we have    
\begin{eqnarray}\label{wc}
W_B(v_1(x,y,z)h,F)&=& W_B(v_1(0,y,z+\sigma(x,y))v_1(x,0,0)h,F) \nonumber \\
&=& \psi_S(z+\sigma(x,y))\psi_S(2\sigma(\lambda,y))W_B(v_1(x,0,0)h,F) \nonumber
\end{eqnarray}
for $v_1=v_1(x,y,z)\in \bold V(\A_\Q)$ and $h\in {\rm SL}_2(\A_\Q)$. 
On the other hand we see easily that 
$$W_B(v_1(x,0,0)h,F)=\int_{Z\in \frak J(\Q)\bs \frak J(\A_\Q)}F(p_Z v_1(x+\lambda,0,0))\overline{\psi_{S\oplus r}(Z)}dZ.
$$
Therefore 
\begin{eqnarray}\label{wc1}
&&\int_{\bold V(\Q)\bs \bold V(\A_\Q)}F(v_1(x,y,z)h)\overline{\Theta^{\psi_S}(\omega_S(v_1h)\Phi_S)}dv_1 \nonumber \\
&=& \int_{\bold V(\Q)\bs \bold V(\A_\Q)} \sum_{B=S_{\lambda,r}\in\frak J(\Q)_{>0}}C_2(S,r)^{-1}
\psi_S(z)\psi_S(2\sigma(\lambda,y))\psi_S(\sigma(x,y))W_{S\oplus r}(v_1(x+\lambda,0,0)h) \nonumber \\
&\times & \overline{\sum_{\xi\in X(\Q)}\omega_S(h)\Phi_S(\xi+x)\psi_S(z)\psi_S(2\sigma(\xi,y))\psi_S(\sigma(x,y)) }
dv_1. \nonumber
\end{eqnarray}
Since the integral of $\psi_S(2\sigma(\lambda-\xi,y))$ over $Y(\Q)\bs Y(\A_\Q)$ is zero unless $\xi=\lambda$. 
It follows from this that 
\begin{eqnarray}\label{wc1}
&& \int_{\bold V(\Q)\bs \bold V(\A_\Q)}F(v_1(x,y,z)h)\overline{\Theta^{\psi_S}(\omega_S(v_1h)\Phi_S)}dv_1 \nonumber \\
&=&\int_{X(\Q)\bs X(\A_\Q)}\sum_{r\in \Q_{>0}}\sum_{\lambda\in X(\Q)}
C_2(S,r)^{-1}W_{S\oplus r}
(v_1(\lambda+x,0,0))\overline{\omega_S(h)\Phi(x+\lambda)} dx \nonumber \\
&=&\int_{X(\A_\Q)}\sum_{r\in \Q_{>0}}C_2(S,r)^{-1}W_{S\oplus r}
(v_1(x,0,0))\overline{\omega_S(h)\Phi(x)} dx \nonumber \\
&=&\sum_{r\in \Q_{>0}}\int_{X(\R)}W_{S\oplus r}
(v_1(x_\infty,0,0))\overline{(\omega_{S,\infty}(h_\infty)\Phi_S)(x_\infty)} dx_\infty  \nonumber \\
&\times & 
\int_{X(\A_\f)}\textbf{w}_{S\oplus r}
(v_1(x_\f,0,0))\overline{(\omega_{S,\f}(h_\f)\Phi)(x_\f)} dx_\f \nonumber  \\
&=& \det(S)^{\frac{\ell-8}{2}}2^{-16}\sum_{r\in \Q_{>0}} r^4 W^{(\ell-8)}_r(h_\infty)\int_{X(\A_\f)}\textbf{w}_{S\oplus r}(v_1(x_\f,0,0))
\overline{(\omega_S(h_\f)\Phi)(x_\f)}dx_\f. \nonumber
\end{eqnarray}

Next we prove the second claim: ``only if" part is clear. 
Put $w^{(\ell)}_B(g,F):=\textbf{w}_B(g_\f,F)W^{(\ell)}_B(g_\infty)$ for simplicity. 
$$F(g)=\sum_{S\in \jt(\Q)_{>0}}F^S(g),\quad F^S(g):=\sum_{r\in \Q_{>0}}\sum_{u\in X(\Q)}w^{(\ell)}_{S_{r,u}}(v_1(u,0,0)g,F).
$$
Let $C_S$ be an open compact subgroup of $\frak J(\A_\f)$ such that $F^S$ is right invariant. 
By the admissibility of $\omega_{S,\f}$, ${\rm dim}(\omega_{S,\f})^{C_S}$ is  finite. Therefore 
we can take an orthonormal basis $\{\phi^1,\ldots,\phi^k\}$ of $(\omega_{S,\f})^{C_S}$. 
Recall that $\bold Q=\bold L\bold V$, where the derived group of $\bold L$ is $\bold H\times Spin(1,9)$, $\bold H\simeq SL_2$.
For $g_1^\infty\in {\rm Spin}(1,9)(\R)$, $v_1\in \bold V(\A_\Q)$, and $h\in \bold H(\A_\Q)$, 
one can write 
$$F^S(v_1g^\infty_1 h)=c \sum_{i=1}^k \Theta(\omega_S(v_1h)\phi^i)F^S_{\phi^i}(h).
$$  
Here $c$ is a constant depending only on $g_1^\infty$ and $S$. 

By Iwasawa decomposition, $\bold G(\Bbb R)=\bold Q(\Bbb R)K_\infty$, where $K_\infty$ is a maximal compact subgroup of type $E_6$.
Hence any element $g\in \bold G(\A_\Q)$ can be written as $g=g^\infty_1 g^\infty_2 v_{1,\infty} k g_\f,\ 
g^\infty_1\in {\rm Spin}(1,9)(\R),\ g^\infty_2\in \bold H(\R)$, $v_{1,\infty}\in \bold V(\R)$, $k\in K_\infty$, and $g_\f\in \bold G(\A_\f)$.  
Note that $g^\infty_1 g^\infty_2=g^\infty_2 g^\infty_1$.
By using this we have 
$$F^S(g)=j(k,E)^{-\ell}(g_\f\cdot F)^S(g^\infty_1 g^\infty_2 v_{1,\infty})=
j(k,E)^{-\ell} c\sum_{i=1}^k \Theta(\omega_S(v_{1,\infty} g^\infty_2)\phi^i)(g_\f\cdot F)^S_{\phi^i}(g^\infty_2).$$
By the assumption $(g_\f\cdot F)^S_{\phi^i}$ is left invariant by $\bold H(\Q)\simeq {\rm SL}_2(\Q)$ and so are 
theta functions by the definition (note that theta functions are left invariant by $J(\Q)$). 
Hence $F_S$ is left invariant by $\bold H(\Q)$. Now we have
$\bold H(\Q)\bold P(\Q)=\bold G(\Q)$. (By \cite[p. 526]{B}, $\bold G(\Q)$ is generated by $\bold P(\Q)$ and $\iota$. By the identities in \cite[Lemma 5.2]{KY}, $\bold G(\Q)$ is generated by $\bold P(\Q)$ and $\iota_{e_3}$.)
Hence we have the claim.  
\end{proof}

\section{Proofs of the main theorems} 

\subsection{Proof of Theorem \ref{main-thm1} and Corollary \ref{cor1}} Let us keep the notations in Section \ref{sl2}. 
For a newform $f\in S_{k}(\mathbb{H})^{{\rm ns}},\ k\ge 2$, one can associate $\{c_r\}_{r\in \Q_{>0}}$ as in (\ref{ct}). 
For $B\in \frak J(\Q)_{>0}$, define $c_B:=c_{\det(B)}.$
By (\ref{rel-imp}), we see that 
$$c_{m\cdot B}=c_{\nu(m)^2\det(B)}
=\mu_\f(\nu(m))^{-1}\sgn(\nu(m))^{k+8} c_{\det(B)}=\mu_\f(\nu(m))^{-1}\sgn(\nu(m))^{k+8} c_B.
$$
In light of Lemma \ref{awc}, the infinity type of the Ikeda type lift is the holomorphic discrete series of lowest weight $k+8$. 
For $\phi\in I(\mu_{\bf{f}}^2)$, consider a formal series 
\begin{equation}\label{rel-imp1}
\mathcal{F}(g;\phi)=\sum_{B\in \frak J(\Q)_{>0}} c_B\cdot w^{\mu_\f}_B(g_\f\cdot \phi)W^{(k+8)}_B(g_\infty),\ g\in \bold G(\A_\Q).
\end{equation}

By Lemma \ref{fc2} and Proposition \ref{estct} this series is absolutely convergent and the relation (\ref{rel-imp1}) implies that 
$\mathcal{F}(g;)\in \mathcal{A}(\bold P(\Q)\bs \bold G(\A_\Q))$. 
By Lemmas \ref{com1} and \ref{important1}, 
for $\Phi\in \mathcal{S}(\bold X(\A_\f))$ and $S\in \jt(\Q)_{>0}$ we have 
\begin{eqnarray}
(\mathcal{F}(\ast;\phi))^S_\Phi(h)&=&
C_1(S)\det(S)^{\frac{9}{2}}\sum_{r\in \Q_{>0}}c_{S\oplus r}w^{\mu_\f}_r(\beta^\psi_S(\phi\otimes \overline{\omega_S(h_\f)\Phi})) 
W^{(k)}_r(h_\infty) \nonumber \\
&=&C_1(S)\det(S)^{\frac{9}{2}}\sum_{r\in \Q_{>0}}c_{S\oplus r}w^{\mu_\f}_r(\beta^\psi_S(\phi\otimes \overline{\omega_S(h_\f)\Phi})) 
W^{(k)}_r(h_\infty)           \nonumber  \\
&=&C_3(S)\sum_{r\in \Q_{>0}}c_{\det(S)r} w^{\mu_\f}_r(\beta^\psi_S(\phi\otimes \overline{\omega_S(h_\f)\Phi})) 
W^{(k)}_r(h_\infty)           \nonumber 
\end{eqnarray} 
where $C_3(S)=C_1(S)\det(S)^{\frac{9}{2}}=\det(S)^{\frac{k+9}{2}}2^{-16}$. By Proposition \ref{dist},  
$(\mathcal{F}(\ast;\phi))^S_\Phi(h)$ is an automorphic function on $SL_2(\A_\Q)$. Hence it is invariant by ${\rm SL}_2(\Q)$ from 
the left.  
Therefore by Lemma \ref{important1}-(2) $\mathcal{F}(g;\phi)$ is automorphic. 
Now by translating it into classical language we obtain (\ref{classical}), which is a Hecke cusp form in 
$S_{k+8}(\frak T)$. 

By definition the formal series (\ref{rel-imp1}) is a Hecke eigen if $\phi=\otimes_{p<\infty}{\phi_p}$ is a 
Hecke eigen vector 
(for example, we may choose the local sections $\phi_p$ for $p\nmid N$ defined in Section \ref{thm1.3}). 
Corollary \ref{cor1} now follows from this. 

\subsection{Proof of Theorem \ref{main-thm2}}\label{thm1.3}

Let $f=\ds\sum_{n=1}^\infty a_n(f) q^n$ be a new form in $S_{k}(\Gamma_0(N))^{{\rm new}}$ for a square free positive integer $N$. 
When $p\nmid N$, let $\pi_p'=\pi(\mu_p,\mu_p^{-1})$ for an unramified character $\mu_p$. Then
 $I(0,\mu_p^2)^{K_p}$ is one-dimensional and take its generator 
$\phi^{{\rm ur}}_p$ so that $\phi^{{\rm ur}}_p(K_p)=1$. When $p|N$, $\pi_p'={\rm St}_{GL_2}$ or its unramified quadratic twist. By Theorem 2.1 of \cite{MY} and the Frobenius reciprocity for the Jacquet modules with Proposition \ref{irr-uni}, 
we see that as a $\C$-vector space, 
$$A(|\cdot|_p)^{K^{\bold P}_p}\simeq \C,
$$
where $K^{\bold P}_p$ is the Iwahori parahoric subgroup of $\bold G(\Z_p)$ for the Siegel parabolic $\bold P$. 
Therefore there exists a generator $\phi^{{\rm st}}_p$ of $A(|\cdot|_p)^{K^{\bold P}_p}$. 
One can also  specify $\phi^{{\rm st}}_p$ explicitly by using  the Bruhat decomposition $\bold P(\Q_p)\bs \bold G(\Q_p)/K^{\bold P}_p$ consisting of four elements 
(see (1.6) of \cite{kim}) and intertwining operators. 
We do not pursue this and leave it to the interested readers. 
Put 
$$\phi:=\ds\bigotimes_{p|N} \phi^{{\rm st}}_p\otimes \bigotimes_{p\nmid N\infty}\phi^{{\rm ur}}_p.
$$
Then by Theorem \ref{main-thm1}, 
$\mathcal{F}(\cdot,\phi)$ is an automorphic function on $\bold G(\A_\Q)$ fixed by 
$K^{\bold P}_0(N):=\ds\prod_{p|N}K^{\bold P}_p
\times \prod_{p\nmid N\infty} K_p$ from the right. Now the claim follows from the fact that
$K^{\bold P}_0(N)\cap \bold G(\Q)=\Gamma^{\bold P}_0(N)$.  

\subsection{Proof of Theorem \ref{main-thm4}}\label{main-thm4-section}

Let $\Pi$ be as in Theorem \ref{main-thm4}. It is a cuspidal representation on $E_{7,3}^{\rm ad}$. We consider it as a cuspidal  representation of $GE_{7,3}$ with the trivial central character.

If $p\nmid N$, $\pi_p=\pi(\mu_{p},\mu_{p}^{-1})$, and $\Pi_p$ is the degenerate principal series ${\rm Ind}_{\bold{P}(\Q_p)}^{\bold{G}(\Q_p)}\: |\nu(\cdot)|^{2s_p}$, where $p^{s_p}=\mu_p(p)$. It is irreducible by Proposition \ref{irr-uni}.
Then
by inducing in stages, in the notation of \cite{KY}, $Ind_{R(\Bbb Q_p)}^{E_8(\Bbb Q_p)}\, \Pi_p\otimes exp(s\tilde\alpha, H_R(\ ))$ is a subrepresentation of
$$Ind_{B(\Bbb Q_p)}^{E_8(\Bbb Q_p)}\, exp(\chi, H_B(\ )),
$$
where $\chi=s(e_1-e_9)+s_p(-e_1+2e_2-e_9)+(8e_3+7e_4+6e_5+5e_6+4e_7+3e_8)$ in the notation of \cite[p. 249]{KY}. 
Then Langlands-Shahidi method gives rise to $L(s,\Pi_p,{\rm St})$ to be the right hand side of Theorem \ref{main-thm4}.

If $\pi_p={\rm St}_{GL_2}$, 
then $L(s,\pi_p)=L(s+\frac 12,1)=(1-p^{-s-\frac 12})^{-1}$, and $L(s,{\rm Sym}^3\pi_p)=L(s+\frac 32,1)$ by \cite{CM}. 
In this case, $\Pi_p$ is the subrepresentation of $Ind_{\bold P(\Bbb Q_p)}^{\bold G(\Bbb Q_p)}\, |\nu(\cdot)|$. 
Then by inducing in stages, in the notation of \cite{KY}, $Ind_{R(\Bbb Q_p)}^{E_8(\Bbb Q_p)}\, \Pi_p\otimes exp(s\tilde\alpha, H_R(\ ))$ is a subrepresentation of
$$Ind_{B(\Bbb Q_p)}^{E_8(\Bbb Q_p)}\, exp(\chi, H_B(\ )),
$$
where $\chi=s(e_1-e_9)+\frac 12(-e_1+2e_2-e_9)+(8e_3+7e_4+6e_5+5e_6+4e_7+3e_8)$ in the notation of \cite[p. 249]{KY}. 
Langlands-Shahidi method gives rise to the $\gamma$-factor, and defines the $L$-function as the reciprocal of the numerator of the 
$\gamma$-factor. Due to cancellation of the factors, we obtain $L(s,\Pi_p,{\rm St})$ to be the local factor of the right hand side in Theorem \ref{main-thm4}. 

\section{Degree 133 adjoint $L$-functions of the Ikeda type lift}\label{main-thm5-section}

We use the same notation as in \cite{kim1}:
We take simple roots, $\alpha_i=e_i-e_{i+1}$, $i=1,2,3,4,5,6$, $\alpha_7=e_5+e_6+e_7+e_8$. Here $(e_i,e_i)=\frac 78$, $(e_i,e_j)=-\frac 18$ for $1\leq i\ne j\leq 8$ and $\sum e_i=0$. The positive roots are $e_i-e_j$, $1\leq i<j\leq 7$, $-e_i+e_8$, $i=1,...,7$, and $e_i+e_j+e_k+e_8$. There are 63 of them. 
Let ${\rm Ad}$ be the degree 133 representation of $E_7(\Bbb C)$. It is the adjoint representation. Its highest weight is the one attached to the simple root $\alpha_6$, and its weights are 0 (with multiplicity 7), and roots of $E_7$.

Recall that if $p\nmid N$, the $p$-adic component of the Ikeda type lift $\Pi$ is given by
$$\Pi_p=2s_p(e_1+e_8)+8e_2+7e_3+6e_4+5e_5+4e_6+3e_7+11e_8.
$$
in the notation of \cite{kim1}.
Let $L(s,\pi_f)=\prod_p L(s,\pi_p)$, and for $p\nmid N$, $L(s,\pi_p)^{-1}=(1-\alpha_p p^{-s})(1-\alpha_p^{-1} p^{-s})$. 
Let $L(s,\pi_f,{\rm Sym}^2)$ be the symmetric square $L$-function. Then $L(s,\pi_f,{\rm Sym}^2)=\prod_p L(s,\pi_p, {\rm Sym}^2)$, 
and for $p\nmid N$, $L(s,\pi_p, {\rm Sym}^2)^{-1}=(1-\alpha_p^2 p^{-s})(1-p^{-s})(1-\alpha_p^{-2} p^{-s})$.  
For $p\nmid N$, the degree 133 local $L$-function is given by
\begin{eqnarray*}
&& L(s,\Pi_p,{\rm Ad})^{-1}=(1-p^{-s})^4 (1-p^{\pm 1-s})^4 (1-p^{\pm 2-s})^2 (1-p^{\pm 3-s})^4 (1-p^{\pm 4-s})^3 (1-p^{\pm 5-s})^3 \\
&& \phantom{xxxxx} (1-p^{\pm 6-s})^2 (1-p^{\pm 7-s})^2(1-p^{\pm 8-s})(1-p^{\pm 9-s})(1-p^{\pm 10-s})(1-p^{\pm 11-s}) \\
&& \phantom{xxxxx} \cdot L(s,\pi_p,{\rm Sym}^2)^{-3} \prod_{i=1}^4 L(s\pm i,\pi_p,{\rm Sym}^2)^{-2} \prod_{i=5}^8 L(s\pm i,\pi_p,{\rm Sym}^{2})^{-1}.
\end{eqnarray*}

If $p|N$, we follow Section \ref{main-thm4-section} to define the local factor. Namely, if $\pi_p={\rm St}_{GL_2}$, 
 $\Pi_p$ is the subrepresentation of $Ind_{\bold P(\Bbb Q_p)}^{\bold G(\Bbb Q_p)}\, |\nu(\cdot)|$. 
Then we first define $\gamma$-factors. Due to cancellation of the factors, we obtain $L(s,\Pi_p,{\rm Ad})$ as the reciprocal of the numerator of the 
$\gamma$-factors.
This proves Theorem \ref{main-thm5}.

\section{Ikeda type lift as CAP representation}

Let $f\in S_k(\Gamma_0(N))^{\rm new}$ for $N$ square free, and let $\pi_f=\otimes \pi_p$ be its associated cuspidal representation. 
If $p\nmid N$, let $\{\alpha_p,\alpha_p^{-1}\}$ be the Satake parameter of $\pi_p$. Let $\alpha_p=p^{s_p}$.

Let $F_f$ be the Ikeda lift of $f$ which is a cusp form on $G=Sp_{2n}$ (rank $2n$), and $\Pi_F=\otimes_p \Pi_p$ be the cuspidal representation associated to $F_f$. Then $\Pi_p=Ind_{P(\Bbb Q_p)}^{G(\Bbb Q_p)} |det|^{s_p}$, where $P$ is the Siegel parabolic subgroup such that $P=MN$, $M\simeq GL_{2n}$. The Ikeda lift $F_f$ is a CAP representation, namely, $\Pi_F$ is nearly equivalent to the quotient of the induced representation
$$Ind_{P_{2,...,2}}^{Sp_{2n}}\, \pi_f|det|^{n-\frac 12}\otimes\pi_f|det|^{n-\frac 32}\otimes\cdots\otimes \pi_f|det|^\frac 12,
$$
where $P_{2,...,2}$ is the standard parabolic subgroup of $Sp_{2n}$ with the Levi subgroup $GL_2\times\cdots\times GL_2$ ($n$ factors).

\smallskip

When $G=E_{7,3}^{\rm ad}$, we constructed the Ikeda type lift $F_f$ associated to $f$. Let $\Pi_F=\otimes_p \Pi_p$ be the cuspidal representation associated to 
$F_f$. Then for all finite prime $p$, $\bold G(\Bbb Q_p)=E_7^{\rm ad}(\Bbb Q_p)$ is the split group of type $E_7^{\rm ad}$, and
for $p\nmid N$,
$\Pi_p=Ind_{\bold P(\Bbb Q_p)}^{E_7^{\rm ad}(\Bbb Q_p)} |\nu(g)|^{2s_p}$, where $\bold P$ is the Siegel parabolic subgroup such that $\bold P=\bold M\bold N$, $\bold M\simeq GE_6$. Here $\Pi_F$ is not a CAP representation in the usual sense since there are not many $\Bbb Q$-parabolic subgroups of $E_{7,3}$. 

We show that $\Pi_F$ is a CAP representation in a more general sense, namely, there exists a parabolic group $\bold R=\bold M'\bold N'$ of the split $E_7^{\rm ad}$ and a cuspidal representation $\tau=\otimes_p \tau_p$ of $\bold M'(\Bbb A_\Q)$, and a parameter $\Lambda_0$ such that for almost all finite prime $p$,
$\Pi_p$ is a quotient of $Ind_{\bold R(\Bbb Q_p)}^{E_7^{\rm ad}(\Bbb Q_p)} \tau_p\otimes exp(\Lambda_0,H_Q(\, ))$.

Recall the Dynkin diagram of $E_{7}$:
\begin{align*}
{\text{o}}_{\beta_1}\text{------}{\text{o}}_{\beta_3}{%
\text{------}}&{\text{o}}_{\beta_4}{\text{------}}{\text{o}}_{\beta_5}{\text{------}}{\text{o}}_{\beta_6}{\text{------}}{%
\text{o}}_{\beta_7} \\
&\Big\vert \\
&{\text{o}}_{\beta_2}
\end{align*}

Let $\bold R$ be the parabolic subgroup attached to the set of simple roots $\{\beta_2,\beta_5,\beta_7\}$ so that $\bold R=\bold M'\bold N'$, and the derived group of $\bold M'$ is $\bold M_D'\simeq SL_2\times SL_2\times SL_2$.
Let $\tau$ be a cuspidal representation of $\bold M'(\Bbb A_\Q)$ induced by $\pi_f\otimes\pi_f\otimes\pi_f$, and $\Lambda_0=10\beta_1+11\beta_2+19\beta_3+26\beta_4+22\beta_5+15\beta_6+8\beta_7$.
Then when $\pi_p$ is unramified, 
$$Ind_{\bold R(\Bbb Q_p)}^{E_7^{\rm ad}(\Bbb Q_p)} \tau_p\otimes exp(\Lambda_0,H_R(\, ))=Ind_{\bold B(\Bbb Q_p)}^{E_7^{\rm ad}(\Bbb Q_p)}\, \chi,
$$
where $\chi=s_p\beta_2+s_p\beta_5+s_p\beta_7+\Lambda_0$. Then we can show that $\Pi_p$ is the quotient of $Ind_{\bold B(\Bbb Q_p)}^{E_7^{\rm ad}(\Bbb Q_p)}\, \chi$. Hence we have proved

\begin{theorem} $\Pi_F$ is a CAP representation in a general sense with respect to the parabolic subgroup $\bold R$.
\end{theorem}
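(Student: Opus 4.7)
My plan is to reduce the statement to a pure root-theoretic identity of unramified characters of the Borel, so that induction in stages produces the same principal series from the two parabolics $\bold R$ and $\bold P$ up to a Weyl translate. Assume $p\nmid N$ so that $\pi_p$ is the unramified principal series with Satake parameter $\{p^{s_p}, p^{-s_p}\}$, and let $\tau_p$ be the corresponding constituent of $\pi_f\otimes\pi_f\otimes\pi_f$ restricted to the derived group $SL_2\times SL_2\times SL_2$ of $\bold M'$. The first step is to write
\[
\tau_p=\mathrm{Ind}_{\bold B\cap\bold M'(\Q_p)}^{\bold M'(\Q_p)}\bigl(s_p\beta_2+s_p\beta_5+s_p\beta_7\bigr),
\]
so that induction in stages gives
\[
\mathrm{Ind}_{\bold R(\Q_p)}^{E_7^{\mathrm{ad}}(\Q_p)}\tau_p\otimes\exp(\Lambda_0,H_R(\cdot))=\mathrm{Ind}_{\bold B(\Q_p)}^{E_7^{\mathrm{ad}}(\Q_p)}\chi,\qquad \chi=s_p\beta_2+s_p\beta_5+s_p\beta_7+\Lambda_0,
\]
matching the formula in the excerpt. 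The specific numerical coefficients of $\Lambda_0$ are designed so that $\chi-\rho_B$ coincides with the inducing character for the Borel parameter coming from $\bold P$.

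Next I would apply the same induction-in-stages to $\Pi_p=\mathrm{Ind}_{\bold P(\Q_p)}^{E_7^{\mathrm{ad}}(\Q_p)}|\nu(\cdot)|^{2s_p}$. Since $\bold P$ is the Siegel parabolic (attached to $\beta_7$) with Levi $GE_{6,2}$, inducing the one-dimensional character $|\nu(\cdot)|^{2s_p}$ first through the Borel of the Levi and then to $E_7^{\mathrm{ad}}$ presents $\Pi_p$ as a quotient of some $\mathrm{Ind}_{\bold B(\Q_p)}^{E_7^{\mathrm{ad}}(\Q_p)}\chi'$ for an explicit unramified character $\chi'$, computed exactly as in the proof of Theorem \ref{main-thm4} via the parameter $(8e_3+7e_4+6e_5+5e_6+4e_7+3e_8)$ shift. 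I would then verify directly that $\chi-\rho_B$ and $\chi'-\rho_B$ lie in the same Weyl orbit in $\mathfrak t^*$, which by the Bernstein--Zelevinsky / Langlands classification forces $\mathrm{Ind}_{\bold B}^{E_7^{\mathrm{ad}}}\chi$ and $\mathrm{Ind}_{\bold B}^{E_7^{\mathrm{ad}}}\chi'$ to have the same Jordan--H\"older constituents, and in particular to share the unique unramified quotient that lies in $\Pi_p$.

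The third step is to identify $\Pi_p$ with the unramified quotient of $\mathrm{Ind}_{\bold B}^{E_7^{\mathrm{ad}}}\chi$. Because $\Pi_p$ is the spherical subquotient attached to the Satake parameter computed in Section \ref{main-thm4-section}, it is determined by a single Weyl orbit, and the previous step shows that orbit equals the one cut out by $\chi$. Hence $\Pi_p$ occurs as a quotient of $\mathrm{Ind}_{\bold R(\Q_p)}^{E_7^{\mathrm{ad}}(\Q_p)}\tau_p\otimes\exp(\Lambda_0,H_R(\cdot))$ at almost every $p$, which is precisely the ``CAP in the generalized sense'' condition. I would finish by globalizing: the isobaric sum $\tau=\pi_f\boxtimes\pi_f\boxtimes\pi_f$ is an automorphic representation of $\bold M'(\A_\Q)$, and the Eisenstein series built from $\tau\otimes\exp(\Lambda_0,H_R(\cdot))$ provides a global representation nearly equivalent to $\Pi_F$ at every unramified place.

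The main obstacle I expect is the explicit bookkeeping of the root-data identification showing $\chi-\rho_B$ and $\chi'-\rho_B$ are Weyl conjugate; that is, verifying that the particular integer combination $10\beta_1+11\beta_2+19\beta_3+26\beta_4+22\beta_5+15\beta_6+8\beta_7$ is precisely what is needed to absorb the modulus of $\bold R$ and line up with $2s_p\varpi_7+\rho_{\bold P}$ (where $\varpi_7$ is the fundamental weight dual to $\beta_7$). Once that identity is in hand, everything else follows from general parabolic induction and standard unramified representation theory. The ramified places $p\mid N$ are handled by the same $\gamma$-factor cancellation argument that appeared in Section \ref{main-thm4-section}, so they do not pose an independent difficulty.
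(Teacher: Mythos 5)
Your proposal follows essentially the same route as the paper: induction in stages to identify $Ind_{\bold R(\Q_p)}^{E_7^{\rm ad}(\Q_p)}\tau_p\otimes exp(\Lambda_0,H_R(\,))$ with $Ind_{\bold B(\Q_p)}^{E_7^{\rm ad}(\Q_p)}\chi$ for $\chi=s_p\beta_2+s_p\beta_5+s_p\beta_7+\Lambda_0$, then matching this Borel parameter with the unramified parameter of $\Pi_p=Ind_{\bold P(\Q_p)}^{E_7^{\rm ad}(\Q_p)}|\nu(\cdot)|^{2s_p}$ up to the Weyl group to conclude that $\Pi_p$ is a quotient at almost all $p$, which is exactly the paper's argument. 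The one point to watch is that Weyl-conjugacy of the parameters only makes $\Pi_p$ a subquotient of $Ind_{\bold B}\chi$, and it is the particular (dominant) choice of $\Lambda_0$ that makes the spherical constituent a quotient; the paper asserts this step just as briefly, so your write-up is at the same level of detail and correctness.
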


\end{document}